\newtheorem{theorem}{Theorem}[section]
\newtheorem{lemma}[theorem]{Lemma}
\newtheorem{corollary}[theorem]{Corollary}
\newtheorem{claim}[theorem]{Claim}
\newtheorem{proposition}[theorem]{Proposition}
\newtheorem{example}[theorem]{Example}
\numberwithin{equation}{section}
\DeclareMathOperator{\SL}{SL}
\DeclareMathOperator{\SO}{SO}
\newcommand{\nil}{\text{Nil}}
\DeclareMathOperator{\GL}{GL}
\DeclareMathOperator{\id}{Id}
\newcommand{\V}{\mathcal{V}}
\newcommand{\R}{\mathbb{R}}
\newcommand{\Z}{\mathbb{Z}}
\newcommand{\E}{\mathbb{E}}
\newcommand{\PP}{\mathbb{P}}
\newcommand{\F}{\mathcal{F}}
\newcommand{\X}{\mathcal{X}}
\newcommand{\Y}{\mathcal{Y}}
\newcommand{\om}{\omega}
\newcommand{\s}{\sigma}
\newcommand{\de}{\delta}
\newcommand{\g}{\gamma}
\newcommand{\la}{\lambda}
\newcommand{\Om}{\Omega}
\newcommand{\tF}{\widetilde{\F}}
\newcommand{\tnu}{\widetilde{\nu}}
\newcommand{\tmu}{\widetilde{\mu}}
\newcommand{\tE}{\widetilde{E}}
\newcommand{\onefiner}{\overset{1}{\prec}}
\newcommand{\ov}{\overline}
\newcommand{\un}{\underline}
\begin{document}
\title{Exact dimension of dynamical stationary  measures}
\author{Fran\c cois Ledrappier and Pablo Lessa}
\address{Fran\c cois Ledrappier, Universit\'e de Paris et Sorbonne Universit\'e, CNRS, LPSM, Bo\^{i}te Courrier 158, 4, Place Jussieu, 75252 PARIS cedex
05, France,} \email{fledrapp@nd.edu}
\address{ Pablo Lessa,  IMERL, Facultad de Ingeniería, Julio Herrera y Reissig 565,  11300 Montevideo, Uruguay}  \email{plessa@fing.edu.uy}

\subjclass{37C45, 37A99, 28A80}\keywords{Furstenberg measure, dimension}
\thanks{FL was partially supported by IFUM; PL thanks CSIC research project 389}

 \maketitle
 \begin{abstract} We consider a random walk on \(\SL_d(\R) \) with finite first moment and countable support. We show that the distributions of the unstable flag space 
and of the stable flag space are exact dimensional. \end{abstract}

 \section{Introduction}
 
 \subsection{Exact dimension}
 
 We are given a probability measure \(\mu\) on the group \(\GL_d(\R)\) and we are interested in the asymptotic properties of products of sequences of matrices chosen independently with distribution \(\mu.\) The logarithm of the determinant follows a classical law of large numbers and we may restrict ourselves to probability measures on \(\SL_d(\R) \). 
 We assume that the probability \(\mu \) has a  finite first moment. Then, the behaviour of individual vectors under the random product of matrices is described by Oseledets theorem (\cite{oseledets}, see section \ref{section:oseledets}). Namely, there are numbers, the Lyapunov exponents \( \chi _1 > \ldots > \chi _N ,\) with multiplicities \( d_1, \ldots, d_N ,\) and, at almost every \(\om\),  a random decomposition \[ \R^d \; = \; E_1(\om) \oplus \ldots \oplus E_N(\om) \] with \( \dim E_i (\om ) = d_i\) for \( i = 1 \ldots, N\) such that \[ v \not = 0 \in E_i(\om) \; \iff \; \lim\limits _{n \to \pm \infty } \frac {1}{n} \log \| g^{(n)}(\om) v \| = \chi _i.\] The space \(\X\) of such decompositions is an open  subset of \( \Pi _{j=1}^N {\textrm {Gr}}_{d_i}(d) \) and is canonically endowed with a Borel structure and a Riemannian metric. From our results follows that the distribution of the Oseledets decomposition in \(\X\) is exact-dimensional. We have 
 \begin{theorem}\label{Mexact} Assume that the measure \(\mu\)  has countable support. Let \(M\) be the distribution on \(\X\) of the Oseledets decomposition: for any Borel \(A \subset \X, \; M(A) \) is   the probability that  \( \left(  E_1(\om),\ldots , E_N(\om) \right)\) belongs to \(A.\)  Then, there is a number \( \Delta \) such that, for \(M\)-a.e. \(x \in \X\):
 \[ \lim \limits_{r\to 0} \frac{\log M(B(x,r))}{\log r} \; = \; \Delta .\] \end{theorem}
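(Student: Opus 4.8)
The plan is to deduce Theorem~\ref{Mexact} from the exact dimensionality of the \(\mu\)-stationary measure on the unstable flag space and of the corresponding stationary measure on the stable flag space, i.e.\ from the main results of the paper. The first step is to record the (standard, from Oseledets theory) fact that the Oseledets spaces are recovered from the two one-sided flags: one has
\[ E_i(\om) \;=\; W^u_i(\om^+)\cap W^s_i(\om^-), \]
where \(W^u_\bullet(\om^+)\) is the decreasing unstable flag, depending only on the future \(\om^+=(\om_1,\om_2,\dots)\), and \(W^s_\bullet(\om^-)\) is the increasing stable flag, depending only on the past \(\om^-=(\om_0,\om_{-1},\dots)\), and that these two flags are in general position for a.e.\ \(\om\). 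Writing \(\nu_u\) for the law of \(W^u_\bullet(\om^+)\) on the unstable flag space and \(\nu_s\) for the law of \(W^s_\bullet(\om^-)\) on the stable flag space, the independence of the future and the past under \(\mu^{\otimes\Z}\) shows that the joint law of the pair \(\big(W^u_\bullet(\om^+),W^s_\bullet(\om^-)\big)\) is the product \(\nu_u\otimes\nu_s\).

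Next, the assignment \(\Psi\colon (W^u_\bullet,W^s_\bullet)\mapsto \big(W^u_1\cap W^s_1,\dots,W^u_N\cap W^s_N\big)\) is a diffeomorphism from the open set of pairs of flags in general position onto \(\X\), with inverse \((E_1,\dots,E_N)\mapsto\big(\bigoplus_{j\ge i}E_j,\ \bigoplus_{j\le i}E_j\big)_{i=1}^N\); in particular \(\Psi\) is locally bi-Lipschitz for the Riemannian metrics, and \(M=\Psi_*(\nu_u\otimes\nu_s)\). Since local dimension and exact dimensionality are local properties preserved (with the same value of the local dimension) under locally bi-Lipschitz maps, it suffices to prove that \(\nu_u\otimes\nu_s\) is exact dimensional. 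By the main results, \(\nu_u\) is exact dimensional of some dimension \(\Delta_u\) and \(\nu_s\) is exact dimensional of some dimension \(\Delta_s\) (both measures have countable support because \(\mu\) does). Working in product charts, in which the Riemannian metric is bi-Lipschitz to the maximum product metric, one has
\[ \nu_u(B(x,r))\,\nu_s(B(y,r))\;\le\;(\nu_u\otimes\nu_s)\big(B((x,y),r)\big)\;\le\;\nu_u(B(x,Cr))\,\nu_s(B(y,Cr)) \]
for a uniform constant \(C\); dividing by \(\log r\) and using Fubini to make the two relevant one-sided limits hold simultaneously at \((\nu_u\otimes\nu_s)\)-a.e.\ point, one concludes that \(\nu_u\otimes\nu_s\) is exact dimensional of dimension \(\Delta:=\Delta_u+\Delta_s\), which proves the theorem.

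The only genuinely substantial input is the exact dimensionality of \(\nu_u\) and \(\nu_s\) themselves, which is the content of the paper's main theorems; within the present reduction the work is essentially bookkeeping — verifying the transversality of the stable and unstable flags for a.e.\ \(\om\), checking that \(\Psi\) is a diffeomorphism onto the open subset \(\X\subset\prod_j\Gr_{d_i}(d)\), and confirming that the flag spaces arising above are precisely the partial flag varieties of type \((d_1,\dots,d_N)\) to which the main theorems apply. I expect this last point — matching the one-sided flag spaces to the objects treated by the main results — to be the only place where care is needed, since a priori the stationary measure lives on a partial flag variety and one must ensure the main theorems are stated at that level of generality rather than only for the full flag variety.
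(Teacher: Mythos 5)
Your proposal is correct and follows essentially the same route the paper takes (see the paragraph in the introduction immediately after the theorem statement): identify $\X$ bi-Lipschitz-locally with the product of the two one-sided flag spaces via intersection of flags in general position, observe that the two dynamical flags are respectively past- and future-measurable and hence independent so that $M$ is the push-forward of a product measure, invoke Theorem~\ref{exactdimensiontheorem} for $\mu$ and for $\mu'(g)=\mu(g^{-1})$ to get exact dimensionality of each factor, and conclude via the standard fact that a product of exact-dimensional measures is exact dimensional with additive dimension. Two cosmetic remarks: the paper's labels ``unstable''/``stable'' are reversed with respect to yours (the paper's unstable flag $E_1\oplus\cdots\oplus E_i$ is past-measurable), and in the product-ball estimate the lower bound should use radius $r/C$ rather than $r$; also, ``both measures have countable support because $\mu$ does'' should read that both $\mu$ and $\mu'$ have countable support, which is what Theorem~\ref{exactdimensiontheorem} actually requires — the stationary measures themselves are typically non-atomic.
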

 
 The space \(\X\) can be seen as a  space of pairs of  partial flags in general position in \(\R^d.\) With that setting,  the measure \(M\) is the  product of the natural dynamical stationary measure \( \nu \) on the space of unstable partial flags \(\F\) and the natural dynamical stationary measure \( \nu' \) on the space of stable partial flags \( \F'\). The space \( (\F, \nu )\) is the Furstenberg boundary of the random walk \( (\SL_d(\R), \mu).\) We show (theorem \ref{finiteentropytheorem}) that the measure \(\nu\) is stationary ergodic and that the Furstenberg entropy \(  \kappa(\mu,\nu) \) (see section \ref{section:entropy}) satisfies
  \begin{equation}\label{majorationentropy}\kappa(\mu,\nu)\; \le \; \sum\limits_{0<i<j\leq N}d_i d_j(\chi_i - \chi_j).\end{equation} 
  
  Assume that the measure \(\mu\)  has countable support. We show (theorem \ref{exactdimensiontheorem}) that the measure \(\nu \) is exact dimensional with dimension \(\de\). Since the measure \(\nu '\) is the dynamical stationary measure for the random walk \( (\SL_d(\R) , \mu' ),\) where \( \mu'(g):= \mu (g^{-1}),\) the measure \( \nu '\) is exact dimensional as well (with dimension \(\de' \)).
  Theorem \ref{Mexact} follows since the measure \(M\) is the product of the measures \(\nu \) and \( \nu'.\) Moreover, \(\Delta = \de + \de'.\) 
  
  \subsection{Structure of the flag space}
  
  Exact dimension holds for any coarser partial flag space, and theorem \ref{exactdimensiontheorem} is stated for a general partial flag space. The case of the projective space \(\PP^{d-1}\) is due to A. Rapaport who extended previous work for \(\SL_2(\R)\) and affine iterated function systems (see \cite{feng-hu}, \cite{hochman-solomyak}, \cite{barany-kaenmaki},  \cite{feng} and \cite{rapaport}). The dimension of \(\nu _{\PP^{d-1}} \) is given in \cite{rapaport} by a sum \( \sum _{j=1}^{N-1} \g _j ,\) where \( \g _j \) are partial dimensions. In the general case of the dynamical stationary measure on the unstable flag space, the dimension \( \de\) is  a sum of partial dimensions \( \g _{i,j}\), for \( 0<i <j \le N.\)
  Fix the multiplicities \( d_1, \ldots, d_N \) such that \( d_1 + \ldots + d_N =d \) and consider the corresponding spaces \(\X, \F \) and \( \F'.\) The main novelty in this paper is the description of the bundle \( \X \to \F' \) as an array  of \(\SL_d(\R) \)-equivariant algebraic finite-dimensional vector bundle extensions (see theorems  \ref{vectorbundletheorem} and \ref{changeofcoordinatestheorem}). To each vector extension is associated a pair \( (i_k,j_k), 0<i_k<j_k \le N\) and the linear dimension of the fiber space is \( d_{i_k}d_{j_k}.\)

  Given \( \mu \) on \( \SL_d(\R ) \) with finite first moment and multiplicities  \( d_1, \ldots, d_N ,\) for each vector extension, there exists a family of conditional measures on the fibers associated to \(M\). These conditional measures are \(M\)-almost everywhere  defined. They are \(M\)-a.e. exact dimensional (theorem \ref{onesteptheorem}) and the  dimension \(\g_k\) 
 is \(M\)-a.e. constant.  We have \begin{equation}\label{majorationdimension} \g _k \; \le \;  d_{i_k}d_{j_k}. \end{equation} 
 Moreover, for each vector  extension, there is a dynamically defined entropy \( \kappa _k \) that satisfies \( \kappa _k = \g_k (\chi _{i_k} -\chi _{j_k})\)   (theorem \ref{onesteptheorem}).
   In \cite{ll1}, we assume that the multiplicities \(d_i\) are all $1$, so that the bundle structure is simpler to describe and the proof of theorem \ref{onesteptheorem} is more direct.
   
 \subsection{Addition formulas}
 The structure of the array of extensions is not linear (see Figures 1 and 2 in \cite{ll1} for the cases \(N= 3\) and \(N=4\)) but there are paths \( k_1, \ldots , k_{N(N-1)/2} \) of successive extensions from \(\F'\) to  \( \X \). Moreover, the associated pairs \( i_{k_\ell}, j_{k_\ell} \) are such that all pairs \( (i,j), \, 0< i <j \le N \) appear and appear once. Along any such path, we have the entropy formula (see section \ref{entropy}): \begin{equation}\label{entropyadd} \kappa (\mu, \nu ) \;= \; \sum _{\ell = 1}^{N(N-1)/2} \kappa _{k_\ell} \;= \; \sum _{\ell = 1}^{N(N-1)/2} \g _{k_\ell} (\chi _{i_{k_\ell}} -\chi _{j_{k_\ell}}) . \end{equation}
 
 We obtain the exact dimension of the measure \(\nu \) in theorem \ref{exactdimensiontheorem} by adding the dimensions along a path of successive extensions from \(\F'\) to  \(M \) under two more conditions:
 \begin{enumerate}
 \item we consider a {\it{monotone }} path, such that  \( \ell \mapsto  \chi _{i_{k_\ell}} - \chi _{ j_{k_\ell}} \) is non-decreasing;
 \item we assume that the measure \(\mu \)  has countable support.
  \end{enumerate}
  Under those two conditions, the dimensions add (see theorem \ref{additivitytheorem}):
  \begin{equation}\label{dimensionadd}  \de  \;= \; \sum _{\ell = 1}^{N(N-1)/2} \g _{k_\ell} .\end{equation}
  
  The first condition is necessary: there are counter-examples already in \(\SL_3(\R)\), see \cite{ll1}, section 9. It is an open question whether the second condition can be weakened. 
 The conditions for the formula  (\ref{dimensionadd}) is the main difference with \cite{feng} and \cite{rapaport}, where they consider the action of the projective space. In that case, there is only one path of successive extensions, and it is automatically monotone. We also replace the exponential moment condition with finite first moment and countable support.

 \subsection{Lyapunov dimension}
 
 We continue assuming that  the measure \(\mu \)  has countable support and a first moment. Following \cite{kaplan-yorke} and \cite{douady-oesterle}, we define the Lyapunov dimension of a stationary measure on \(\F\) and show the relation (\ref{Lyadim}). 
 
  \
  
 We order the differences \( \la _k = \chi _{i_k} - \chi _{j_k} \) in such a way that \[0<  \la _1 \le \la _2 \le \ldots \le \la _{N(N-1)/2} .\] 
 Define the continuous, piecewise affine  function \( D_{\F ,\mu ,\nu} \) on the interval \( [ 0,\dim \F]\)
 as:
\begin{eqnarray*} D_{\F, \mu ,\nu } (0) &=& \kappa( \mu, \nu) \;{\textrm { and }}\\  D'_{\F, \mu ,\nu} (s) &=& -\la_\ell \; {\textrm {for}} \;\sum _{k < \ell }d_{i_k}d_{j_k} <s <\sum _{k \le \ell }d_{i_k}d_{j_k}, \; \ell = 1, \ldots, N(N-1)/2. \end{eqnarray*}
We define the  {\it {Lyapunov dimension}} \( \dim_{{\textrm{LY}}}(\F, \mu ,\nu )\) as the number  
 such that \[ D_{\F,\mu,\nu} ( \dim_{{\textrm{LY}}}(\F, \mu ,\nu ) ) = 0.\] 
 By (\ref{majorationentropy}), the definition makes sense: the function \(D_{\F,\mu, \nu} \) is decreasing and  \[ D_{\F, \mu,\nu } (\dim \F) = \kappa (\mu, \nu ) - \sum\limits_{0<i<j\leq N}d_i d_j(\chi_i - \chi_j) \le 0.\]
 \begin{theorem} Assume that  the measure \(\mu \) on \(\SL_d(\R) \)  has countable support and a first moment. Let \(\nu \) be the dynamical stationary measure on the space of unstable flags. With the above notations
 \begin{equation}\label{Lyadim} \de = \dim \nu \; \le  \; \dim_{{\textrm{LY}}}(\F, \mu ,\nu  ) .\end{equation} \end{theorem}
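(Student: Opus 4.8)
The equality $\de=\dim\nu$ is Theorem~\ref{exactdimensiontheorem} (with the value of $\de$ furnished by \eqref{dimensionadd}), so the content to prove is the inequality in \eqref{Lyadim}. The plan has two parts: (i) recognize $\dim_{{\textrm{LY}}}(\F,\mu,\nu)$ as the value of a greedy extremal problem, and (ii) check that the genuine partial dimensions along a monotone path form an admissible vector for it with coordinate sum $\de$. For (i): set $L=N(N-1)/2$ and let $0<\la_1\le\dots\le\la_L$ and $m_\ell=d_{i_\ell}d_{j_\ell}$ be the ordered differences and their weights used in defining $D_{\F,\mu,\nu}$. I claim that $\dim_{{\textrm{LY}}}(\F,\mu,\nu)$ equals the maximum of $\sum_\ell a_\ell$ over all vectors $(a_\ell)_{\ell=1}^{L}$ with $0\le a_\ell\le m_\ell$ and $\sum_\ell a_\ell\la_\ell=\kappa(\mu,\nu)$, the maximum being attained by the greedy allocation that saturates the small-$\la$ coordinates first --- which is exactly the recipe behind $D_{\F,\mu,\nu}$.

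For (ii), fix a monotone path of successive extensions $k_1,\dots,k_L$ from $\F'$ to $\X$, so that $\ell\mapsto\la_{k_\ell}:=\chi_{i_{k_\ell}}-\chi_{j_{k_\ell}}$ is non-decreasing. Since each pair $(i,j)$, $0<i<j\le N$, occurs exactly once along the path, the sequence $(\la_{k_\ell})_\ell$ is precisely the ordering $\la_1\le\dots\le\la_L$, and --- up to permutations within blocks of equal $\la$-value, which play no role below --- the weights $d_{i_{k_\ell}}d_{j_{k_\ell}}$ are the $m_\ell$. Setting $a_\ell:=\g_{k_\ell}$, the earlier theorems give $0\le a_\ell\le m_\ell$ by \eqref{majorationdimension}, $\sum_\ell a_\ell=\de$ by \eqref{dimensionadd}, and $\sum_\ell a_\ell\la_\ell=\kappa(\mu,\nu)=D_{\F,\mu,\nu}(0)$ by the entropy addition formula \eqref{entropyadd}. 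Thus $(a_\ell)$ is admissible with coordinate sum $\de$, and (i) yields $\de\le\dim_{{\textrm{LY}}}(\F,\mu,\nu)$.

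It remains to prove (i). Let $\la(\cdot)$ be the step function on $[0,\dim\F]$ equal to $\la_\ell$ on $(M_{\ell-1},M_\ell)$, where $M_\ell:=m_1+\dots+m_\ell$, so that $D_{\F,\mu,\nu}(s)=D_{\F,\mu,\nu}(0)-\int_0^s\la(t)\,dt$; since $\la_\ell>0$ for all $\ell$ (as $\chi_i>\chi_j$ for $i<j$) this primitive is strictly increasing, and $s^*:=\dim_{{\textrm{LY}}}(\F,\mu,\nu)$ is the unique $s$ with $\int_0^s\la(t)\,dt=D_{\F,\mu,\nu}(0)$. Given an admissible $(a_\ell)$, let $\de:=\sum_\ell a_\ell\le M_L=\dim\F$ and let $(b_\ell)$ be the greedy allocation of total mass $\de$; then $\int_0^{\de}\la(t)\,dt=\sum_\ell b_\ell\la_\ell$ and the partial sums satisfy $A_\ell:=\sum_{k\le\ell}a_k\le\min(M_\ell,\de)=\sum_{k\le\ell}b_k=:B_\ell$ with $A_L=B_L=\de$. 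An Abel summation then gives
\[
D_{\F,\mu,\nu}(0)-\int_0^{\de}\la(t)\,dt=\sum_{\ell=1}^{L}(a_\ell-b_\ell)\la_\ell=-\sum_{\ell=1}^{L-1}(A_\ell-B_\ell)(\la_{\ell+1}-\la_\ell)\ \ge\ 0,
\]
since $A_\ell-B_\ell\le0$ and $\la_{\ell+1}\ge\la_\ell$; hence $\int_0^{\de}\la(t)\,dt\le D_{\F,\mu,\nu}(0)=\int_0^{s^*}\la(t)\,dt$, so $\de\le s^*$ by strict monotonicity of the primitive, with equality for $(b_\ell)$. Because exact dimensionality, the value of $\de$, and the addition formulas are all supplied by the earlier theorems, this argument is essentially bookkeeping; the only points deserving care are the identification of the monotone path's data with the data defining $D_{\F,\mu,\nu}$ (the ambiguity from ties among the $\la_\ell$ being harmless) and the degenerate case $D_{\F,\mu,\nu}(0)=0$, where $\sum_\ell a_\ell\la_\ell=0$ with all $\la_\ell>0$ forces $\de=0=s^*$.
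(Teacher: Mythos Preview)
Your proof is correct and follows essentially the same approach as the paper: both use \eqref{majorationdimension}, \eqref{entropyadd}, and \eqref{dimensionadd} together with the monotone ordering of the $\la_\ell$ to show $D_{\F,\mu,\nu}(\de)\ge 0$, hence $\de\le\dim_{\mathrm{LY}}(\F,\mu,\nu)$. Your Abel-summation argument recasting $\dim_{\mathrm{LY}}$ as the optimum of a constrained linear program is a slightly more elaborate packaging of the paper's direct one-line slope comparison (the slope of $D_{\F,\mu,\nu}$ on $[\sum_{k<\ell}\g_k,\sum_{k\le\ell}\g_k]$ is at least $-\la_\ell$ since $\sum_{k\le\ell}\g_k\le M_\ell$), but the underlying inequality is identical.
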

\begin{proof} By  (\ref{majorationdimension}), \(  \g _k \; \le \;  d_{i_k}d_{j_k}\). Since we are following  a monotone path, we can apply (\ref{dimensionadd}) and write, since the slope of  the function \( D_{\F, \mu ,\nu} \) is at least \(\chi_{j_\ell} - \chi_{i_\ell}\) for \( \sum _{k < \ell }\g _k <s <\sum _{k \le \ell }\g _k, \)
 \[ D_{\F, \mu ,\nu } (\de ) = D_{\F, \mu ,\nu } (\sum _\ell \g _\ell) \geq \kappa (\mu, \nu ) - \sum\limits_{k}\g _\ell(\chi_{i_\ell} - \chi_{j_\ell}) .\]
By (\ref{entropyadd}), we have \( D_{\F,\mu ,\nu} (\de) \ge 0 .\) \end{proof}

From the proof, we see that if we have equality in (\ref{Lyadim}), then the \( \g _\ell \) are known in terms of the entropy \( \kappa (\mu, \nu ) \): there is some \(k_0\) such that 
\begin{itemize}
\item \( \g _\ell \;=\; d_{i_\ell} d_{j_\ell } \) for \( \ell < k_0 ,\) 
\item \( \g _\ell \;=\; 0\) for \( \ell > k_0 \)  and 
\item \( \g_{k_0} \) is such that  (\ref{entropyadd}) holds.
\end{itemize}
In other words,  we have equality in (\ref{Lyadim}) if the measure \(\nu \) aligns along the directions with the less contraction, the only constraint being the Furstenberg entropy. An open problem is to find conditions under which we have equality in (\ref{Lyadim}). This is true when \(d=2\) and in some examples in higher dimensions, see  \cite{feng-simon} for the latest results and a history of similar problems, in particular for IFS.

 Define the function \(\ov D_{\mu } \) by replacing \( \kappa (\mu, \nu ) \) by the {\it {random walk entropy}} \(h_{\textrm {RW}} (\mu ) := \inf _n \frac{1}{n} H(\mu ^{\ast n} ).\) The function \(\ov D_{\mu } \) does not depend anymore on \(\F , \nu\), but there is no guarantee that   \(\ov D_{\mu }(\dim \F ) \le 0. \) So, for any \(C^1 \; \SL_d(\R) \)-space \(\Y\), we can define \( \ov \dim _\mu (\Y) \) by 
 \begin{eqnarray*} \ov \dim _\mu (\Y) &:= & \dim \Y  {\textrm{ if }} \ov D_\mu (\dim \Y ) \ge 0,\\
    &:= & s {\textrm{ such that  }} \ov D_\mu (s ) = 0 \; {\textrm { otherwise .}}
    \end{eqnarray*}
    
    Since for any stationary measure \(\nu\) on any \(\SL_d(\R)\)-space, \(\kappa (\mu, \nu ) \le h_{\textrm {RW}} (\mu ) \)     (\cite{kaimanovich-vershik}), we have  \( \dim_{{\textrm{LY}}}(\F, \mu ,\nu )\le \ov \dim_\mu (\F )\) and the equality \( \de = \ov \dim _\mu (\F) \) is harder to obtain than equality in (\ref{Lyadim}). See \cite{hochman-solomyak} for the \( d=2 \) case and \cite{rapaport2} and the references therein for the latest developments.
    
    \subsection{Organization of the paper}
    
 In section \ref{results}, we fix the notations and state the precise and complete form of our results.   The proof of theorem  \ref{finiteentropytheorem} is given in section  \ref{theorem:finiteentropytheorem}. We describe the vector bundle structure of \( \X \to \F'\) and the partial vector bundles in section \ref{bundlestructure}, and their algebraic change of coordinates in section \ref{algebraic}. We then describe some  dynamical properties of these bundles in section \ref{lineardynamics}. In section \ref{entropy}, we recall the properties of the partial entropies associated to all the equivariant spaces we constructed. We prove theorem \ref{onesteptheorem} in section \ref{onestep} and theorem \ref{additivitytheorem} in section \ref{final}.

 \section{Statement of results}\label{results}
 
 We fix \(G = \SL_d(\R)\) and a probability \(\mu\) on \(G\).

 We always assume in what follows that \(\mu\) has finite first moment, meaning that \(\int\limits_{G} \log \|g\| d\mu(g) < +\infty\).

 We say \(\mu\) has countable support if it is a countable convex combination of point masses.

 We consider on the sequence space \(\Omega = G^{\Z}\), the Bernoulli measure \(m = \mu^\Z\), we let \(g_n: \Omega \to G\) be the \(n\)-th coordinate projection, and \(\sigma:\Omega \to \Omega\) be the left shift so that \(g_n\circ \sigma = g_{n+1}\).
 We will also use the notation
 \[g_{m}^n(\omega) = g_{n-1}(\omega)\cdots g_{m}(\omega),\]
 for all \(m < n\).
 
 \subsection{Oseledets splitting}\label{section:oseledets}

By the Oseledets theorem there exists \(N \in \lbrace 1,\ldots, d\rbrace\), multiplicities \(d_1,\ldots,d_{N}\) with \(d_1+\cdots +d_{N} = d\),  and Lyapunov exponents \(\chi_1 > \cdots > \chi_{N}\) such that for \(m\)-a.e. \(\omega\) the set
 \begin{align*}
   E_i(\omega) &= \left\lbrace v \in \R^d: \lim\limits_{n \to +\infty}\frac{1}{n}\log \|g_{0}^{n}(\omega)v\| \le \chi_i\right\rbrace 
            \\ &\cap \left\lbrace v \in \R^d: \lim\limits_{n \to +\infty} \frac{1}{n}\log \|g_{-n}^{0}(\omega)^{-1}v\| \le -\chi_i \right\rbrace,
 \end{align*}
is a \(d_i\)-dimensional subspace of \(\R^d\) for \(i = 1,\ldots,N\). Moreover, for any pair \( I,J\) of disjoint non-empty subsets of \( \{1, \ldots , N\}\)
\begin{equation}\label{Oseledets} \lim\limits _{n \to \pm \infty } \frac {1}{|n|} \log |\sin \angle ( \oplus _{i \in I} E_i (\s^n \om) ,  \oplus _{j \in J} E_j(\s^n \om) )| \; =\; 0,\end{equation}
where \( \angle (E,E') \) is the smallest angle between unit vectors of the vector spaces \(E\) and \(E'\).

\subsection{Entropy and dimension on flag spaces}

\subsubsection{Left filtrations}

For any \(I \subset \lbrace 1,\ldots, N\rbrace\), set \(d(I) = \sum\limits_{i \in I}d_i\). We will call a set 
\[L \subset \lbrace \emptyset, \lbrace 1\rbrace,\lbrace 1,2\rbrace, \ldots, \lbrace 1,\ldots, N\rbrace\rbrace,\]
containing \(\emptyset\) and \(\lbrace 1,\ldots,N\rbrace\) a {\it{ left filtration.}}

For each \(i \in \lbrace 1,\ldots, N\rbrace\) we denote by \(L(i)\) its atom in \(L\), i.e. the smallest element of \(L\) containing  \(i\).

\subsubsection{Flag spaces}

Recall that a flag in \(\R^d\) is a sequence of increasing subspaces beginning with the subspace \(\lbrace 0\rbrace\) and ending in the subspace \(\R^d\).

Given a left filtration \(L\) let \(\F_L\) be the space of partial flags given as sequences \((x_I)_{I \in L}\) such that 
\begin{enumerate}
  \item \(x_I\) is a \(d(I)\)-dimensional subspace of \(\R^d\) for each \(I \in L\) and,
  \item \(x_I \subset x_J\) whenever \(I \subset J\).
\end{enumerate}

Each flag space \(\F_L\) is endowed with a distance coming from its embedding into the product of Grasmannian manifolds of the corresponding dimensions.

\subsubsection{Dynamical stationary measures}

The partial unstable flag \(E_L(\omega) \in \F_L\) is defined by
\[E_L(\omega)_I = \bigoplus\limits_{i \in I}E_i(\omega),\]
for each \(I \in L\).

We let \(\nu_{L}\) denote the distribution of \(E_L\) i.e. the probability on \(\F_L\) defined by
\[\nu_{L}(A) = m\left(\left\lbrace \omega \in \Omega: E_L(\omega) \in A\right\rbrace \right).\]

\subsubsection{Furstenberg entropy}\label{section:entropy}

Recall that a probability \(\nu\) defined on a space where \(G\) acts continuously is said to by \(\mu\)-stationary if
\[\nu = \int\limits_{G} g_*\nu d\mu(g).\]

A \(\mu\)-stationary probability is ergodic if it is extremal among \(\mu\)-stationary measures.
Given a \(\mu\)-stationary measure \(\nu\) the Furstenberg entropy of the pair \((\mu,\nu)\) is defined as
\[\kappa(\mu,\nu) = \int\limits_{G}\int\limits_{F} \log \frac{dg_*\nu}{d\nu}(gx)d\nu(x) d\mu(g),\]
or \(+\infty\) if \(g_*\nu\) is not absolutely continuous with respect to \(\nu\) for \(\mu\)-a.e. \(g\).

Furstenberg entropy is always non-negative, and is equal to zero if and only if \(g_*\nu = \nu\) for \(\mu\)-a.e. \(g \in G\).  
Since \(\mu\) has finite first moment,  theorem \ref{finiteentropytheorem} implies that   the Furstenberg entropy of the dynamical  stationary measure on a flag space is finite.

\begin{theorem}[Entropy exponent inequality]\label{finiteentropytheorem}
 For each left filtration \(L\), the measure \(\nu_{L}\) is \(\mu\)-stationary, ergodic,  and satisfies
 \[\kappa(\mu,\nu_{L}) \le \sum\limits_{j \notin L(i)}d_i d_j(\chi_i - \chi_j).\] 
\end{theorem}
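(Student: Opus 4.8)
The plan is to get the stationarity and ergodicity of $\nu_L$ essentially for free from the dynamical description of $E_L$, and to concentrate the work on the entropy bound.

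\emph{Soft part.} I would first record the two structural facts that do all the work here: (i) $E_L\circ\sigma=g_0\cdot E_L$ for $m$-a.e.\ $\omega$ (the usual cocycle equivariance of the Oseledets flags, read off from the conventions fixed above), and (ii) $E_L$ is measurable with respect to $\sigma(g_n:n<0)$, because for every nontrivial $I=\{1,\dots,k\}\in L$ one has $E_L(\omega)_I=\bigoplus_{i\le k}E_i(\omega)=\{v:\lim_n\tfrac1n\log\|g_{-n}^0(\omega)^{-1}v\|\le-\chi_k\}$, an expression involving only the negative coordinates. From (ii), $g_0$ and $E_L$ are $m$-independent, so using (i) and $\sigma$-invariance of $m$, $\nu_L=(E_L)_*m=(E_L\circ\sigma)_*m=(g_0\cdot E_L)_*m=\int_G g_*\nu_L\,d\mu(g)$, which is stationarity. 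For ergodicity, the map $\Phi(\omega)=((g_n(\omega))_{n\ge0},E_L(\omega))$ pushes $m$ to $m^+\otimes\nu_L$ on $G^{\N}\times\F_L$ (independence again) and intertwines $\sigma$ with the skew product $T(\eta,x)=(\sigma\eta,\eta_0x)$; since $(\Omega,m,\sigma)$ is a Bernoulli shift, hence ergodic, $(m^+\otimes\nu_L,T)$ is ergodic, and since a non-ergodic $\mu$-stationary measure would produce a non-ergodic $T$, $\nu_L$ is $\mu$-ergodic.

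\emph{Reduction of the entropy bound.} Write $\kappa(\mu,\nu_L)=\int_G D(g_*\nu_L\,\|\,\nu_L)\,d\mu(g)$ with $D$ the relative entropy; the argument will show in particular that $g_*\nu_L\ll\nu_L$ for $\mu$-a.e.\ $g$, so that this integral formula makes sense. The cocycle relation for Radon–Nikodym derivatives together with the stationarity of $\nu_L$ gives the recursion $a_{n+1}=a_n+\kappa(\mu,\nu_L)$ for $a_n:=\int_G D(h_*\nu_L\,\|\,\nu_L)\,d\mu^{*n}(h)$, hence $\kappa(\mu,\nu_L)=\tfrac1n\int_G D(h_*\nu_L\,\|\,\nu_L)\,d\mu^{*n}(h)$ for every $n$. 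So it suffices to bound $D(h_*\nu_L\,\|\,\nu_L)$ for a $\mu^{*n}$-typical $h$ by a quantity whose $\tfrac1n$-average converges to $\sum_{j\notin L(i)}d_id_j(\chi_i-\chi_j)$, and then let $n\to\infty$.

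\emph{Geometric estimate.} For a $\mu^{*n}$-typical $h$ — a product of $n$ i.i.d.\ factors — the Oseledets theorem makes its singular values cluster into $N$ blocks of sizes $d_1,\dots,d_N$ (gaps at $d_1,d_1+d_2,\dots$, since the $\chi_i$ are distinct), and makes $h$ act on $\F_L=G/P_L$ with a contracting fixed flag exponentially close to the value of $E_L$ along the corresponding trajectory, the error being controlled by (\ref{Oseledets}). On the tangent space of $\F_L$ at that flag, which splits as $\bigoplus_{j\notin L(i)}\operatorname{Hom}(E_i,E_j)$, the differential of $h^{-1}$ dilates the $(i,j)$-summand by the block singular value ratio $\mu_i(h)/\mu_j(h)$; hence the logarithm of the maximal distortion of $h$ on $\F_L$ is at most $\Sigma_L(h):=\sum_{j\notin L(i)}d_id_j(\log\mu_i(h)-\log\mu_j(h))$, up to an $m$-a.e.\ $o(n)$ error. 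Granting the comparison $D(h_*\nu_L\,\|\,\nu_L)\le\Sigma_L(h)+r_n(h)$ with $\tfrac1n\int r_n\,d\mu^{*n}\to0$, and using $\tfrac1n\int\log\|\Lambda^{d_1+\dots+d_\ell}h\|\,d\mu^{*n}(h)\to d_1\chi_1+\dots+d_\ell\chi_\ell$ (Furstenberg–Kesten and Kingman — this is where the finite first moment enters, together with $\|h^{-1}\|\le\|h\|^{d-1}$ in $G$), one obtains $\tfrac1n\int\Sigma_L(h)\,d\mu^{*n}\to\sum_{j\notin L(i)}d_id_j(\chi_i-\chi_j)$, and the theorem follows.

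\emph{The obstacle.} The real content is the comparison $D(h_*\nu_L\,\|\,\nu_L)\le\Sigma_L(h)+o(n)$. Since $D(h_*\nu_L\,\|\,\nu_L)=\int\log\tfrac{dh_*\nu_L}{d\nu_L}\,d(h_*\nu_L)$ and $\tfrac{dh_*\nu_L}{d\nu_L}(y)=\lim_{r\to0}\nu_L(h^{-1}B(y,r))/\nu_L(B(y,r))$, and since for $y$ near the contracting flag $h^{-1}B(y,r)$ is an anisotropic neighbourhood of $h^{-1}y$ with semi-axes $r\,\mu_i(h)/\mu_j(h)$ in the $(i,j)$ direction, one must show that the $\nu_L$-measure of such a neighbourhood is at most the product of the axis ratios times the $\nu_L$-measure of a comparable ball. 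This is simply false for general measures — point masses, for instance, for which $\kappa=+\infty$ — and works for $\nu_L$ only because, as a $\mu$-boundary, it charges no proper algebraic subvariety of $\F_L$ and is genuinely transverse at $\nu_L$-a.e.\ point. Making this precise, direction by direction and uniformly over typical $h$ as $r\to0$ — which also delivers the $\mu$-a.e.\ absolute continuity of $g_*\nu_L$ — is the technical heart; it is likewise the reason one cannot settle for a one-step bound on $D(g_*\nu_L\,\|\,\nu_L)$, which under finite first moment alone is too crude, and must pass through the averaged $n$-step identity.
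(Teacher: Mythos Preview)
Your soft part is correct; the stationarity argument is the same as the paper's, and your skew-product route to ergodicity is a legitimate variant of the paper's argument (which instead shows that the Furstenberg limit measures \(\nu_{0,\omega}\) are point masses \(\delta_{E_L(\omega)}\), hence admit no convex decomposition).

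For the entropy inequality, however, your approach diverges from the paper's and carries a real gap that you flag but do not close. The paper does \emph{not} attempt to bound \(D(h_*\nu_L\,\|\,\nu_L)\) by metric distortion of \(h\). Instead it perturbs: convolve \(\mu\) with the uniform measure on a ball of radius \(1/n\) in \(\SO(d)\) to get \(\tilde\mu_n\). Each \(\tilde\mu_n\) has a unique stationary measure \(\tilde\nu_n\) on the full flag space which is absolutely continuous with respect to the rotationally invariant measure, and for such measures one has the \emph{exact} formula \(\kappa(\tilde\mu_n,(\pi_L)_*\tilde\nu_n)=\sum_{j\notin L(i)}\sum_{k,l}(\lambda_k-\lambda_l)\) in terms of the Lyapunov exponents of \((\tilde\mu_n,\tilde\nu_n)\). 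One then lets \(n\to\infty\): the exponents converge to the \(\chi_i\) (a stochastic-stability result of Froyland--Gonz\'alez-Tokman--Quas), any subsequential limit \(\tilde\nu\) of \(\tilde\nu_n\) is \(\mu\)-stationary with the correct exponents and hence pushes down to \(\nu_L\) (this is the content of the paper's Lemma~\ref{stationaryliftlemma}), and lower semicontinuity of Furstenberg entropy under weak convergence turns the equality into the desired inequality.

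Your route, by contrast, needs \(D(h_*\nu_L\,\|\,\nu_L)\le\Sigma_L(h)+o(n)\) for \(\mu^{*n}\)-typical \(h\). Absolute continuity \(h_*\nu_L\ll\nu_L\) is indeed free from stationarity, but the density \(\tfrac{dh_*\nu_L}{d\nu_L}\) of a \emph{singular} measure has no a priori relation to the Jacobian of \(h\); your heuristic rewrites it as a limit of ratios \(\nu_L(h^{-1}B(y,r))/\nu_L(B(y,r))\) where \(h^{-1}B(y,r)\) is an anisotropic ellipsoid, and asks that the \(\nu_L\)-measure of such an ellipsoid be controlled by the product of its axis ratios times the measure of a comparable ball. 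That is precisely an exact-dimension / Ledrappier--Young type statement for \(\nu_L\) along the Oseledets directions --- exactly what Theorems~\ref{onesteptheorem} and~\ref{additivitytheorem} of the paper establish \emph{later}, with substantial work and under the additional hypothesis of countable support. Invoking that \(\nu_L\) charges no subvariety is far weaker than what you need here and does not deliver the bound. So as written the argument is either incomplete or circular; the perturbation method avoids the issue entirely by working first in the absolutely continuous regime where Jacobians and Radon--Nikodym derivatives coincide.
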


\subsubsection{Exact dimension}

Let $(X,\rho) $ be a metric space, \( \nu \) a  measure on \(X.\) The {\it {lower dimension }} \( \un \dim \) and the {\it {upper dimension}} \( \ov \dim \) of \((X, \rho, \nu )\) are defined by 
\[ \un \dim \, = \, \underset {\nu}{{\textrm {ess.inf}}} \, \liminf _{r\to 0} \frac{\log \nu (B(x,r))}{\log r} , \quad  \ov \dim \, = \, \underset {\nu}{{\textrm {ess.sup}}} \,\limsup_{r\to 0} \frac{\log \nu (B(x,r))}{\log r}.\]
A measure $\nu $ on $X$ is called {\it {exact dimensional with dimension }}$\delta$ if \(\un \dim = \ov \dim = \delta  .\)

\begin{theorem}[Exact dimension of dynamical stationary measures]\label{exactdimensiontheorem}
Asssume that \( \mu \) has countable support.

Then, for each left filtration \(L\) the measure \(\nu_{L}\) is exact dimensional.  Furthermore there is a formula for its dimension in terms of the Lyapunov exponents of \(\mu\) and a finite set of dynamically defined entropies. 
\end{theorem}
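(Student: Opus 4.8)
The plan is to reduce the statement to the one-step result (theorem \ref{onesteptheorem}) and the additivity result (theorem \ref{additivitytheorem}) advertised in the introduction, via the array of vector bundle extensions from $\F'$ to $\X$. First I would fix the left filtration $L$ and recall from theorems \ref{vectorbundletheorem} and \ref{changeofcoordinatestheorem} the description of the relevant bundle tower: each flag space $\F_L$ (and more generally each intermediate space in the tower over $\F'$) is obtained from a coarser one by a sequence of $G$-equivariant algebraic finite-dimensional vector bundle extensions, the $k$-th extension having fiber of linear dimension $d_{i_k}d_{j_k}$ for a pair $0<i_k<j_k\le N$. I would then choose a \emph{monotone} path $k_1,\ldots,k_{N(N-1)/2}$ of successive extensions realizing $\F_L$ (restricting attention to the sub-tower that lands in $\F_L$ rather than all of $\X$), which exists because we may always order the extensions so that $\ell\mapsto \chi_{i_{k_\ell}}-\chi_{j_{k_\ell}}$ is non-decreasing; monotonicity is exactly the hypothesis (1) of the introduction, and countable support is hypothesis (2), which we have assumed here.

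Next I would invoke theorem \ref{onesteptheorem}: for each extension $k$ in the path, the conditional measures of $M$ (equivalently of $\nu_L$, after projecting away the stable part) on the fibers are $M$-a.e. exact dimensional with $M$-a.e. constant dimension $\g_k$, and there is an associated dynamical entropy $\kappa_k$ with $\kappa_k=\g_k(\chi_{i_k}-\chi_{j_k})$; in particular $\g_k\le d_{i_k}d_{j_k}$ by (\ref{majorationdimension}), and the total entropy $\kappa(\mu,\nu_L)$ decomposes along the path as in (\ref{entropyadd}). Then I would apply theorem \ref{additivitytheorem}, whose two hypotheses (monotone path, countable support) are met: along the chosen path the conditional dimensions add, so that $\nu_L$ is exact dimensional with $\dim\nu_L=\sum_{\ell=1}^{}\g_{k_\ell}$ as in (\ref{dimensionadd}). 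This both proves exact dimensionality and exhibits the claimed formula: $\de$ is the finite sum of the dynamically defined partial dimensions $\g_k$, each of which equals $\kappa_k/(\chi_{i_k}-\chi_{j_k})$, hence is expressed through the Lyapunov exponents and the finitely many entropies $\kappa_k$.

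There are two routine points to check. One is the passage from $M$ (on $\X=\F\times\F'$) to $\nu_L$ (on $\F_L$): since $\nu_L$ is the pushforward of $\nu$ under the coarsening $\F\to\F_L$, and exact dimension of a measure is compatible with the relevant equivariant Lipschitz projections between flag spaces — the fibers being smooth submanifolds with uniformly controlled geometry — one deduces exact dimensionality of $\nu_L$ from that of the full unstable measure, or alternatively runs the same tower argument directly over the sub-array terminating at $\F_L$. The other is that the metric on $\F_L$ coming from the Grassmannian embedding is bi-Lipschitz, on compact pieces, to the metric induced by the bundle tower coordinates, so the ball asymptotics in the definition of exact dimension are insensitive to the change of coordinates of theorem \ref{changeofcoordinatestheorem}; this uses (\ref{Oseledets}) to control angles along the orbit. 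The main obstacle is none of the above bookkeeping but rather the inputs themselves — theorem \ref{onesteptheorem} (exact dimension on a single algebraic vector extension, whose proof occupies section \ref{onestep}) and theorem \ref{additivitytheorem} (section \ref{final}), the latter being precisely where the monotone-path and countable-support hypotheses are used to prevent the kind of dimension drop that the $\SL_3(\R)$ counterexamples of \cite{ll1} display for non-monotone paths.
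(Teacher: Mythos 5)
Your overall strategy is exactly the paper's: pick a monotone path of one-step extensions (using lemma \ref{monotone}), apply theorem \ref{onesteptheorem} to each step, and conclude via theorem \ref{additivitytheorem} with the countable-support and monotonicity hypotheses in force. That part is fine, and your identification of the two load-bearing theorems is correct.

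However, your treatment of the ``routine'' passage to $\nu_L$ contains a genuine error. You assert that ``exact dimension of a measure is compatible with the relevant equivariant Lipschitz projections between flag spaces'' and propose to deduce exact dimensionality of $\nu_L$ from that of the full unstable measure $\nu$ by pushing forward under the coarsening $\F \to \F_L$. This is false in general: pushing a measure forward under a Lipschitz submersion does not preserve exact dimension, and quantifying the loss is exactly the content of the Ledrappier--Young theory that theorems \ref{onesteptheorem} and \ref{additivitytheorem} are establishing fiber by fiber. So that route begs the question. The mechanism the paper actually uses (and which is the correct version of your ``alternative'') is lemma \ref{filteredvsflaglemma}: take $T_L$ to be the filtered topology generated by $L$ and $T_0$; then for $\nu_{T_0}$-a.e.\ $y$ the \emph{conditional} measure $\nu_{T_L,T_0}^{y}$ of the configuration measure over the base point $y$ is the image of $\nu_L$ under a locally bilipschitz map from a full-$\nu_L$-measure subset $\F_L^y \subset \F_L$. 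Thus $\nu_L$ is not a pushforward under a projection but a conditional measure on a fiber of the tower, and it is that identification — together with the fact that theorem \ref{additivitytheorem} gives exact dimension of the conditional measures $\nu_{T_L,T_0}^{y}$ along a monotone path $T_L \onefiner T^{k-1} \onefiner \cdots \onefiner T_0$ — that transfers exact dimensionality to $\nu_L$. Your write-up should replace the pushforward claim with this conditioning argument and name lemma \ref{filteredvsflaglemma} as the bridge.
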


\subsection{Entropy and dimension on configuration spaces}

In order to prove theorem \ref{exactdimensiontheorem} and describe the formula for the dimension of each \(\nu_L\),  we consider a more general family of spaces than spaces of partial flags.  

These spaces are parametrized by certain topologies on the finite set \(\lbrace 1,\ldots,N\rbrace\)
which we call admissible and will now define.

\subsubsection{Admissible topologies}

Given a topology \(T\) on \(\lbrace 1,\ldots,N\rbrace\) we denote by \(T(i)\) the atom of \(i\) for each \(1 \le i \le N\).   A topology is said to be admissible if \(T(i) \subset \lbrace i,i+1,\ldots, N\rbrace\) for all \(i\).

An admissible topology \(T\) is finer than another \(T'\) (denoted by \(T \prec T'\)) if \(T(i) \subset T'(i)\) for all \(i\).   We say \(T\) is one step finer than \(T'\) (denoted by \(T \onefiner T'\)) if \(T \prec T'\), there is a single atom \(T(i)\) with \(T(i) \neq T'(i)\), and furthermore \(T'(i) = T(i) \cup \lbrace j\rbrace\) for some \(j\).

We denote by \(T_1 \prec T_0\) the finest and coarsest admissible topologies.  An admissible topology \(T\) is said to be filtered if it is generated by \(T_0\) and some left filtration \(L\).

Whenever \(T \onefiner T'\) with \(T'(i) = T(i) \cup \lbrace j\rbrace\) we define the Lyapunov exponent \(\chi_{T,T'}\) by
\[\chi_{T,T'} = \chi_i - \chi_j.\]

A monotone path is a sequence \( T^k = T \onefiner T^{k-1} \onefiner \cdots \onefiner T^0 = T'\) such that \(\chi_{T^1,T^0} \le \chi_{T^2,T^2} \le \cdots \le \chi_{T^{k},T^{k-1}}\).

\begin{lemma}[Existence of monotone paths]\label{monotone}
 For each \(T \prec T'\) there at least one monotone path with \(T\) and \(T'\) as its endpoints.
\end{lemma}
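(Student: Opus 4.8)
The plan is to reformulate the statement in terms of partial orders on \(\{1,\ldots,N\}\) and then produce the path by a greedy peeling procedure. An admissible topology \(T\) is encoded by the relation \(i \to_T j\) meaning \(j \in T(i)\); since \(j \in T(i)\) is equivalent to \(T(j) \subseteq T(i)\), this relation is a preorder, and admissibility means precisely that \(i \to_T j\) implies \(i \le j\), which in particular makes the preorder antisymmetric, hence a partial order. In this dictionary \(T \prec T'\) is exactly \(\to_T \subseteq \to_{T'}\), and I would first record the routine fact that \(T \onefiner T'\) corresponds to \(\to_T = \to_{T'} \setminus \{(i,j)\}\) where \((i,j)\) is a \emph{cover} of \(\to_{T'}\) (i.e. there is no \(k \notin \{i,j\}\) with \(i \to_{T'} k\) and \(k \to_{T'} j\)), in which case \(\chi_{T,T'} = \chi_i - \chi_j\); conversely, deleting a cover from a partial order on \(\{1,\ldots,N\}\) satisfying \(i \to j \Rightarrow i \le j\) again gives such a partial order — transitivity cannot be destroyed by removing a cover — and changes exactly one atom, so it realizes one elementary step \(\onefiner\).

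Now fix \(T \prec T'\), set \(R = \to_T\), \(R' = \to_{T'}\), and let \(S = R' \setminus R\) be the finite set of relations that must be removed. I would build the path greedily: among the relations of \(S\) not yet deleted, always delete one whose Lyapunov gap \(\chi_i - \chi_j\) is smallest. Concretely, after some deletions the current topology corresponds to \(R \cup S'\) for a subset \(S' \subseteq S\) (with \(S' = S\) at the start and \(S' = \emptyset\), i.e. the topology \(T\), at the end), and the next step removes a pair of \(S'\) minimizing \(\chi_i - \chi_j\). Monotonicity of the resulting path is then automatic: the gap used at the \(\ell\)-th step is the smallest value of \(\chi_i - \chi_j\) over the still-undeleted set of relations at that stage, and since these sets form a shrinking chain their minima form a non-decreasing sequence, which is exactly \(\chi_{T^1,T^0} \le \chi_{T^2,T^1} \le \cdots \le \chi_{T^k,T^{k-1}}\).

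The one point needing an argument — and the main obstacle — is that the greedy choice is always legal, namely that a pair \((i,j) \in S'\) of minimal gap is a cover of the current poset \(P = R \cup S'\). I would prove this by contradiction: if some \(k\) with \(i < k < j\) had \(i \to_P k\) and \(k \to_P j\), then, because \(\chi_1 > \cdots > \chi_N\) is strictly decreasing, both \(\chi_i - \chi_k\) and \(\chi_k - \chi_j\) are strictly smaller than \(\chi_i - \chi_j\); by minimality of \((i,j)\) within \(S'\), neither \((i,k)\) nor \((k,j)\) lies in \(S'\), so both lie in \(R\), and transitivity of \(R\) forces \((i,j) \in R\) — contradicting \((i,j) \in S' \subseteq R' \setminus R\). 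Hence each greedy deletion removes a cover of the current admissible topology, yielding a legitimate step \(\onefiner\), and after \(|S|\) steps we arrive at \(T\); this is the desired monotone path with endpoints \(T\) and \(T'\). (If \(T = T'\) the path is empty and there is nothing to prove.)
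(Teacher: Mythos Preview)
Your argument is correct. The dictionary between admissible topologies and partial orders contained in the natural order on \(\{1,\ldots,N\}\) is accurate, the identification of one-step refinements with deletions of covers is right in both directions, and the greedy step is justified: if a minimal-gap pair \((i,j)\in S'\) were not a cover of \(R\cup S'\), the intermediate pairs \((i,k),(k,j)\) would have strictly smaller gap, hence lie in \(R\) by minimality, forcing \((i,j)\in R\) by transitivity of \(R\), a contradiction. Inductively each deletion removes a cover of a transitive relation, so the result stays an admissible topology, and since the remaining sets \(S'\) are nested the successive minimal gaps are non-decreasing, giving monotonicity.

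The paper itself gives no argument here; it simply cites \cite[Proposition~2.1, part~1]{ll1}. Your write-up therefore supplies a self-contained proof where the paper defers to the earlier reference. The greedy ``peel off the smallest remaining gap'' construction is in fact the same idea used in \cite{ll1}, so this is not a genuinely different route, just an explicit one.
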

\begin{proof}
 This is \cite[Proposition 2.1. part 1]{ll1}.
\end{proof}

\subsubsection{Configuration spaces} 

Given an admissible topology \(T\), we define the (weighted) configuration space \(\X_T\) (with weights \(d_1,\ldots,d_N\)) as the space of sequences \((x_I)_{I \in T}\) such that
\begin{enumerate}
 \item \(x_I\) is a \(d(I)\)-dimensional subspace of \(\R^d\) for each \(I \in T\),
 \item \(x_{I \cup J} = x_I + x_J\) for all \(I,J \in T\), and
 \item \(x_{I\cap J} = x_I \cap x_J\) for all \(I,J \in T\).
\end{enumerate}

Each configuration space \(\X_T\) is endowed with the distance corresponding to its natural embedding in the product of Grassmannian manifolds.  

 If \(T \prec T'\) we denote by \(\pi_{T,T'}: \X_T \to \X_{T'}\) the projection which consists of forgetting the subspaces \(x_I\) for all \(I \in T \setminus T'\). 

 \begin{theorem}[Configuration spaces are fiber bundles]\label{vectorbundletheorem}
  Given admissible topologies \(T \prec T'\) the configuration space \(\X_T\) with the projection \(\pi_{T,T'}\) is locally bilipschitz homeomorphic to a vector bundle over \(\X_{T'}\) with fibers of dimension
  \(\sum\limits_{i = 1}^N \sum\limits_{j \in T'(i) \setminus T(i)} d_i d_j\).
\end{theorem}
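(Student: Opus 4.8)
The plan is to reduce to the case of a one-step refinement and then compose. By Lemma~\ref{monotone} (monotonicity is irrelevant here) write $T = T^{k}\onefiner T^{k-1}\onefiner\cdots\onefiner T^{0} = T'$. Each point of $\X_T$ will have a neighbourhood that is, compatibly with the projection, bilipschitz homeomorphic to the product of an open subset of $\X_{T'}$ with a Euclidean open set, once we know the analogous statement for each one-step projection $\pi_{T^{\ell},T^{\ell-1}}$, because such charts compose and the Euclidean factors multiply. Moreover, for each $i$ the atoms grow $T(i) = T^{k}(i)\subseteq\cdots\subseteq T^{0}(i) = T'(i)$, each element of $T'(i)\setminus T(i)$ being adjoined at exactly one step, so the fibre dimensions of the successive one-step bundles sum to $\sum_{i=1}^{N}\sum_{j\in T'(i)\setminus T(i)}d_id_j$. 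Hence it suffices to treat a single step $T\onefiner T'$, say $T'(i_0) = T(i_0)\cup\{j_0\}$ with $i_0\neq j_0$ (in fact $i_0<j_0$ by admissibility); recall also that in an admissible topology all atoms are distinct, since $i=\min T(i)$.

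For one step, put $A := T(i_0)$. One checks first that a $T$-open set fails to be $T'$-open exactly when it contains $i_0$ but not $j_0$, and that every such set has the form $A\cup S$ with $S$ being $T'$-open; by the relation $x_{A\cup S} = x_A + x_S$ in the definition of $\X_T$, all the ``new'' subspaces are determined by the $\X_{T'}$-datum together with the single subspace $x_A$. So $\X_T$ is identified, over $\X_{T'}$, with the set of pairs (a point of $\X_{T'}$, a $d(A)$-dimensional subspace $x_A\subseteq\R^d$) subject to the configuration relations. Set $O_- := \bigcup\{O : O\subseteq A,\ O\text{ is }T'\text{-open}\}$, a $T'$-open subset of $A$; every admissible $x_A$ satisfies $x_{O_-}\subseteq x_A\subseteq x_{T'(i_0)}$. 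The combinatorial facts that make the dimension come out right are $A\setminus O_- = \{i_0\}$ --- because for $k\in A$, $k\neq i_0$, the atom $T(k) = T'(k)$ is $T'$-open, is contained in $A$ since $k\in A$ and $A$ is $T$-open, and contains $k$ --- and $T'(j_0)\setminus O_- = \{j_0\}$, which uses the distinctness of atoms.

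I would then prove the linear-algebra lemma identifying the fibre. Pass to the quotient $W := x_{T'(i_0)}/x_{O_-}$, of dimension $d(A)+d_{j_0}-d(O_-) = d_{i_0}+d_{j_0}$ by the first fact, and let $\ov x_{T'(j_0)}$ be the image of $x_{T'(j_0)}$, of dimension exactly $d_{j_0}$ by the second. The claim is that the configuration relations on $x_A$ (with $x_{O_-}\subseteq x_A\subseteq x_{T'(i_0)}$ and $\dim x_A = d(A)$ already granted) are equivalent to the single condition that $\ov x_A := x_A/x_{O_-}$ be a complement of $\ov x_{T'(j_0)}$ in $W$. One direction is immediate (take $O = T'(j_0)$, for which $i_0\notin T'(j_0)$ and $j_0\in T'(j_0)$, and use the dimension count); the other --- deducing all the sum and intersection relations against arbitrary $T'$-open sets from this one transversality --- is the step I expect to be the main obstacle, because it requires exploiting the Alexandrov lattice structure of $T'$ (every open set is a union of atoms, each of which relative to $A$ and to $j_0$ sits in a controlled position) to compensate for the non-distributivity of the lattice of subspaces.

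Granting the lemma, the fibre over a point of $\X_{T'}$ is the space of complements of the fixed $d_{j_0}$-plane $\ov x_{T'(j_0)}$ in $W$: a nonempty affine space, torsor over $\mathrm{Hom}\bigl(W/\ov x_{T'(j_0)},\ \ov x_{T'(j_0)}\bigr)$, of dimension $d_{i_0}d_{j_0}$. Everything in sight is algebraic and $\SL_d(\R)$-equivariant; choosing, near any point of $\X_{T'}$, an algebraic section of $\pi_{T,T'}$ (equivalently, a local algebraic choice of complement) trivializes $\X_T$ over that neighbourhood by algebraic --- hence, on relatively compact charts, bilipschitz --- maps. This proves the one-step case, and the reduction above completes the proof.
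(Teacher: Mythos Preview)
Your approach is sound and genuinely different from the paper's. Whereas you reduce to a one-step refinement and describe that fibre as the affine space of complements of a fixed $d_{j_0}$-plane inside a $(d_{i_0}+d_{j_0})$-dimensional quotient, the paper treats arbitrary $T\prec T'$ in one stroke: for each $x'\in\X_{T'}$ it takes a \emph{compatible splitting} $V=(V_1,\dots,V_N)$ (the perpendicular one, Lemma~\ref{perpcompatiblesplitting}), introduces the linear space $\nil_{T,T'}(V)$ of nilpotent maps $f$ with $f(V_i)\subset\bigoplus_{T'(i)\setminus T(i)}V_j$, and shows that $f\mapsto\big((\id+f)\bigoplus_{i\in I}V_i\big)_{I\in T}$ is a bijection onto the fibre (Lemmas~\ref{compatiblephi}--\ref{surjectivephi}). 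The dimension formula then drops out of Lemma~\ref{dimofnilspaceslemma}. Your quotient picture is essentially the one-step specialisation of this: when $T\onefiner T'$, $\nil_{T,T'}(V)\cong\mathrm{Hom}(V_{i_0},V_{j_0})$, which is your $\mathrm{Hom}(W/\ov x_{T'(j_0)},\ov x_{T'(j_0)})$ seen through the splitting.

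Two remarks on the tradeoffs. First, the paper's $\nil$-coordinates are not incidental: they are reused heavily in Sections~\ref{algebraic}--\ref{lineardynamics} (polynomial change of coordinates, affine action on one-step bundles, Oseledets estimates), so the direct construction earns its keep later. Your argument proves the theorem but would not by itself supply those coordinates for general $T\prec T'$. Second, the lemma you flag as the main obstacle is indeed true and not hard once organised: after checking $\dim x_O=d(O)$ for the new sets $O=A\cup S$, the sum relations $x_{I\cup J}=x_I+x_J$ follow case-by-case from $x_A+x'_{T'(j_0)}=x'_{T'(i_0)}$, and the intersection relations then follow either from the modular law (when $x_A$ sits inside one of the two terms) or from the dimension identity $\dim(x_I\cap x_J)=d(I)+d(J)-d(I\cup J)$ once sums are known. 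So the obstacle is bookkeeping rather than a genuine difficulty.
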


The theorem above shows that \(X_{T}\) with the projection \(\pi_{T,T'}\) is a fiber bundle over the base \(X_{T'}\) with fibers which are locally bilipschitz homeomorphic to \(\R^k\) for certain \(k\).  
Let us denote this bundle by \(\X_{T,T'}\) and by \(\X_{T,T'}^{x'} = \pi_{T,T'}^{-1}(x')\) the fiber over \(x' \in \X_{T'}\).

\subsubsection{Dynamical measures on configuration spaces}

Given an admissible topology \(T\), for \(m\)-a.e. \(\omega\) setting
\[E_T(\omega)_I = \bigoplus\limits_{i \in I}E_i(\omega),\]
for all \(I \in T\), defines a configuration in \(\X_T\).   Let \(\nu_T\) be the distribution of \(E_T\) i.e.
\[\nu_T(A) = m\left(\lbrace \omega: E_T(\omega) \in A\rbrace\right).\]

For \(T \prec T'\), clearly \((\pi_{T,T'})_*\nu_T = \nu_{T'}\).  Let \(x' \mapsto \nu_{T,T'}^{x'}\) be a disintegration of \(\nu_{T}\) with respect to the projection \(\pi_{T,T'}\).

\begin{lemma}\label{filteredvsflaglemma}
 Let \(L\) be a left filtration and \(T\) the filtered admissible topology generated by \(T_0\) and \(L\).

Then for \(\nu_{T_0}\)-a.e. \(x_0\), the measure \(\nu_{T,T_0}^{x_0}\) is the bilipschitz image of \(\nu_{L}\) restricted to a full measure set in \(\F_L\).  In particular the measure \(\nu_L\) has the same dimensional properties as the measures \(\nu_{T,T_0}^{x_0}\) for \(\nu_{T_0}\)-a.e. \(x_0\).
\end{lemma}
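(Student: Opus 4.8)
The plan is to exploit the two-sided structure of the Oseledets theorem: \(E_L(\omega)\) depends only on the ``past'' coordinates \((g_n(\omega))_{n<0}\), whereas \(E_{T_0}(\omega)\) depends only on the ``future'' coordinates \((g_n(\omega))_{n\ge 0}\). Since \(m=\mu^{\Z}\) is a product measure, the two random configurations \(E_L\) and \(E_{T_0}\) are then independent, and the lemma becomes a disintegration of a product measure.

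First I would record the measurability statements. Every atom of a left filtration \(L\) has the form \(\{1,\ldots,m\}\), and by the formulas of Section~\ref{section:oseledets},
\[E_L(\omega)_{\{1,\ldots,m\}}=E_1(\omega)\oplus\cdots\oplus E_m(\omega)=\Big\{v:\ \lim_{n\to+\infty}\tfrac1n\log\|g_{-n}^{0}(\omega)^{-1}v\|\le-\chi_m\Big\}\]
is measurable with respect to \(\sigma\big((g_n)_{n<0}\big)\); dually every element of \(T_0\) has the form \(\{m,\ldots,N\}\), and
\[E_{T_0}(\omega)_{\{m,\ldots,N\}}=E_m(\omega)\oplus\cdots\oplus E_N(\omega)=\Big\{v:\ \lim_{n\to+\infty}\tfrac1n\log\|g_{0}^{n}(\omega)v\|\le\chi_m\Big\}\]
is measurable with respect to \(\sigma\big((g_n)_{n\ge 0}\big)\). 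Hence \(\omega\mapsto\big(E_L(\omega),E_{T_0}(\omega)\big)\) has law \(\nu_L\times\nu_{T_0}\) on \(\F_L\times\X_{T_0}\).

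Next I would introduce an \emph{assembly map}. Because \(T\) is the topology generated by \(T_0\cup L\), every \(I\in T\) is obtained from elements of \(T_0\cup L\) by finitely many unions and intersections, so, applying the relations \(x_{I\cup J}=x_I+x_J\) and \(x_{I\cap J}=x_I\cap x_J\), one can assign to any pair \((y,x_0)\in\F_L\times\X_{T_0}\) in sufficiently general position a configuration \(A(y,x_0)\in\X_T\) whose restriction to \(L\) is \(y\) and whose restriction to \(T_0\) is \(x_0\). The set of admissible pairs is open with complement a finite union of proper subvarieties; it is conull for \(\nu_L\times\nu_{T_0}\) because it contains \(\big(E_L(\omega),E_{T_0}(\omega)\big)\) for \(m\)-a.e.\ \(\omega\), for which moreover \(A\big(E_L(\omega),E_{T_0}(\omega)\big)=E_T(\omega)\). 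On this set \(A\) is a smooth (in fact algebraic) bijection onto an open subset of \(\X_T\), with smooth inverse given by restriction to \(T_0\cup L\); in particular, for fixed \(x_0\) the partial map \(\Phi_{x_0}:=A(\cdot,x_0)\) is a diffeomorphism --- hence a locally bi-Lipschitz homeomorphism --- from the conull open set \(U_{x_0}:=\{y:(y,x_0)\ \text{admissible}\}\subset\F_L\) onto the fiber \(\X_{T,T_0}^{x_0}\). This last point is Theorem~\ref{vectorbundletheorem} restricted to a single fiber of \(\pi_{T,T_0}\).

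Finally, the factorization \(\omega\mapsto\big(E_L(\omega),E_{T_0}(\omega)\big)\mapsto A\big(E_L(\omega),E_{T_0}(\omega)\big)=E_T(\omega)\) gives \(\nu_T=A_*(\nu_L\times\nu_{T_0})\), and since \(\pi_{T,T_0}\circ A\) is the second-coordinate projection, uniqueness of disintegrations yields \(\nu_{T,T_0}^{x_0}=(\Phi_{x_0})_*\big(\nu_L|_{U_{x_0}}\big)\) for \(\nu_{T_0}\)-a.e.\ \(x_0\), where \(\nu_L(U_{x_0})=1\) by Fubini. This is exactly the assertion that \(\nu_{T,T_0}^{x_0}\) is the bi-Lipschitz image of \(\nu_L\) restricted to a full-measure set, and the ``same dimensional properties'' clause follows since a locally bi-Lipschitz homeomorphism preserves the \(\liminf\) and \(\limsup\) defining \(\un\dim\) and \(\ov\dim\). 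The one place needing genuine care is the passage from the one-sided Oseledets statement of Section~\ref{section:oseledets} to the past/future measurability used above (the truly two-sided content of the theorem), together with the verification that the restriction map is the inverse of \(\Phi_{x_0}\) on the relevant conull sets; the bi-Lipschitz regularity itself is handed to us by Theorem~\ref{vectorbundletheorem}.
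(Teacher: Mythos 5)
Your argument is essentially the same as the paper's: independence of the past-measurable \(E_L\) and future-measurable \(E_{T_0}\) gives joint law \(\nu_L\times\nu_{T_0}\); an assembly map \(A\) (the paper's \(F_L\), defined on pairs in general position by \(F_L(x,y)_{I\cap J}=x_I\cap y_J\)) pushes this to \(\nu_T\); and disintegration over the second coordinate identifies \(\nu_{T,T_0}^{x_0}\) with the image of \(\nu_L\) restricted to a conull set. One small misattribution: Theorem~\ref{vectorbundletheorem} describes the bundle structure of \(\X_{T,T_0}\) over \(\X_{T_0}\) via nilpotent coordinates and does not address the map \(\F_L\supset U_{x_0}\to\X_{T,T_0}^{x_0}\); the bi-Lipschitz property is instead supplied by exactly the direct observation you also make (the inverse of \(A(\cdot,x_0)\) is the Lipschitz restriction to \(L\), and each coordinate of \(A(\cdot,x_0)\) is an intersection \(x_I\cap y_J\) with a fixed transverse subspace, which is locally Lipschitz on the Grassmannian), so the citation is superfluous but the argument stands.
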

\begin{proof}[Proof of Lemma \ref{filteredvsflaglemma}]
We say \((x,y) \in \F_L \times \X_{T_0}\) are in general position if \(\dim(x_{I} \cap y_{J}) = d(I \cap J)\) for all \(I \in L, J \in T_0\).   Let \((\F_L \times \X_{T_0})^*\) be the set of pairs in general position and for each \(y \in \X_{T_0}\) let \(\F_L^{y}\) be the set of \(x\) such that \((x,y)\) is in general position.

If \(I \in L\) then \(I = \lbrace 1,\ldots,i\rbrace\) for some \(i\) and 
\[E_L(\omega)_I = (E_1\oplus \cdots \oplus E_i)(\omega).\]

On the other hand if \(J \in T_0\) then \(J = \lbrace k,\ldots,N\rbrace\) for some \(k\) and
\[E_{T_0}(\omega)_I = (E_k\oplus \cdots \oplus E_N)(\omega).\]

It follows that \(E_L(\omega)\) and \(E_{T_0}(\omega)\) are in general position for \(m\)-a.e. \(\omega\).

Since \(E_L(\omega)\) is \(g_{-1}(\omega),g_{-2}(\omega),\ldots\) measurable and \(E_{T_0}(\omega)\) is \(g_0(\omega),g_1(\omega),\ldots\) measurable, they are independent and their joint distribution is \(\nu_L \times \nu_{T_0}\).   Hence, \((\nu_{L} \times \nu_{T_0})((\F_L \times \X_{T_0})^*) = 1\) and \(\nu_L(\F_L^y) = 1\) for \(\nu_{T_0}\)-a.e \(y\).

Notice that each atom  \(K \in T\) is of the form \(I \cap J\) for some atoms \(I \in L\) and \(J \in T_0\).  Let \(F_L: (\F_L \times \X_{T_0})^{*} \to \X_T\) be such that \( F_L(x,y) \) is the unique configuration in \(\X_T \) with  \(F_L(x,y)_{I \cap J} = x_I \cap x_J\) for all  atoms \(I \in L, J \in T_0\).    Observe that \(F_L\) is well defined and bijective since \(x_I = F_L(x,y)_I\) for all atoms \(I \in L\) and \(y_J = F_L(x,y)_J\) for all atoms \(J \in T_0\).

We claim that for each \(y \in \X_{T_0}\) the restriction of \(F_L\) to \(\F_L^{y} \times \lbrace y\rbrace\) is locally bi-lipschitz.   To see this first note that the inverse is a projection forgetting the subspaces \(z_I\) for \(z \in \X_{T}\) if \(I \notin L\) and hence is trivially Lipschitz.   In the forward direction for each atom \(K \in T\) we have \(K = I \cap J\) for some atoms \(I \in L\) and \(J \in T_0\) and therefore \(F_L(x,y)_K = x_I \cap y_J\) which is locally Lipschitz as a function of \(x_I\) in the Grasmannian of dimension \(d(I)\) such that the intersection with the fixed subspace \(y_J\) has dimension \(d(I \cap J)\).

To conclude we observe that since \(\pi_{T,T_0}(F_L(x,y)) = y\) we obtain that \(\nu_{T,T_0}^{y} = (F_L)_*((\nu_L)_{|\F_L^y}\times \lbrace y\rbrace)\) for \(\nu_{T_0}\)-a.e. \(y\).   This concludes the proof of Lemma \ref{filteredvsflaglemma}.
\end{proof}

\subsubsection{Fibered entropy of dynamical  measures}

Let \(T \prec T'\) be  a pair of admissible topologies.
We associate the (fiber) entropy \( \kappa _{T,T'}\)  by the formula
\begin{equation}\label{kappa1} \kappa_{T,T'} = \int\limits_{\Omega} \log \frac{d g_0(\omega)_* \nu_{T,T'}^{E_{T'}(\omega)}}{ d \nu_{T,T'}^{g_0(\omega)E_{T'}(\omega)}}\left(g_0(\omega)E_T(\omega)\right) dm(\omega),\end{equation}
setting \(\kappa_{T,T'} = +\infty\) if the density in the integral fails to exist on a set of positive  \(m\)-measure.
\begin{lemma}\label{entropypropertieslemma}
 The entropies \(\kappa_{T,T'}\) satisfy the following:
 \begin{enumerate}
  \item \(0 \le \kappa_{T,T'} < +\infty\) for all \(T \prec T'\)
  \item \(\kappa_{T,T''} = \kappa_{T,T'} + \kappa_{T',T''}\) for all \(T \prec T' \prec T''\).
  \item For each left filtration \(L\) one has \(\kappa_{T,T_0} = \kappa(\mu,\nu_{L})\) where \(T\) is the filtered admissible topology generated by \(L\) and \(T_0\).
 \end{enumerate}
\end{lemma}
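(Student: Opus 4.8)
The plan is to prove the three items in the order: the non‑negativity in~(1), then~(3), then~(2), and finally the finiteness in~(1); this avoids circularity, since the proof of~(2) uses only the non‑negativity and the finiteness in~(1) uses (2)~and~(3). Throughout I use the equivariance \(g_0(\omega)E_i(\omega)=E_i(\sigma\omega)\) (from \(g_0^{n+1}(\omega)=g_0^n(\sigma\omega)g_0(\omega)\)), hence \(g_0(\omega)E_T(\omega)=E_T(\sigma\omega)\) and \(g_0(\omega)E_{T'}(\omega)=E_{T'}(\sigma\omega)\); so the density in~(\ref{kappa1}) compares the two probabilities \(g_0(\omega)_*\nu_{T,T'}^{E_{T'}(\omega)}\) and \(\nu_{T,T'}^{E_{T'}(\sigma\omega)}\), both carried by the fiber over \(E_{T'}(\sigma\omega)\), at the point \(E_T(\sigma\omega)\).

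For the lower bound in~(1) I would rewrite \(\kappa_{T,T'}\) as an average of Kullback--Leibler divergences. Disintegrating the law of \(\bigl(g_0(\omega),E_{T'}(\omega),E_T(\omega)\bigr)\) over the first two coordinates and performing, inside the fiber over \(gx'\), the change of variable \(y=gz\), one gets
\[\kappa_{T,T'}=\int_{G\times\X_{T'}}\Bigl(\int \log\frac{dg_*\nu_{T,T'}^{x'}}{d\nu_{T,T'}^{gx'}}(y)\,d\bigl(g_*\nu_{T,T'}^{x'}\bigr)(y)\Bigr)\,d\lambda_0(g,x'),\]
where \(\lambda_0\) is the law of \((g_0(\omega),E_{T'}(\omega))\) and the inner integral is the relative entropy of \(g_*\nu_{T,T'}^{x'}\) with respect to \(\nu_{T,T'}^{gx'}\), hence \(\ge 0\) (or \(+\infty\)) by Jensen. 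What legitimizes this rewriting, and the only genuinely non‑mechanical step, is that \(g_0(\omega)\) and \(E_T(\omega)\) are conditionally independent given \(E_{T'}(\omega)\) (so that, conditionally on \(\{E_{T'}(\omega)=x'\}\) and on \(g_0(\omega)\), the configuration \(E_T(\omega)\) still has law \(\nu_{T,T'}^{x'}\)). I would deduce this from two structural facts about the Oseledets data: writing \(W^{+}_k(\omega)=E_k(\omega)\oplus\cdots\oplus E_N(\omega)\), \(W^{-}_k(\omega)=E_1(\omega)\oplus\cdots\oplus E_k(\omega)\), and \(\mathcal W=\sigma\bigl(W^{+}_1(\omega),\dots,W^{+}_N(\omega)\bigr)\), one has (a) \(\mathcal W\subset\sigma(E_T(\omega))\subset\mathcal W\vee\sigma(g_{-1}(\omega),g_{-2}(\omega),\dots)\) for \emph{every} admissible \(T\) --- the left inclusion because \(\{k,\dots,N\}\) is \(T\)-open so \(W^{+}_k(\omega)=E_T(\omega)_{\{k,\dots,N\}}\) is recorded, the right because \(E_T(\omega)_I=\sum_{i\in I}\bigl(W^{+}_i(\omega)\cap W^{-}_i(\omega)\bigr)\) and each \(W^{-}_i(\omega)\) is \(\sigma(g_{-1}(\omega),\dots)\)-measurable --- and (b) \(\sigma(g_{-1}(\omega),\dots)\) is independent of \(\sigma(g_0(\omega))\vee\mathcal W\subset\sigma(g_0(\omega),g_1(\omega),\dots)\). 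Conditioning on \(\mathcal W\), (b) makes \(g_0(\omega)\) independent of the past, hence by (a) of \((E_{T'}(\omega),E_T(\omega))\); since \(\mathcal W\subset\sigma(E_{T'}(\omega))\) when \(T\prec T'\), the required conditional independence follows.

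For~(3) I would transfer the identification of Lemma~\ref{filteredvsflaglemma}: with \(T\) the filtered topology of \(L\), for \(\nu_{T_0}\)-a.e.\ \(y\) one has \(\nu_{T,T_0}^{y}=(F_L^{y})_*\bigl((\nu_L)_{|\F_L^{y}}\bigr)\) with \(F_L^{y}=F_L(\cdot,y)\) bilipschitz, \(\nu_L(\F_L^{y})=1\), and \(E_T(\omega)=F_L^{E_{T_0}(\omega)}(E_L(\omega))\). The decisive point is that \(F_L\) is \(G\)-equivariant, \(gF_L(x,y)=F_L(gx,gy)\), hence \(g\circ F_L^{y}=F_L^{gy}\circ g\) and \(g\,\F_L^{y}=\F_L^{gy}\). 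Pushing forward by \(g_0(\omega)\), using invariance of the Radon--Nikodym derivative under the bijection \(F_L^{E_{T_0}(\sigma\omega)}\), discarding the full‑measure restrictions to \(\F_L^{\,\cdot}\) (valid since \(\nu_L(\F_L^{E_{T_0}(\sigma\omega)})=(g_0(\omega)_*\nu_L)(\F_L^{E_{T_0}(\sigma\omega)})=1\)), and using \(E_L(\sigma\omega)=g_0(\omega)E_L(\omega)\), one finds that the integrand of \(\kappa_{T,T_0}\) at \(\omega\) equals \(\log\frac{dg_0(\omega)_*\nu_L}{d\nu_L}(E_L(\sigma\omega))\), one side being defined exactly when the other is. Since \(E_L(\omega)\) is \(\sigma(g_{-1}(\omega),\dots)\)-measurable it is independent of \(g_0(\omega)\) and distributed as \(\nu_L\); integrating recovers \(\kappa(\mu,\nu_L)\) from its definition, and this is finite by Theorem~\ref{finiteentropytheorem}.

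For~(2) I would use that the disintegrations tower, \(\nu_{T,T''}^{x''}=\int_{\pi_{T',T''}^{-1}(x'')}\nu_{T,T'}^{x'}\,d\nu_{T',T''}^{x''}(x')\); pushing forward by \(g\), substituting \(y'=gx'\), and comparing with \(\nu_{T,T''}^{gx''}=\int_{\pi_{T',T''}^{-1}(gx'')}\nu_{T,T'}^{y'}\,d\nu_{T',T''}^{gx''}(y')\), the chain rule for densities of measures disintegrated over the common map \(\pi_{T,T'}\) gives
\[\frac{dg_*\nu_{T,T''}^{x''}}{d\nu_{T,T''}^{gx''}}(z)=\frac{dg_*\nu_{T,T'}^{g^{-1}\pi_{T,T'}(z)}}{d\nu_{T,T'}^{\pi_{T,T'}(z)}}(z)\cdot\frac{dg_*\nu_{T',T''}^{x''}}{d\nu_{T',T''}^{gx''}}\bigl(\pi_{T,T'}(z)\bigr),\]
and evaluating at \(g=g_0(\omega)\), \(x''=E_{T''}(\omega)\), \(z=g_0(\omega)E_T(\omega)\) (so \(\pi_{T,T'}(z)=g_0(\omega)E_{T'}(\omega)\)) the three factors are exactly the integrands of \(\kappa_{T,T''}\), \(\kappa_{T,T'}\) and \(\kappa_{T',T''}\) in~(\ref{kappa1}). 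Taking logarithms and integrating --- licit because the summands are \(\ge 0\) by the first step --- yields \(\kappa_{T,T''}=\kappa_{T,T'}+\kappa_{T',T''}\); the \(+\infty\) case is dealt with by noting that \(g_*\nu_{T,T''}^{x''}\ll\nu_{T,T''}^{gx''}\) holds iff \(g_*\nu_{T',T''}^{x''}\ll\nu_{T',T''}^{gx''}\) and \(g_*\nu_{T,T'}^{x'}\ll\nu_{T,T'}^{gx'}\) for \(\nu_{T',T''}^{x''}\)-a.e.\ \(x'\). Finally, for the finiteness in~(1): if \(L_*\) is the full left filtration then the filtered topology it generates with \(T_0\) is the finest admissible topology \(T_1\) (its atoms are \(\{i,\dots,N\}\cap\{1,\dots,i\}=\{i\}\)), so for any \(T\prec T'\) we have \(T_1\prec T\prec T'\prec T_0\); by~(2) and non‑negativity, \(\kappa_{T,T'}\le\kappa_{T_1,T}+\kappa_{T,T'}+\kappa_{T',T_0}=\kappa_{T_1,T_0}=\kappa(\mu,\nu_{L_*})\) by~(3), and \(\kappa(\mu,\nu_{L_*})\le\sum_{0<i<j\le N}d_id_j(\chi_i-\chi_j)<+\infty\) by Theorem~\ref{finiteentropytheorem}. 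I expect the conditional‑independence step in~(1) to be the main obstacle; the rest is bookkeeping with disintegrations, with the \(+\infty\) convention the chief nuisance.
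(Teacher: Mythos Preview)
Your argument is correct, and the conditional independence step you flagged as the main obstacle is indeed the heart of the matter; your proof of it via the tower $\mathcal W\subset\sigma(E_{T'})\subset\sigma(E_T)\subset\mathcal W\vee\sigma(g_{-1},g_{-2},\dots)$ together with $\mathcal W\subset\sigma(g_0,g_1,\dots)$ is clean and correct (the passage from $(E_T,E_{T'})\perp g_0\mid\mathcal W$ to $E_T\perp g_0\mid E_{T'}$ uses the decomposition of conditional independence and $\mathcal W\subset\sigma(E_{T'})$, as you indicate).

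The paper takes a different organizational route: it identifies $\kappa_{T,T'}$ with the conditional mutual information $I(g_{-1},E_T\mid E_{T'})$ (the base case $T'=T_0$ is imported from \cite[Proposition~6.2]{ll1}), and then reads off non\-negativity and the additivity~(2) from standard properties of mutual information, while~(3) becomes $I(g_{-1},E_T\mid E_{T_0})=I(g_{-1},E_L\mid E_{T_0})=I(g_{-1},E_L)$ using Lemma~\ref{filteredvsflaglemma} and the independence of $E_{T_0}$ from $(g_{-1},E_L)$. Your approach is essentially the ``unpacked'' version of this: your KL rewriting of $\kappa_{T,T'}$ and the conditional independence are exactly the analytic content of the identification $\kappa_{T,T'}=I(g_{-1},E_T\mid E_{T'})$, and your density chain rule for~(2) is the pointwise form of the mutual--information chain rule. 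What your route buys is self--containment (you do not invoke \cite{ll1}) and an explicit proof of the conditional independence; what the paper's route buys is that once the mutual--information identification is in hand, (1) and (2) are one--line consequences and the $+\infty$ bookkeeping is absorbed into the formalism. For~(3) your equivariance argument for $F_L$ is a legitimate alternative to the paper's information--theoretic reduction; both ultimately rest on Lemma~\ref{filteredvsflaglemma}. Finiteness is handled identically in both proofs.
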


\subsubsection{Exact dimension results}

We are now able to state our two main results, which together imply theorem \ref{exactdimensiontheorem}.

\begin{theorem}[Exact dimension for one step disintegrations]\label{onesteptheorem}
 If \(T \onefiner T'\) are admissible, then for \(\nu_{T}\)-a.e. \(x'\) the measure \(\nu_{T,T'}^{x'}\) is exact dimensional and
 \[\dim(\nu_{T,T'}^{x'}) = \frac{\kappa_{T,T'}}{\chi_{T,T'}}.\]
\end{theorem}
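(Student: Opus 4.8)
The plan is to reduce the statement about the one-step disintegration \(\nu_{T,T'}^{x'}\) to a problem about stationary measures on a single equivariant vector bundle, and then to run the Ledrappier--Young--type machinery (entropy equals dimension times exponent) on that bundle. First I would invoke Theorem \ref{vectorbundletheorem}: since \(T \onefiner T'\), the projection \(\pi_{T,T'} : \X_T \to \X_{T'}\) is (locally bilipschitz to) an \(\SL_d(\R)\)-equivariant vector bundle with fibers of linear dimension \(d_i d_j\), where \(T'(i) = T(i) \cup \lbrace j\rbrace\) is the single atom that changes. Because bilipschitz maps preserve all the dimension quantities appearing in the statement (the \(\liminf\) and \(\limsup\) of \(\log\nu(B(x,r))/\log r\)), I may and will work directly on this bundle model. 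The base measure is \(\nu_{T'}\), and \(x' \mapsto \nu_{T,T'}^{x'}\) is its disintegration; by Lemma \ref{entropypropertieslemma}(1) the fiber entropy \(\kappa_{T,T'}\) is finite, and by definition the associated Lyapunov exponent is \(\chi_{T,T'} = \chi_i - \chi_j > 0\) (admissibility forces \(j \ge i\), and distinctness of exponents with \(i \ne j\) gives strict positivity; the diagonal case \(T(i)\) gaining nothing is excluded).

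The core of the argument is an entropy--dimension identity on the fiber bundle \(\X_{T,T'}\), which I would set up via the skew-product dynamics over the invertible base \((\X_{T'}, \nu_{T'})\) driven by the shift \(\sigma\) on \(\Omega\). The key structural input, to be extracted from the linear dynamics section of the paper, is that the derivative cocycle of the \(G\)-action along the fibers of \(\X_{T,T'}\) is, up to bounded distortion, a scalar cocycle contracting at exponential rate \(\chi_{T,T'}\): a tangent vector to the fiber \(\X_{T,T'}^{x'}\) corresponds (in suitable algebraic coordinates, cf.\ Theorem \ref{changeofcoordinatestheorem}) to a \(\mathrm{Hom}(E_i, E_j)\)-type object, and its norm under \(g_0(\omega)\) is multiplied by a factor comparable to \(\|g_0(\omega)|_{E_j}\| / \|g_0(\omega)|_{E_i}\|\), whose logarithmic growth rate along the orbit is exactly \(\chi_j - \chi_i = -\chi_{T,T'}\) by the Oseledets control \eqref{Oseledets}. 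With this at hand, the plan is: (i) show the conditional measures \(\nu_{T,T'}^{x'}\) transform under \(g_0(\omega)\) exactly as a disintegration should, so that \(\kappa_{T,T'}\) in \eqref{kappa1} is the mean information of the partition refinement in the fiber direction; (ii) build an increasing sequence of partitions of the fibers adapted to the contraction — e.g.\ pullbacks of a fixed partition under the cocycle, whose atoms at time \(n\) have diameter \(\asymp e^{-n\chi_{T,T'}}\) — and apply the Shannon--McMillan--Breiman theorem along the base dynamics to get that \(-\log \nu_{T,T'}^{x'}(P_n(x)) / n \to \kappa_{T,T'}\) for a.e.\ point; (iii) convert this partition-information asymptotic into a ball-measure asymptotic, using that the partition atoms are comparable to balls of the stated radius, to conclude \(\log \nu_{T,T'}^{x'}(B(x,r))/\log r \to \kappa_{T,T'}/\chi_{T,T'}\).

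For the upper bound on dimension (i.e.\ \(\un\dim \ge \kappa_{T,T'}/\chi_{T,T'}\), equivalently \(\nu_{T,T'}^{x'}(B(x,r)) \le r^{\delta - \e}\) eventually), the partition-to-ball comparison is the clean direction: each partition atom \(P_n(x)\) contains a ball of radius \(\gtrsim e^{-n\chi_{T,T'}}\), so a ball of that radius is covered by boundedly many atoms and its measure is controlled by SMB. For the lower bound on dimension one needs the reverse: a ball of radius \(r \approx e^{-n\chi_{T,T'}}\) should not be much smaller in measure than a single atom \(P_n(x)\). This requires a transversality or non-concentration estimate — the conditional measure must not put most of its mass near the boundary of an atom at a much finer scale — and this is exactly where the countable-support hypothesis of Theorem \ref{exactdimensiontheorem} ultimately enters in the full proof; at the level of Theorem \ref{onesteptheorem}, though, I expect the lower bound to follow from an entropy-drop / "doubling of information" argument: if \(\limsup \log\nu(B(x,r))/\log r\) were strictly below \(\kappa_{T,T'}/\chi_{T,T'}\) on a positive-measure set, then comparing information at scales \(e^{-n\chi_{T,T'}}\) and \(e^{-2n\chi_{T,T'}}\) along the stationary dynamics would contradict the additivity and finiteness of \(\kappa_{T,T'}\). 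The main obstacle is precisely step (iii) in its lower-bound form — passing from almost-sure control of the information function of a dynamically defined partition to almost-sure control of \(\nu(B(x,r))\) for all small \(r\), uniformly enough to identify the \(\limsup\) — since the partition atoms, while comparable in scale, are geometrically irregular sets whose shape is distorted by the cocycle; handling this distortion (bounded eccentricity of atoms, controlled by a tempered/subexponential factor from \eqref{Oseledets}) and showing the bad scales between \(e^{-n\chi}\) and \(e^{-(n+1)\chi}\) contribute nothing is the delicate quantitative heart of the proof.
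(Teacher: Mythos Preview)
Your proposal diverges from the paper's proof in both technique and in one structural claim that, as stated, is not correct.

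\textbf{The ``scalar cocycle'' claim is the real gap.} You assert that the derivative cocycle along the fiber is, up to bounded distortion, a scalar contraction by \(e^{-\chi_{T,T'}}\). When \(d_i d_j > 1\) this is false at finite time: the fiber is \(d_i d_j\)-dimensional, the action is affine (Lemma \ref{affineonesteplemma}), and its linear part \(A_0^n(\omega)\) has \(d_i d_j\) singular values whose exponential rates all equal \(-\chi_{T,T'}\) \emph{asymptotically}, but which are not comparable for fixed \(n\). Your partition atoms would therefore have unbounded eccentricity, and the ``bounded eccentricity controlled by a tempered factor'' step you flag as delicate is exactly where your argument is incomplete. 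The paper resolves this not by controlling eccentricity but by a two-step scheme: first prove (Lemma \ref{mainonesteplemma}) the sandwich \(\kappa/I_{d_id_j} \le \underline{\dim} \le \overline{\dim} \le \kappa/I_1\), where \(I_1, I_{d_id_j}\) are the integrals of the extreme log-singular values of the one-step linear part; then replace \(\mu\) by \(\mu^{*K}\) (which leaves the conditional measures unchanged and multiplies \(\kappa,\chi\) by \(K\)) and use Lemma \ref{affineonesteplemma} to make both \(I_1/K\) and \(I_{d_id_j}/K\) arbitrarily close to \(\chi_{T,T'}\). This \(\mu^{*K}\) squeeze is the missing idea in your outline.

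\textbf{The method itself is also different.} The paper does not build partitions or invoke Shannon--McMillan--Breiman. Instead it works directly with balls: it shows (Lemma \ref{makerapproximationlemma}, via Besicovitch covering and a maximal-function estimate) that the averaged log-density \(f_r(\omega) = \log \frac{\nu_{\sigma^{-1}\omega}(g_{-1}^{-1}B(x,r))}{\nu_\omega(B(x,r))}\) is dominated by an integrable function and converges to the pointwise log-density \(f\) as \(r \downarrow 0\), and then applies Maker's ergodic theorem to the telescoping sum \(\sum_{k=0}^{n-1} f_{r_{n-k}(\sigma^{-k}\omega)}(\sigma^{-k}\omega)\) with \(r_n\) chosen as a product of extreme singular values. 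With \(r_n = \prod s_{d_id_j}\) the preimage of a ball is contained in a ball, the sum telescopes cleanly, and one gets \(\underline{\dim} \ge \kappa/I_{d_id_j}\) immediately. With \(r_n = \prod s_1\) the inclusion reverses, a residual term \(\nu_{\sigma^{-n}\omega}(B(x(\sigma^{-n}\omega),1))\) survives, and one disposes of it by passing to a positive-density subsequence of times where this quantity is bounded below.

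\textbf{Two further corrections.} First, countable support plays no role whatsoever in Theorem \ref{onesteptheorem}; it is used only for the additivity Theorem \ref{additivitytheorem}. Your suggestion that it might enter in the ball-versus-atom comparison is a misattribution. Second, you have the easy and hard directions reversed: in the paper the bound \(\underline{\dim} \ge \kappa/I_{d_id_j}\) is the clean telescoping direction, while \(\overline{\dim} \le \kappa/I_1\) is the one requiring the additional subsequence argument.
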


\begin{theorem}[Additivity along monotone paths]\label{additivitytheorem}
  If \(\mu\) has countable support, and 
  \( T^k = T \onefiner T^{k-1} \onefiner \cdots \onefiner T^0 = T'\) is a monotone path of admissible topologies then \(\nu_{T,T'}^{x'}\) is exact dimensional for \(\nu_{T'}\)-a.e. \(x'\) and
  \[\dim(\nu_{T,T'}^{x'}) = \sum\limits_{i = 0}^{k-1}\frac{\kappa_{T^{i+1},T^i}}{\chi_{T^{i+1},T^{i}}}.\]
\end{theorem}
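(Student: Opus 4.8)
I would argue by induction on the length \(k\) of the monotone path, using Theorem \ref{onesteptheorem} for the base case \(k=1\) and at each inductive step, and reducing everything to a statement about how the dimension of a measure on an iterated vector bundle splits along the tower \(\X_{T^{k}}\to\X_{T^{k-1}}\to\cdots\to\X_{T^{0}}\) (recall \(T'=T^{0}\)). Two preliminary remarks frame the induction. First, by associativity of disintegrations, for \(\nu_{T^{0}}\)-a.e.\ \(x^{0}\) the measure \(\nu_{T^{k},T^{0}}^{x^{0}}\) on the fiber \(\X_{T^{k},T^{0}}^{x^{0}}\) disintegrates over the projection onto \(\X_{T^{1},T^{0}}^{x^{0}}\) with base measure \(\nu_{T^{1},T^{0}}^{x^{0}}\) and conditionals the measures \(\nu_{T^{k},T^{1}}^{x^{1}}\). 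Second, by Lemma \ref{entropypropertieslemma}(2) the entropy is \emph{already} additive, \(\kappa_{T^{k},T^{0}}=\sum_{i=0}^{k-1}\kappa_{T^{i+1},T^{i}}\), so the content of the theorem is precisely that dividing each layer's entropy by its own Lyapunov exponent and summing produces no loss of dimension.

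\textbf{The gluing lemma.} The key step I would isolate is the following. Suppose \(p\colon E\to B\) is (locally, via Theorem \ref{vectorbundletheorem}) a vector bundle whose fibers are contracted by the cocycle at rate \(\chi_{T^{1},T^{0}}\); suppose \(\bar\nu\) on \(B\) is exact dimensional of dimension \(\beta\); suppose \(\nu\) on \(E\) disintegrates over \(p\) into conditionals that are exact dimensional with \(\bar\nu\)-a.e.\ constant dimension \(\alpha\); and suppose \(\chi_{T^{1},T^{0}}\) is the \emph{smallest} among all the rates \(\chi_{T^{i+1},T^{i}}\) occurring in the tower above \(E\) (this last hypothesis is where monotonicity of the path is used). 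Then \(\nu\) is exact dimensional of dimension \(\alpha+\beta\). Granting this, the theorem follows by applying it with \(B=\X_{T^{1},T^{0}}^{x^{0}}\) and \(E=\X_{T^{k},T^{0}}^{x^{0}}\), taking \(\beta=\kappa_{T^{1},T^{0}}/\chi_{T^{1},T^{0}}\) from Theorem \ref{onesteptheorem} and \(\alpha=\sum_{i=1}^{k-1}\kappa_{T^{i+1},T^{i}}/\chi_{T^{i+1},T^{i}}\) from the induction hypothesis applied to the monotone sub-path \(T^{k}\onefiner\cdots\onefiner T^{1}\); the resulting value \(\alpha+\beta\) is the asserted one, and its independence of \(x^{0}\) comes from ergodicity of \(\nu_{T^{k}}\) (Theorem \ref{finiteentropytheorem}) together with the \(G\)-equivariance of the tower.

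\textbf{Proving the gluing lemma; the main obstacle.} The upper bound \(\ov\dim\nu\le\alpha+\beta\) is routine: in local product coordinates a ball \(B(\xi,r)\subset E\) contains the product of an \(r\)-ball in \(B\) with an \(r\)-ball in the fiber, so \(\nu(B(\xi,r))\ge\int_{B(p(\xi),r)}\nu^{b}(B(\cdot ,r))\,d\bar\nu(b)\), and feeding in exact dimensionality of \(\bar\nu\) and of the conditionals, made uniform on a set of measure \(>1-\e\) by Egorov, gives \(\liminf_{r\to0}\log\nu(B(\xi,r))/\log r\le\alpha+\beta\) there; letting \(\e\to0\) finishes it. The hard direction is \(\un\dim\nu\ge\alpha+\beta\), and this is exactly where additivity can fail in general (the \(\SL_{3}(\R)\) counterexample of \cite{ll1}), because the conditionals \(x^{1}\mapsto\nu_{T^{k},T^{1}}^{x^{1}}\) may be ``aligned'' along an exceptional set of small \(\bar\nu\)-measure, making the naive Fubini bound lossy. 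I would handle this direction by passing from geometric balls to the dynamical refinement generated by the random matrix products and running a Ledrappier--Young type argument: the total entropy \(\kappa_{T^{k},T^{0}}\) is released layer by layer at the respective rates \(\chi_{T^{i+1},T^{i}}\), and one must show that converting it into local dimension layer by layer loses nothing. Here both hypotheses enter: monotonicity guarantees that at a fixed physical scale \(r\) the number of dynamical steps needed to resolve the successive layers is non-decreasing as one descends from the finest layer to the coarsest, so the corresponding dynamical partitions are nested and the per-layer entropy contributions do not interfere; and the countable support of \(\mu\) renders the relevant scales discrete (they are governed by the countably many values of \(\log\|g\|\), \(g\) in the support of \(\mu\)), which allows the no-loss assertion to be obtained by a direct subadditivity/second-moment estimate on conditional entropies rather than through an inverse-entropy theorem. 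This entropy-to-dimension conversion without loss along the monotone tower is the crux of the whole argument; everything else is bookkeeping with Egorov's theorem and the bilipschitz product structure supplied by Theorem \ref{vectorbundletheorem}.
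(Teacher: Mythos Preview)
Your inductive skeleton is close to the paper's: the paper also peels off one step at a time, proving for each \(t\) the two inequalities
\(\underline{\delta}^{t-1}\ge\underline{\delta}^{t}+\kappa_{T^{t},T^{t-1}}/\chi_{T^{t},T^{t-1}}\) and
\(\overline{\delta}^{t-1}\le\overline{\delta}^{t}+\kappa_{T^{t},T^{t-1}}/\chi_{T^{t},T^{t-1}}\), and summing. But you have the r\^oles of the two inequalities reversed, and this matters. The \emph{lower} bound \(\underline{\dim}\nu\ge\alpha+\beta\) is the general one: it follows from Theorem~\ref{onesteptheorem} together with \cite[Lemma~11.3.1]{ledrappier-young}, and needs neither monotonicity nor countable support. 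The \emph{upper} bound \(\overline{\dim}\nu\le\alpha+\beta\) is the delicate direction, and it is this inequality that genuinely uses both hypotheses (via the argument of \cite[Section~8]{ll1}, with Lemma~\ref{approximationlemma} and formula~(\ref{kappa7}) as inputs). Indeed, in the \(\SL_{3}(\R)\) counterexamples of \cite[Section~9]{ll1} along a non-monotone path one has \(\dim\nu>\sum\gamma_{t}\), so it is the upper bound that fails, not the lower one.

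Concretely, your ``routine'' Fubini/Egorov argument for \(\overline{\dim}\nu\le\alpha+\beta\) does not work. In the inequality \(\nu(B(\xi,r))\ge\int_{B(p(\xi),cr)}\nu^{b}\bigl(B_{F}(\xi_{F},cr)\bigr)\,d\bar\nu(b)\) you need a lower bound on \(\nu^{b}(B_{F}(\xi_{F},cr))\) for \(\bar\nu\)-most \(b\) near \(p(\xi)\); but exact dimensionality of \(\nu^{b}\) controls \(\nu^{b}(B_{F}(z,r))\) only at \(\nu^{b}\)-typical \(z\), and \(\xi_{F}\) is \(\nu^{p(\xi)}\)-typical, not \(\nu^{b}\)-typical for \(b\neq p(\xi)\). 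Egorov gives uniformity in \(r\), not uniformity of the centre across fibres. (A non-dynamical illustration: take \(\bar\nu\) Lebesgue on \([0,1]\) and \(\nu^{b}=\delta_{W(b)}\) with \(W\) a generic Brownian path; then \(\alpha=0\), \(\beta=1\), yet \(\nu\) lives on a graph of dimension \(3/2>\alpha+\beta\).) The dynamical input you describe---passing to Bowen-type atoms via the countable alphabet, using monotonicity so that the scale required to resolve the one-step base is the coarsest one and hence dominates the approximation rate of Lemma~\ref{approximationlemma}---is exactly what the paper uses, but to prove the \emph{upper} bound. So your intuitions for the hard step are essentially right; they are just attached to the wrong inequality.
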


\section{Proof of Theorem \ref{finiteentropytheorem}}\label{theorem:finiteentropytheorem}
\subsection{Proof of stationarity}

For each \(i = 1,\ldots,d\) we have 
\[(E_1\oplus\cdots \oplus E_i)(\omega) = \left\lbrace v \in \R^d: \lim\limits_{n \to +\infty} \frac{1}{n}\log \|g_{-n}^{0}(\omega)v\| \le -\chi_i \right\rbrace.\]

It follows that for each left filtration \(L\) the flag \(E_L\) is \(g_{-1},g_{-2},\ldots\)-measurable.  In particular \(E_L\) and \(g_0\) are independent.  From this we obtain that the distribution of \(g_0(\omega)E_L(\omega)\) is exactly
\[\int\limits_G g_*\nu_L d\mu(g).\]

Let \(\sigma:\Omega \to \Omega\) be the left shift, so that \(g_n(\sigma\omega) = g_{n+1}(\omega)\) for all \(n\).

Since \(m\) is shift invariant and \(g_0(\omega)E_L(\omega) = E_L(\sigma \omega)\) for \(m\)-a.e. \(\omega\), we obtain that the distribution of \(g_0(\omega)E_L(\omega)\) is \(\nu_L\).  This shows that \(\nu_L\) is \(\mu\)-stationary as claimed.

\subsection{Proof of ergodicity}

Set \(\nu_L = \nu_0\) and suppose that \(\nu_0 = t\nu_1 + (1-t)\nu_2\) where \(\nu_i\) is stationary for \(i=1,2\) and \(t \in (0,1)\).

From \cite[Section 1]{furstenberg-glasner} the limits
\[\nu_{i,\omega} = \lim\limits_{n \to +\infty}g_{-n}^0(\omega)_*\nu_i,\]
exists for \(m\)-a.e. \(\omega\) and furthermore
\[\nu_i = \int\limits_{\Omega}\nu_{i,\omega}dm(\omega),\]
for \(i=0,1,2\).

It suffices to show that \(\nu_{0,\omega} = \nu_{1,\omega} = \nu_{2,\omega}\) for \(m\)-a.e. \(\omega\) to conclude that \(\nu_L\) is extremal as claimed.

For this purpose we note that \(E_L(\sigma^{-n}\omega)\) is independent from \(g_{-1}(\omega),\ldots,g_{-N}(\omega)\).  It follows that the conditional distribution of \( g_{-n}^0(\omega)E_L(\sigma^{-n}\omega)\) given \(g_{-1}(\omega),\ldots,g_{-n}(\omega)\) is 
\((g_{-n}^0(\omega))_*\nu_0.\)

Since \(m\)-a.e.  we have \(E_L(\omega) = g_{-n}^0(\omega)E_L(\sigma^{-n}\omega)\) the limit \(\nu_{0,\omega}\) must be the conditional distribution of \(E_L(\omega)\) given \(g_{-1}(\omega),g_{-2}(\omega),\ldots\) which is \(\delta_{E_L(\omega)}\).

Hence we have shown that \(\nu_{0,\omega} = \delta_{E_L(\omega)}\) for \(m\)-a.e. \(\omega\).   Since \(\nu_{0,\omega} = t\nu_{1,\omega} + (1-t)\nu_{1,\omega}\) it follows that \(\nu_{0,\omega} = \nu_{1,\omega} = \nu_{2,\omega}\) for \(m\)-a.e. \(\omega\), which concludes the proof.

\subsection{Proof of entropy estimate}

Let  \(\tF\) denote the space of full flags.  If \(\tmu\) is a probability on \(G\) with finite first moment, and \(\tnu\) is a \(\tmu\)-stationary measure on \(\tF\), we define the Lyapunov exponents of \((\tmu,\tnu)\) so that 
\[\lambda_1(\tmu,\tnu) + \cdots + \lambda_i(\tmu,\tnu) = \int\limits_{G}\int\limits_{\tF}\log |\det_{U_i(x)}(g)| d\tnu(x) d\tmu(g),\]
for \(i = 1,\ldots,d\), where \(|\det_S(g)|\) is the Jacobian of \(g\) restricted to the subspace \(S\), and \(U_i(x)\) denotes the \(i\)-dimensional subspace of \(x\).

Given a left filtration \(L\) we denote by \(\pi_L:\tF \to \F_L\) the natural projection.  For convenience we set \(L(0)\) to be the empty set and \(d(L(0)) = 0\).

\begin{lemma}[Lifts of dynamical measures]\label{stationaryliftlemma}
Let \(L_1\) be the finest left filtration and \(\tnu\) be a \(\mu\)-stationary probability on \(\tF\).   If for all \(i \in \lbrace 1,\ldots,N\rbrace\) one has \(\lambda_k(\mu,\tnu) = \chi_i\) for all \(k \in \lbrace d(L_1(i-1))+1,\ldots,d(L_1(i))\rbrace\) then \((\pi_{L_1})_*\tnu = \nu_{L_1}\).
\end{lemma}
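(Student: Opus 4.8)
The plan is to show that any $\mu$-stationary measure $\tnu$ on $\tF$ whose Lyapunov spectrum matches that of $\mu$ (grouped according to the multiplicities $d_i$) must project onto $\nu_{L_1}$. The key observation is that the dynamical stationary measure $\nu_{L_1}$ on the finest flag space is the \emph{unique} stationary measure that ``follows the Oseledets flag''; the hypothesis on the Lyapunov exponents of $(\mu,\tnu)$ is exactly what forces $\tnu$ to do this. The first step is to invoke the martingale convergence / boundary map machinery already used in the ergodicity proof above: by \cite{furstenberg-glasner}, for $m$-a.e.\ $\omega$ the limit $\tnu_\omega = \lim_{n\to+\infty} (g_{-n}^0(\omega))_*\tnu$ exists, is a.e.\ the conditional distribution of a $\tF$-valued random flag given $g_{-1},g_{-2},\ldots$, and $\tnu = \int_\Omega \tnu_\omega\,dm(\omega)$.

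Next I would compute the Lyapunov exponents of $(\mu,\tnu)$ in terms of the limiting measures $\tnu_\omega$. Using the cocycle identity for $\log|\det_{U_i(x)}(g)|$ and the stationarity, the quantity $\lambda_1(\mu,\tnu)+\cdots+\lambda_k(\mu,\tnu)$ can be realized as an asymptotic growth rate $\lim_{n\to\infty}\frac1n\int \log|\det_{U_k(x)}(g_{-n}^0(\omega))|\,d\tnu_\omega(x)\,dm(\omega)$ (or the analogous forward expression). Comparing this growth rate to the Oseledets growth rates $\chi_i$ and using the hypothesis that $\lambda_k(\mu,\tnu)=\chi_i$ for $k$ in the block $\{d(L_1(i-1))+1,\ldots,d(L_1(i))\}$, I would argue that for the integral to achieve the maximal possible growth rate $\chi_1+\cdots+\chi_i$ at each level $i$, the $d(L_1(i))$-dimensional subspace $U_{d(L_1(i))}(x)$ must, under the $\tnu_\omega$-typical $x$, align with the fastest-growing Oseledets subspace $E_1(\sigma^{-n}\omega)\oplus\cdots\oplus E_i(\sigma^{-n}\omega)$; the angle estimate \eqref{Oseledets} guarantees that any positive angle would produce a strictly sub-maximal exponent, so $\tnu_\omega$ concentrates on flags whose $i$-th step is $(E_1\oplus\cdots\oplus E_i)(\omega)$ for every $i$, i.e.\ $(\pi_{L_1})_*\tnu_\omega = \delta_{E_{L_1}(\omega)}$.

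Finally, integrating over $\omega$ gives $(\pi_{L_1})_*\tnu = \int_\Omega \delta_{E_{L_1}(\omega)}\,dm(\omega) = \nu_{L_1}$, which is the claim. The main obstacle I anticipate is making the ``maximal growth rate forces alignment'' step fully rigorous: one must rule out the possibility that $\tnu_\omega$ spreads mass over flags misaligned with the Oseledets filtration but still achieves the right \emph{average} exponent through cancellation between over- and under-performing directions. The fix is that the relevant functional is concave/subadditive in the appropriate sense — the $k$-dimensional volume growth $\frac1n\log|\det_{U_k}(g_{-n}^0(\omega))|$ is bounded above by $\chi_1+\cdots+\chi_i$ for \emph{every} $k$-plane (not just on average), with equality in the limit only for the Oseledets plane — so equality of the integrals forces $\tnu_\omega$-a.e.\ equality, with no room for cancellation. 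This pointwise upper bound, together with Fatou's lemma and the integrability supplied by the finite first moment assumption, closes the argument.
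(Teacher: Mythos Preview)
Your proposal is correct and follows essentially the same route as the paper. The paper streamlines your argument by passing to an extension of $(\Omega,m)$ carrying a random flag $\tE$ with conditional law $\tnu_\omega$, so that Birkhoff's theorem applied to $\omega \mapsto \log|\det_{\tE_i(\omega)}g_0(\omega)|$ directly equates the integral (which is $d_1\chi_1+\cdots+d_i\chi_i$ by the hypothesis on $\lambda_k(\mu,\tnu)$) with the a.e.\ backward growth rate; your pointwise upper bound from Oseledets then forces a.e.\ equality and hence alignment, exactly as you anticipated.
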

\begin{proof}
 Let \(\tnu_{\omega} = \lim\limits_{n \to +\infty}g_{-n}^0(\omega)_*\tnu\) and suppose, by taking an extension of \((\Omega,m)\), that there exists \(\tE:\Omega \to \tF\) whose conditional distribution given \((g_n(\omega))_{n \in \Z}\) is \(\tnu_{\omega}\) for \(m\)-a.e. \(\omega\).

 Fix \(i \in \lbrace 1,\ldots,N\rbrace\), set \(k = d_1+\cdots+ d_i\) and \(\tE_i(\omega) = U_k(\tE(\omega))\).  We claim that \(\tE_i(\omega) = (E_1 \oplus \cdots \oplus E_i)(\omega)\) for \(m\)-a.e. \(\omega \in \Omega\).  This would prove the lemma since \(\tE\) has distribution \(\tnu\) and we would have \(\pi_{L_1}(\tE(\omega)) = E_{L_1}(\omega)\) for \(m\)-a.e. \(\omega \in \Omega\).
 
 To establish the claim, we first observe that by Birkhoff's theorem
 \[\int\limits_{\Omega}-\lim\limits_{n \to +\infty}\frac{1}{n}\log |\det_{\tE_i(\omega)}g_{-n}^0(\om)^{-1}|dm(\omega) = \int\limits_{\Omega} \log |\det_{\tE_i(\omega)}g_0(\omega)|dm(\omega).\]

 However \(m\)-a.e. one has
 \[\limsup\limits_{n \to +\infty} -\frac{1}{n}\log |\det_{S}(g_{-n}^0(\om)^{-1})| \le d_1\chi_1 + \cdots + d_i \chi_i,\]
 for all \(d_1+\cdots+d_i\)-dimensional subspaces \(S \subset \R^d\).
 
 Therefore
\[\lim\limits_{n \to +\infty}\frac{1}{n}\log |\det_{\tE_i(\omega)}g_{-n}^0(\om)^{-1}| = -(d_1\chi_1+\cdots +d_i\chi_i),\]
so that \(\tE_i(\omega) = E_1(\omega) \oplus \cdots \oplus E_i(\omega)\) for \(m\)-a.e. \(\omega \in \Omega\).   Which concludes the proof.
\end{proof}

To prove Theorem \ref{finiteentropytheorem} we consider a sequence \(\tmu_n\) of probabilities obtained by convolving \(\mu\) with the uniform measure on the ball of radius \(\frac{1}{n}\) around the identity in \(\SO(d)\).

\begin{lemma}[Continuous stationary measures]\label{continuousmulemma}
For each \(n\) there is a unique \(\tmu_n\)-stationary measure \(\tnu_n\) on \(\tF\).   Furthermore \(\tnu\) is absolutely continuous with respect the rotationally invariant probability and for any left filtration \(L\) one has
 \[\kappa(\tmu_n,(\pi_L)_*\tnu_n) = \sum\limits_{j \notin L(i)} \sum\limits_{\substack{d(L(i-1)) < k \le d(L(i))\\ d(L(j-1)) < l \le d(L(j))}}\lambda_k(\tmu_n,\tnu_n) - \lambda_l(\tmu_n,\tnu_n).\]
\end{lemma}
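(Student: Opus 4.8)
The plan is to settle uniqueness and absolute continuity quickly, and then spend the real effort on the entropy formula; throughout write $\tmu_n = \mu * \rho_n$, where $\rho_n$ is the normalized restriction of Haar measure on $\SO(d)$ to the ball of radius $1/n$ about the identity.

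\emph{Uniqueness and absolute continuity.} The closed subgroup of $\SL_d(\R)$ generated by $\operatorname{supp}\tmu_n$ contains $\operatorname{supp}\rho_n$, a neighbourhood of the identity in $\SO(d)$, hence all of $\SO(d)$, so it is either $\SO(d)$ itself (the case $\operatorname{supp}\mu \subseteq \SO(d)$, in which every assertion is trivial: $\tnu_n$ is the rotation-invariant probability, all $\lambda_k$ vanish, and both sides of the identity are $0$) or it is $\SL_d(\R)$; in the latter case the classical theory of stationary measures on flag varieties of semisimple groups provides a unique $\tmu_n$-stationary probability $\tnu_n$ on $\tF$. For absolute continuity, observe that for each $x \in \tF$ the orbit map $k \mapsto kx$, $\SO(d) \to \tF$, is a covering map (the action is transitive with finite stabilizers), and so carries the bounded-density measure $\rho_n$ to a measure on $\tF$ that is absolutely continuous with respect to the rotation-invariant probability $\lambda$, with density bounded by a constant depending only on $n$. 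Hence $\eta_n := \rho_n * \tnu_n = \int_{\tF}(k\mapsto kx)_*\rho_n\,d\tnu_n(x)$ has bounded density, and since $\tnu_n = \tmu_n * \tnu_n = \int_G g_*\eta_n\,d\mu(g)$ is an average of absolutely continuous measures, $\tnu_n$ is absolutely continuous with respect to $\lambda$. Consequently, for each left filtration $L$, $(\pi_L)_*\tnu_n$ is absolutely continuous with respect to $(\pi_L)_*\lambda$; write $\phi$ for its density.

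\emph{The entropy formula.} Fix $L$, list its elements as $\emptyset = I_0 \subsetneq I_1 \subsetneq \cdots \subsetneq I_r = \{1,\ldots,N\}$, and set $\beta_a = \{d(I_{a-1})+1,\ldots,d(I_a)\}$, so that the blocks $\beta_1,\ldots,\beta_r$ partition $\{1,\ldots,d\}$. For $x \in \F_L$ the tangent space decomposes $\SO(d)$-equivariantly into the pieces $\operatorname{Hom}\big(x_{I_a}/x_{I_{a-1}},\, x_{I_b}/x_{I_{b-1}}\big)$ over $a < b$, and a direct computation (checked against the case $\F_L = \PP^{d-1}$, where it reduces to the familiar $-d\log\|gv\|/\|v\|$) shows that $g$ acts on this piece with log-Jacobian $|\beta_a|\,\delta_b(g,x) - |\beta_b|\,\delta_a(g,x)$, where $\delta_a(g,x) := \log|\det_{x_{I_a}}(g)| - \log|\det_{x_{I_{a-1}}}(g)|$ is the log-Jacobian of $g$ on the $a$-th graded piece. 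Using $\sum_a \delta_a = \log|\det g| = 0$ and combining with the density $\phi$, we get
\[
\log\frac{d\,g_*(\pi_L)_*\tnu_n}{d(\pi_L)_*\tnu_n}(gx) \;=\; \log\phi(x) - \log\phi(gx) + \sum_{a<b}\big(|\beta_b|\,\delta_a(g,x) - |\beta_a|\,\delta_b(g,x)\big).
\]
Integrating against $d(\pi_L)_*\tnu_n(x)\,d\tmu_n(g)$: the two $\log\phi$ terms cancel by $\tmu_n$-stationarity of $(\pi_L)_*\tnu_n$, leaving
\[
\kappa(\tmu_n,(\pi_L)_*\tnu_n) \;=\; \sum_{a<b}\big(|\beta_b|\,J_a - |\beta_a|\,J_b\big), \qquad J_a := \int_G\!\!\int_{\F_L}\!\delta_a(g,x)\,d(\pi_L)_*\tnu_n(x)\,d\tmu_n(g).
\]
Finally, since $x_{I_a}$ is the $d(I_a)$-dimensional subspace of the flag and $(\pi_L)_*\tnu_n$ is the image of $\tnu_n$, the defining relation of the Lyapunov exponents gives $\int_G\!\int_{\tF}\log|\det_{x_{I_a}}(g)|\,d\tnu_n\,d\tmu_n = \lambda_1(\tmu_n,\tnu_n)+\cdots+\lambda_{d(I_a)}(\tmu_n,\tnu_n)$, hence $J_a = \sum_{k \in \beta_a}\lambda_k(\tmu_n,\tnu_n)$. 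Substituting, $\kappa(\tmu_n,(\pi_L)_*\tnu_n) = \sum_{a<b}\sum_{k\in\beta_a}\sum_{l\in\beta_b}(\lambda_k - \lambda_l)$, which is exactly the asserted formula: in the double sum over $j \notin L(i)$, the inner sum over $k$ is non-empty only when $i$ is the least index of its block (and likewise for $j$ and $l$), so the surviving terms are indexed by pairs of blocks $\beta_a,\beta_b$ with $a<b$ and contribute $\sum_{k\in\beta_a}\sum_{l\in\beta_b}(\lambda_k - \lambda_l)$.

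\emph{Main obstacle.} The genuinely delicate point is the cancellation of the two $\log\phi$ terms, i.e. the identity $\int_G\!\int_{\F_L}\log\phi(gx)\,d(\pi_L)_*\tnu_n(x)\,d\tmu_n(g) = \int_{\F_L}\log\phi\,d(\pi_L)_*\tnu_n$, which is only formal unless $\log\phi$ is integrable against $(\pi_L)_*\tnu_n$. Its negative part is automatically integrable---$(\pi_L)_*\tnu_n$ is a probability absolutely continuous with respect to $(\pi_L)_*\lambda$ and $t\mapsto -t\log t$ is bounded on $(0,1]$---but its positive part is not controlled by the construction, since the density of $\tnu_n$ need not be bounded when $\mu$ has only a first moment. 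This is where one must invoke (or reprove) the classical fact that an absolutely continuous stationary measure on a flag variety has finite Furstenberg entropy under a first-moment hypothesis, which simultaneously shows $\kappa(\tmu_n,(\pi_L)_*\tnu_n) < \infty$ and licenses the Fubini step. The remaining ingredients---the Jacobian computation on $\F_L$ and the bookkeeping identifying $J_a$ with a block of Lyapunov exponents---are routine.
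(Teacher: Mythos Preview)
Your approach is essentially the one the paper invokes by citing \cite{ll1}: uniqueness and absolute continuity come from the convolution with $\rho_n$, and the entropy formula follows by computing the Jacobian of the $G$-action on $\F_L$ with respect to the rotation-invariant measure and integrating. Your Jacobian computation on the pieces $\operatorname{Hom}(x_{I_a}/x_{I_{a-1}},x_{I_b}/x_{I_{b-1}})$ and the bookkeeping matching blocks to the Lyapunov exponents are correct, as is the dichotomy on the closed group generated by $\operatorname{supp}\tmu_n$ (the normalizer of $\SO(d)$ in $\SL_d(\R)$ is $\SO(d)$ itself, so any closed subgroup strictly containing $\SO(d)$ is all of $\SL_d(\R)$).

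One refinement regarding your ``main obstacle'': knowing $\kappa(\tmu_n,(\pi_L)_*\tnu_n)<\infty$ does not by itself give $\log\phi\in L^1((\pi_L)_*\tnu_n)$, which is what the cancellation actually needs. But the convolution structure you already set up resolves this directly. Since $\eta_n=\rho_n*\tnu_n$ has density bounded by some $M$ on $\tF$, the same holds for $(\pi_L)_*\eta_n$; then by convexity of relative entropy in the first argument and the change of variables on $\F_L$,
\[
\int\phi\log\phi\,d(\pi_L)_*\lambda \;\le\; \int_G D\big(g_*(\pi_L)_*\eta_n\,\big\|\,(\pi_L)_*\lambda\big)\,d\mu(g)\;\le\; \log M + C\int_G\log\|g\|\,d\mu(g) < \infty,
\]
using that the log-Jacobian on $\F_L$ is $O(\log\|g\|)$ and the first-moment hypothesis on $\mu$. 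This gives $\log\phi\in L^1((\pi_L)_*\tnu_n)$, and then the stationarity identity $\int\!\!\int\log\phi(gx)\,d(\pi_L)_*\tnu_n\,d\tmu_n=\int\log\phi\,d(\pi_L)_*\tnu_n$ is legitimate. With this in place your argument is complete and matches the paper's cited computation.
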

\begin{proof}
 The uniqueness of \(\tnu_n\) and its absolute continuity with respect to the rotationally invariant probability is \cite[Lemma 5.2]{ll1}.   The formula for entropy of an absolutely continuous stationary measure is the calculation following \cite[Proposition 5.8]{ll1}.
\end{proof}

Let \(\tnu_n\) be the unique \(\tmu_n\)-stationary measure on \(\tF\) given by Lemma \ref{continuousmulemma}.  By taking a subsequence we suppose that \(\tnu = \lim\limits_{n \to +\infty}\tnu_n\) exists.

Let \(s_1(g) \ge \cdots \ge s_d(g)\) denote the singular values of a matrix in \(G\).   By construction the integrals 
\[\int\limits_{|\log(s_i(g))| > C}|\log s_i(g)| d\tmu_n(g),\]
go to zero uniformly in \(n\) when \(C \to +\infty\).  This implies (see \cite[Lemma 5.4]{ll1}) that
\[\lambda_k(\mu,\tnu) = \lim\limits_{n \to +\infty}\lambda_k(\tmu_n,\tnu_n).\]

On the other hand by \cite[Theorem 1.1]{froyland} (they deal with a more difficult perturbation where there is worse control on the smallest singular value, their argument goes through for our case) we have
\[\lim\limits_{n \to +\infty}\lambda_k(\tmu_n,\tnu_n) = \chi_i\]
for all \(k \in \lbrace d(L(i-1)+1,\ldots, d(L(i)))\rbrace\).

In particular this implies, by Lemma \ref{stationaryliftlemma} that \((\pi_L)_*\tnu = \nu_L\) for all left filtrations \(L\).

By semi-continuity of Furstenberg entropy we now obtain for all left filtrations \(L\) that 
\[\kappa(\mu,\nu_L) = \kappa(\mu,(\pi_L)_*\tnu) \le \liminf\limits_{n \to +\infty}\kappa(\tmu_{n},(\pi_L)_*\tnu_{n}) = \sum\limits_{j \notin L(i)}d_id_j(\chi_i - \chi_j),\]
which concludes the proof.

\section{Coordinates on configuration bundles}\label{bundlestructure}

In what follows we fix admissible topologies \(T \prec T'\).

We let \(\X_{T,T'}\) denote the space \(\X_{T}\) endowed with the projection \(\pi_{T,T'}\) which we will show in this section is a fiber bundle.

We denote by \(\X_{T,T'}^{x'} = \pi_{T,T'}^{-1}(x')\) the fiber over a point \(x' \in \X_{T'}\).

\subsection{Compatible splittings}

We say a sequence of subspaces \(V = (V_1,\ldots, V_N)\) is a splitting compatible with \(x' \in \X_{T'}\) if
\begin{equation}\label{compatiblesplittingequation}
  x'_{I} = \bigoplus\limits_{i \in I}V_i,
\end{equation}
for all \(I \in T'\).

Notice that in particular this implies \(\dim(V_i) = d_i\) for all \(i\) and \(\R^d = \bigoplus\limits_{i = 1}^N V_i\).

\begin{lemma}[Perpendicular compatible splitting]\label{perpcompatiblesplitting}
 Setting
\begin{equation}
  V_i(x') = x'_{T'(i)} \cap \left(x'_{T'(i)\setminus \lbrace i\rbrace}\right)^{\perp},
\end{equation}
 yields a compatible splitting for each \(x' \in \X_{T'}\).
\end{lemma}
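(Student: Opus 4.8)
The plan is to verify the three defining conditions of a configuration in $\X_{T'}$ for the proposed sequence $V(x') = (V_1(x'),\ldots,V_N(x'))$, namely that $\dim V_i(x') = d_i$ and that $x'_I = \bigoplus_{i\in I}V_i(x')$ for all $I \in T'$; the second of these is condition \eqref{compatiblesplittingequation}. The key structural fact I would use repeatedly is that for an admissible topology the atom $T'(i)$ is contained in $\{i,i+1,\ldots,N\}$, so that if $j \in T'(i)$ and $j \neq i$ then $j > i$, and hence $T'(j) \subset T'(i)$ (since $j \in T'(i)$ forces $T'(j) \subset T'(i)$ by minimality of the atom). In particular $T'(i)\setminus\{i\} = \bigcup_{j \in T'(i), j>i} T'(j)$, so $x'_{T'(i)\setminus\{i\}} = \sum_{j\in T'(i), j>i} x'_{T'(j)}$ is itself a subspace occurring in the configuration $x'$ (it equals $x'_{T'(i)\setminus\{i\}}$, which is an atom-union and hence in $T'$, using the axiom $x'_{I\cup J} = x'_I + x'_J$). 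Thus $V_i(x') = x'_{T'(i)} \cap (x'_{T'(i)\setminus\{i\}})^\perp$ is the orthogonal complement, inside $x'_{T'(i)}$, of the subspace $x'_{T'(i)\setminus\{i\}}$, which has codimension $d(T'(i)) - d(T'(i)\setminus\{i\}) = d_i$ in it; this gives $\dim V_i(x') = d_i$.

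Next I would establish $x'_I = \bigoplus_{i\in I}V_i(x')$ for every $I \in T'$. I would first prove the special case $x'_{T'(i)} = \bigoplus_{j \in T'(i)} V_j(x')$ by downward induction on $i$ (equivalently, induction on the size of the atom, smallest atoms first). The base case is an atom $T'(i)$ that is a singleton $\{i\}$, where the claim is trivial. For the inductive step, by the observation above $T'(i)\setminus\{i\} = \bigcup_{j\in T'(i),j>i} T'(j)$, and each such $T'(j)$ is a strictly smaller atom, so by induction $x'_{T'(j)} = \bigoplus_{k\in T'(j)}V_k(x')$; summing over $j$ and using that the atoms $T'(j)$ for $j \in T'(i)\setminus\{i\}$ cover $T'(i)\setminus\{i\}$, I get $x'_{T'(i)\setminus\{i\}} = \sum_{j \in T'(i)\setminus\{i\}} V_j(x')$ — but I need this sum to be direct, which follows by a dimension count since $\dim x'_{T'(i)\setminus\{i\}} = d(T'(i)\setminus\{i\}) = \sum_{j \in T'(i)\setminus\{i\}} d_j$. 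Then $x'_{T'(i)} = V_i(x') \oplus x'_{T'(i)\setminus\{i\}} = \bigoplus_{j\in T'(i)}V_j(x')$, where directness of the first decomposition is by the very definition of $V_i(x')$ as an orthogonal complement inside $x'_{T'(i)}$. Having this for atoms, the general $I \in T'$ is a union of atoms $I = \bigcup_{i\in I}T'(i)$, so $x'_I = \sum_{i \in I} x'_{T'(i)} = \sum_{i\in I}\bigoplus_{j\in T'(i)}V_j(x')$; again a dimension count ($\dim x'_I = d(I) = \sum_{i\in I}d_i$) upgrades this sum to the direct sum $\bigoplus_{i\in I}V_i(x')$.

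The main obstacle I anticipate is purely bookkeeping: making sure that the various subspaces $x'_{T'(i)\setminus\{i\}}$, $x'_{T'(i)\cap T'(i')}$, etc. that appear really are among the $x'_I$ with $I \in T'$ (so that the configuration axioms $x'_{I\cup J} = x'_I + x'_J$ and $x'_{I\cap J} = x'_I\cap x'_J$ apply), and organizing the induction on atoms so that admissibility $T'(i)\subset\{i,\ldots,N\}$ is what makes it well-founded. Once the indexing is set up correctly, every step reduces to an elementary dimension count, so there is no analytic or deep algebraic content. I would close by remarking that the perpendicular splitting $V(x')$ depends continuously (indeed real-analytically away from degeneracies, and locally bilipschitz) on $x'$, which is the form in which this lemma will be used to build coordinates on the bundle $\X_{T,T'}$ in the next subsections.
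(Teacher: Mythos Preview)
Your proposal is correct and follows essentially the same approach as the paper: both establish $\dim V_i(x')=d_i$ from the orthogonal-complement definition, then prove \eqref{compatiblesplittingequation} by downward induction (the paper inducts on $\min(I)$ directly for all $I\in T'$, while you first treat atoms and then pass to general $I$ by unions), with directness of each sum coming from a dimension count. The only difference is organizational.
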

\begin{proof}
  We first observe that since \(x_{T'(i)} = V_i(x') \oplus x_{T'(i)} \setminus \lbrace i\rbrace\) have \(\dim(V_i(x')) = d_i\) for all \(i\).

 We proceed by induction on \(\min(I)\) begining with \(\min(I) = N\) where we trivially have \(V_{N}(x') = x'_{\lbrace N\rbrace}\).  

 Assume that \(\min(I) = i\) and that equation \ref{compatiblesplittingequation} holds for all \(J \in T\) with \(\min(J) > i\). We deduce that
  \begin{align*}
  x'_I &= x'_{T'(i)} + x'_{I \setminus \lbrace i\rbrace} = V_i(x') \oplus x'_{T'(i) \setminus \lbrace i\rbrace} + x'_{I \setminus \lbrace i\rbrace}
   \\ &= V_i(x') \oplus \bigoplus\limits_{j \in T'(i) \setminus \lbrace i\rbrace} V_j(x') + \bigoplus\limits_{j \in I \setminus \lbrace i\rbrace}V_j(x')
   \\ &= V_i(x') + \bigoplus\limits_{j \in I \setminus \lbrace i \rbrace}V_j(x').
\end{align*}
Since the dimensions of the subspaces on the right-hand side add up to \(d(I)\), the sum must be direct as claimed.
\end{proof}

\subsection{Vector spaces of nilpotent mappings}

Given \(T \prec T'\), \(x' \in \X_{T'}\) and \(V\) a compatible splitting we denote by \(\nil_{T,T'}(V)\) the space of linear mapping \(f:\R^d \to \R^d\) such that
\begin{equation}\label{defineVequation}
  f\left(V_i\right) \subset \bigoplus\limits_{j \in T'(i) \setminus T(i)} V_j,
\end{equation}
for \(i = 1,\ldots,N\).

\begin{lemma}\label{dimofnilspaceslemma}
  The dimension of the vector space \(\nil_{T,T'}(V)\) is
  \[\dim(\nil_{T,T'}(V)) = \sum\limits_{i = 1}^N \sum\limits_{j \in T'(i) \setminus T(i)} d_i d_j.\]
\end{lemma}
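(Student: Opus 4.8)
The plan is to observe that $\nil_{T,T'}(V)$ is defined by a system of linear constraints that decouples across the blocks of the splitting $V = (V_1,\ldots,V_N)$, so its dimension can be computed blockwise. Since $\R^d = \bigoplus_{i=1}^N V_i$, any linear map $f:\R^d \to \R^d$ is determined by its restrictions $f|_{V_i}$ for $i = 1,\ldots,N$, and these restrictions may be prescribed independently. Thus $\mathrm{Hom}(\R^d,\R^d) \cong \bigoplus_{i=1}^N \mathrm{Hom}(V_i,\R^d)$, and the defining condition \eqref{defineVequation} of $\nil_{T,T'}(V)$ imposes, separately for each $i$, that $f|_{V_i}$ take values in the subspace $W_i := \bigoplus_{j \in T'(i) \setminus T(i)} V_j$.

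The key steps, in order, are then: first, record this block decomposition and note that under it $\nil_{T,T'}(V) \cong \bigoplus_{i=1}^N \mathrm{Hom}(V_i, W_i)$; second, compute $\dim \mathrm{Hom}(V_i,W_i) = \dim(V_i)\cdot\dim(W_i)$; third, use that $\dim(V_i) = d_i$ (which is part of the definition of a compatible splitting, as noted after \eqref{compatiblesplittingequation}, and also established in Lemma \ref{perpcompatiblesplitting}) and that the sum $W_i = \bigoplus_{j \in T'(i)\setminus T(i)} V_j$ is direct with $\dim(W_i) = \sum_{j \in T'(i)\setminus T(i)} d_j$; and fourth, sum over $i$ to obtain
\[
\dim(\nil_{T,T'}(V)) = \sum_{i=1}^N d_i \sum_{j \in T'(i)\setminus T(i)} d_j = \sum_{i=1}^N \sum_{j \in T'(i)\setminus T(i)} d_i d_j,
\]
as claimed. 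One small point to check is that $T'(i) \setminus T(i)$ is a well-defined index set and that $T(i) \subset T'(i)$ (so the difference makes sense and $W_i \subset x'_{T'(i)}$), which follows from $T \prec T'$; and that the subspaces $V_j$ for $j$ ranging over this set are among the components of the splitting, so their sum is direct inside $\bigoplus_j V_j = \R^d$.

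There is essentially no obstacle here: the only thing requiring any care is the bookkeeping that the constraint \eqref{defineVequation} genuinely decouples over $i$, i.e. that there is no interaction forcing relations between $f|_{V_i}$ and $f|_{V_{i'}}$ for $i \neq i'$ — but this is immediate because \eqref{defineVequation} is a conjunction of one condition per block with no cross terms. So the main (and only) step is the block-diagonal identification; everything after it is a dimension count of a direct sum of Hom-spaces.
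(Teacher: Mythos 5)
Your proof is correct and follows essentially the same route as the paper: the paper also decomposes $\nil_{T,T'}(V)$ as a direct sum over $i$ of the spaces of maps supported on $V_i$ with values in $\bigoplus_{j \in T'(i)\setminus T(i)} V_j$ (calling them $S_i$ and writing $f = \sum_i f\circ\pi_{V_i}$), then counts dimensions. The only cosmetic difference is that you phrase it as an isomorphism $\nil_{T,T'}(V) \cong \bigoplus_i \mathrm{Hom}(V_i,W_i)$ while the paper works with the internal direct sum $\bigoplus_i S_i$; the underlying argument is identical.
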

\begin{proof}
  Let \(S_i = \lbrace f \in \nil_{T,T'}(V):  f_{\mid V_j} = 0\text{ for all }j \neq i\rbrace\).
  This is isomorphic to the space of linear mappings from \(V_i\) to \(\bigoplus\limits_{T'(i) \setminus T(i)}V_j\) so we have
  \[\dim(S_i) = \sum\limits_{j \in T'(i)\setminus T(i)}d_id_j.\]

  Let \(\pi_{V_i}:\R^d \to V_i\) be the projection along \(\bigoplus\limits_{j \neq i}V_j\).   Given \(f \in \nil_{T,T'}(V)\) we have \(f = \sum f\circ \pi_{V_i}\) and \(f\circ \pi_{V_i} \in S_i\).
  Since \(S_i \cap \bigoplus\limits_{j \neq i}S_j = \lbrace 0\rbrace\) it follows that,
  \[\nil_{T,T'}(V) = \bigoplus\limits_{i}S_i,\]
  which concludes the proof.

\end{proof}

\subsection{Fiberwise parametrization}

Given \(T \prec T'\), \(x' \in \X_{T'}\) and a compatible splitting \(V\) we define a mapping
\[\varphi_V: \nil_{T,T'}(V) \to \X_{T,T'}^{x'}\]
by setting
\begin{equation}\label{lineartoconfiguration}
  \varphi_{V}(f)_I = \left(\id + f\right)\left(\bigoplus\limits_{i \in I}V_i(x')\right),
\end{equation}
for all \(I \in T\), where \(\id:\R^d \to \R^d\) is the identity mapping.

\begin{lemma}\label{compatiblephi}
  The sequence \(\varphi_V(f)\) defined above is a configuration in \(\X_{T,T'}^{x'}\) for each \(f \in \nil_{T,T'}(V)\).
\end{lemma}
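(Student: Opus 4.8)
The plan is to deduce the three defining properties of a configuration in \(\X_T\) — namely that \(\varphi_V(f)_I\) has dimension \(d(I)\), that \(\varphi_V(f)_{I\cup J} = \varphi_V(f)_I + \varphi_V(f)_J\), and that \(\varphi_V(f)_{I\cap J} = \varphi_V(f)_I \cap \varphi_V(f)_J\) — together with the fiber condition \(\pi_{T,T'}(\varphi_V(f)) = x'\), from the single observation that \(\id + f\) is a linear automorphism of \(\R^d\) which preserves the subspace \(\bigoplus_{i \in I}V_i(x')\) for every \(I \in T'\). The first step is to check that \(f\) is nilpotent: since \(T\) is admissible and \(T \prec T'\) we have \(i \in T(i) \subset T'(i) \subset \lbrace i, i+1, \ldots, N\rbrace\), hence \(T'(i)\setminus T(i) \subset \lbrace i+1,\ldots,N\rbrace\), and \eqref{defineVequation} says \(f(V_i) \subset \bigoplus_{j > i}V_j\). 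With respect to the ordered splitting \(\R^d = V_1 \oplus \cdots \oplus V_N\) the operator \(f\) is thus strictly upper triangular, so \(\id + f\) is unipotent; in particular it is invertible, and being injective on \(\R^d\) it restricts to an automorphism of any subspace it maps into itself.

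The second step is the fiber condition. If \(I \in T'\) then, because the finer topology \(T\) has more open sets than \(T'\), we have \(I \in T\); and for each \(i \in I\) minimality of the atom \(T'(i)\) gives \(T'(i) \subset I\). Hence \(f(V_i) \subset \bigoplus_{j \in T'(i)\setminus T(i)}V_j \subset \bigoplus_{j \in I}V_j\), and summing over \(i \in I\) shows that \(\bigoplus_{i\in I}V_i(x')\) is \(f\)-invariant; by the first step \((\id+f)\bigl(\bigoplus_{i \in I}V_i(x')\bigr) = \bigoplus_{i\in I}V_i(x') = x'_I\), the last equality being \eqref{compatiblesplittingequation}. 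Thus \(\varphi_V(f)_I = x'_I\) for all \(I \in T'\), which is precisely \(\pi_{T,T'}(\varphi_V(f)) = x'\), and in particular \(\dim \varphi_V(f)_I = d(I)\) for \(I \in T'\). For a general \(I \in T\), \(\varphi_V(f)_I = (\id+f)\bigl(\bigoplus_{i\in I}V_i(x')\bigr)\) is the image of a \(d(I)\)-dimensional subspace under an isomorphism, so it again has dimension \(d(I)\); this establishes property (1).

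For properties (2) and (3) I would use that \(T\) is a topology, so \(I \cup J, I\cap J \in T\) whenever \(I, J \in T\), together with the identities \(\bigoplus_{i \in I\cup J}V_i = \bigoplus_{i\in I}V_i + \bigoplus_{i\in J}V_i\) and \(\bigoplus_{i \in I\cap J}V_i = \bigl(\bigoplus_{i\in I}V_i\bigr) \cap \bigl(\bigoplus_{i\in J}V_i\bigr)\), the latter by uniqueness of coordinates in the direct sum \(\R^d = \bigoplus_i V_i(x')\). Applying the linear map \(\id + f\), which commutes with sums of subspaces and — being injective — with intersections, yields \(\varphi_V(f)_{I\cup J} = \varphi_V(f)_I + \varphi_V(f)_J\) and \(\varphi_V(f)_{I\cap J} = \varphi_V(f)_I \cap \varphi_V(f)_J\). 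There is no serious obstacle in this lemma; the only points needing care are the inclusion \(T'(i) \subset I\) for \(i \in I \in T'\), which is where admissibility and the topology structure enter, and the invertibility of \(\id + f\), which rests on the strictly upper triangular shape imposed by \eqref{defineVequation} — everything else is elementary linear algebra.
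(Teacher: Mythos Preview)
Your proof is correct and takes essentially the same approach as the paper's: both rest on the invertibility of \(\id + f\) (from nilpotence of \(f\)) and the invariance of \(\bigoplus_{i \in I} V_i\) under \(\id + f\) for \(I \in T'\). The paper organizes the argument by first treating the case \(f = 0\) and then pushing forward by the isomorphism \(\id + f\), whereas you verify the three configuration properties directly; this is only a cosmetic difference.
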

\begin{proof}
  First notice that \(\varphi_{V}(0)_I = \bigoplus\limits_{i \in I}V_i\) for all \(I \in T\). So in this case, it follows from equation \ref{compatiblesplittingequation} that \(\varphi_{V}(0) \in \X_{T}^{x'}\).

  For general \(f \in \nil_{T,T'}(V)\) we observe that, since \(f\) is nilpotent, \(\id+f\) is a linear isomorphism of \(\R^d\).

  Since \(\varphi_{V}(f)_I = (\id + f)(\varphi_{V}(0)_I)\) for all \(I \in T\) this suffices to show that \(\varphi_V(f)\) is a configuration in \(\X_T\).

  We now verify that \(\varphi_V(f)_{T'(i)} = x'_{T'(i)}\) for all \(i\).  Since the case \(f = 0\) has been established, it suffices to show that \(\id + f\) leaves each \(x'_{T'(i)}\) invariant.

  To do this, using the case \(f = 0\), we calculate
  \begin{align*}
    (\id+f)(x'_{T'(i)}) &= (\id+f)\left(\bigoplus\limits_{T'(i)}V_j\right) 
               \\ &\subset \bigoplus\limits_{T'(i)}V_j + f\left(\bigoplus\limits_{T'(i)} V_j\right)
               \\ &\subset \bigoplus\limits_{T'(i)}V_j + \sum\limits_{T'(i)}\bigoplus\limits_{k \in T'(j) \setminus T(j)}V_k
               \\ &= \bigoplus\limits_{T'(i)}V_j
               \\ &= x'_{T'(i)}
  \end{align*}

  Since the dimensions of the subspaces on the left and right-hand side are equal this shows the desired claim.
\end{proof}

\begin{lemma}\label{injectivephi}
  The mapping \(\varphi_V: \nil_{T,T'}(V) \to \X_{T,T'}^{x'}\) defined above is injective.
\end{lemma}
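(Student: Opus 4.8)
The plan is to test the equality \(\varphi_V(f) = \varphi_V(g)\) against the atoms \(T(i)\), which all belong to \(T\), and thereby recover the restrictions \(f_{\mid V_i}\) and \(g_{\mid V_i}\) one index at a time. Throughout I use the triangularity forced by admissibility: since \(i \in T(i) \subset T'(i) \subset \lbrace i,\ldots,N\rbrace\), we have \(T'(i)\setminus T(i) \subset \lbrace i+1,\ldots,N\rbrace\), so every \(h \in \nil_{T,T'}(V)\) satisfies \(h(V_i) \subset \bigoplus_{j>i}V_j\); in particular \(h\) is nilpotent and \(\id+h\) is invertible, as already observed in the proof of Lemma \ref{compatiblephi}.

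First I would fix \(i\) and \(v \in V_i\). Since \(v \in \bigoplus_{k\in T(i)}V_k\) and \(T(i) \in T\), the vector \((\id+f)(v)\) lies in \(\varphi_V(f)_{T(i)} = \varphi_V(g)_{T(i)} = (\id+g)\left(\bigoplus_{k\in T(i)}V_k\right)\), so there is \(u = \sum_{k\in T(i)}u_k\) with \(u_k \in V_k\) and \((\id+g)(u) = (\id+f)(v)\). Comparing \(V_i\)-components of the two sides — none of the \(u_k\) with \(k \neq i\), nor \(g(u)\), nor \(f(v)\) has a \(V_i\)-component, by the triangularity above — gives \(u_i = v\), hence \(u = v + w\) with \(w := \sum_{k\in T(i)\setminus\lbrace i\rbrace}u_k\). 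Substituting and cancelling \(v\) leaves the identity
\[w + g(w) = f(v) - g(v).\]

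The crux is to extract \(w = 0\) (and therefore \(f(v) = g(v)\)) from this identity. Here \(w\) is supported on \(T(i)\setminus\lbrace i\rbrace\) while \(f(v)-g(v)\) is supported on \(T'(i)\setminus T(i)\), and these index sets are disjoint because the second is disjoint from \(T(i)\). Applying the coordinate projection \(\Pi\) onto \(\bigoplus_{k\in T(i)}V_k\) along \(\bigoplus_{j\notin T(i)}V_j\) therefore kills the right-hand side, leaving \(w + \Pi g(w) = 0\). Because \(g\) raises indices, \(\Pi g\) maps \(\bigoplus_{k\in T(i)\setminus\lbrace i\rbrace}V_k\) into itself and strictly raises the index there, so it is nilpotent; hence \(\id + \Pi g\) is invertible on that space and \(w = 0\). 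Then \(f(v) = g(v)\), and since \(i\) and \(v \in V_i\) were arbitrary and \(\R^d = \bigoplus_i V_i\), we conclude \(f = g\).

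I expect the disentangling step in the previous paragraph to be the only real obstacle: a priori the identity \(w + g(w) = f(v)-g(v)\) couples the unknown \(w\) with the quantity of interest, and one must combine the disjointness of the supports \(T(i)\setminus\lbrace i\rbrace\) and \(T'(i)\setminus T(i)\) with the nilpotence built into \(\nil_{T,T'}(V)\) through admissibility. The remaining ingredients — the \(V_i\)-component bookkeeping and the invertibility of \(\id + h\) for triangular \(h\) — are routine.
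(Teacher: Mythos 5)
Your proof is correct and follows essentially the same strategy as the paper's: test the equality \(\varphi_V(f)=\varphi_V(g)\) against the atom \(T(i)\in T\), split the resulting vector equation along the direct-sum decomposition \(\bigoplus_k V_k\), and use the triangularity built into \(\nil_{T,T'}(V)\) to force the ``error term'' \(w\) to vanish. The paper phrases this as a proof by contradiction, choosing \(i\) to be the largest index where \(f\) and \(g\) differ, and then compresses the final step (``for the same reason this implies \(w=0\)''); you instead argue directly for every \(i\) and make the key point explicit by projecting onto \(\bigoplus_{k\in T(i)}V_k\) and observing that \(\Pi g\) restricted to \(\bigoplus_{k\in T(i)\setminus\lbrace i\rbrace}V_k\) is nilpotent, so \(\id+\Pi g\) is invertible there. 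Your version is arguably cleaner in that it avoids the (unnecessary) maximality assumption and spells out the nilpotence that the paper leaves implicit, but the two arguments are the same in substance.
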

\begin{proof}
Suppose that \(f \neq g\), let \(i\) be the largest index such that \(f\) and \(g\) do not coincide on \(V_i\), and \(v \in V_i\) be such that \(f(v) \neq g(v)\).

If \(\varphi_{V}(f) = \varphi_{V}(g)\) then in particular
\[\left(\id + f\right)\left(V_i\oplus S\right) = \left(\id +g\right)\left(V_i\oplus S\right),\]
  where \(S = \bigoplus\limits_{j \in T(i)\setminus\lbrace i\rbrace}V_j\).

Therefore there must exist \(v' \in V_i\) and \(w \in S\) such that \(v + f(v) = v' + w + g(v') + g(w)\).

By equation \ref{defineVequation}, \(f(V_i)\) and \(g(V_i)\) are complementary to \(V_i \oplus S\).  Therefore we must have \(v = v'\) and \(f(v) - g(v) = w+g(w)\).

However, for the same reason this implies \(w = 0\) and hence \(f(v) = g(v)\).  So \(\varphi_{T,T'}\) is injective as claimed.
\end{proof}

\begin{lemma}\label{surjectivephi}
  The mapping \(\varphi_V: \nil_{T,T'}(V) \to \X_{T,T'}^{x'}\) defined above is surjective.
\end{lemma}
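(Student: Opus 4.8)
The plan is to show that every configuration $x \in \X_{T,T'}^{x'}$ arises as $\varphi_V(f)$ for some $f \in \nil_{T,T'}(V)$, by constructing $f$ one summand $V_i$ at a time, working downward from $i = N$ to $i = 1$. Fix $x \in \X_{T,T'}^{x'}$, so that $x$ is a configuration in $\X_T$ with $\pi_{T,T'}(x) = x'$; in particular $x_{T'(i)} = x'_{T'(i)} = \bigoplus_{j \in T'(i)} V_j$ for all $i$, while the subspaces $x_{T(i)}$ for the finer atoms are what we must fit. The key structural observation is that, for each $i$, since $T(i) \subset T'(i)$ and $T'(i) = T(i) \sqcup (T'(i)\setminus T(i))$, the subspace $\bigoplus_{j \in T'(i)\setminus T(i)} V_j$ is a fixed complement inside $x'_{T'(i)}$ to the ``diagonal'' copies of the $V_j$, $j \in T(i)$, and so every $d_i$-dimensional subspace of $x'_{T'(i)}$ that projects isomorphically onto $V_i$ along $\bigoplus_{j \in T(i)\setminus\{i\}} V_j \oplus \bigoplus_{j\in T'(i)\setminus T(i)} V_j$ is the graph of a unique linear map $V_i \to \bigoplus_{j \in T(i)\setminus\{i\}} V_j \oplus \bigoplus_{j \in T'(i)\setminus T(i)} V_j$.

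The induction runs as follows. Suppose we have already determined $f$ on $V_k$ for all $k > i$ (the base case $i = N$: the atom $T(N)$ is $\{N\}$ since $T$ is admissible, and $x_{\{N\}}$ is a $d_N$-dimensional subspace of $x'_{T'(N)}$; one checks it is the graph of a map $V_N \to \bigoplus_{j \in T'(N)\setminus T(N)} V_j$, which we take as $f|_{V_N}$). For the inductive step, consider the atom $K = T(i)$. Using the already-constructed part of $f$ we know $(\id + f)(V_j)$ for $j \in K\setminus\{i\}$, and hence $(\id+f)(\bigoplus_{j \in K\setminus\{i\}} V_j)$, which by injectivity (Lemma \ref{injectivephi}, or rather its proof) is a complement inside $x_K$ of dimension $d(K) - d_i$. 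Because $x$ is a genuine configuration, $x_K$ is $d(K)$-dimensional and sits inside $x_{T'(i)} = x'_{T'(i)}$; I would show $x_K$ meets $V_i$'s ambient complement transversally, so $x_K = (\id+f)(\bigoplus_{j\in K\setminus\{i\}}V_j) \oplus W$ for a unique $d_i$-dimensional $W$ that is a graph over $V_i$ with values in $\bigoplus_{j \in T'(i)\setminus T(i)} V_j$; define $f|_{V_i}$ to be the corresponding graphing map, which indeed lands in $\bigoplus_{j\in T'(i)\setminus T(i)} V_j$ as required by \eqref{defineVequation}. One then verifies $(\id+f)(\bigoplus_{j \in K} V_j) = x_K$.

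After $f$ is defined on all of $\R^d$ this way, it remains to check that $\varphi_V(f) = x$, i.e. that $(\id+f)(\bigoplus_{i \in I} V_i) = x_I$ for \emph{every} $I \in T$, not just the atoms. This follows by writing a general $I \in T$ as a union of atoms $I = \bigcup_{i \in I} T(i)$ and using the configuration axioms $x_{I \cup J} = x_I + x_J$ on the $x$ side together with linearity of $\id + f$ on the $\varphi_V$ side, reducing the general case to the atoms already handled; the dimension count $\dim \bigoplus_{i\in I} V_i = d(I) = \dim x_I$ then upgrades the inclusion to equality. The main obstacle I anticipate is the transversality claim in the inductive step --- namely that $x_K \cap \big(\bigoplus_{j \neq i} V_j\big) = (\id+f)(\bigoplus_{j \in K \setminus\{i\}} V_j)$, so that the new graph direction $W$ exists and is unique. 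This has to be extracted from the configuration axioms for $x$ (the intersection axiom $x_{I \cap J} = x_I \cap x_J$ applied to appropriate $I, J \in T$, e.g. $K$ and $T'(i)\setminus\{i\}$ suitably expressed, or an auxiliary atom) combined with the inductive control on $f|_{V_j}$, $j > i$; once this transversality is in hand the rest is linear algebra bookkeeping parallel to the proof of Lemma \ref{injectivephi}.
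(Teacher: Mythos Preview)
Your proposal is correct and follows essentially the same approach as the paper's proof: both construct $f$ by downward induction on $i$, defining $f|_{V_i}$ at each step as the unique linear map $V_i \to \bigoplus_{j \in T'(i)\setminus T(i)} V_j$ whose graph, together with the already-handled part $(\id+f)(\bigoplus_{j\in T(i)\setminus\{i\}}V_j) = x_{T(i)\setminus\{i\}}$, spans $x_{T(i)}$. The paper packages the inductive hypothesis slightly differently (its ``$i$-good'' condition requires $(\id+f)(\bigoplus_{j\in J}V_j)=x_J$ for \emph{all} $J\in T$ with $\min J>i$, not just atoms, so the extension to general $I\in T$ is absorbed into the induction rather than done at the end), and it carries out explicitly the transversality step you flag as the main obstacle: it proves the decomposition $\bigl(\bigoplus_{j\in T'(i)\setminus T(i)}V_j\bigr)\oplus x_{T(i)\setminus\{i\}}=x_{T'(i)\setminus\{i\}}$, whence $V_i\oplus S\oplus x_{T(i)\setminus\{i\}}=x_{T'(i)}$ and $x_{T(i)}\cap(V_i\oplus S)$ is $d_i$-dimensional, using exactly the intersection axiom $x_{T(i)}\cap x_{T'(i)\setminus\{i\}}=x_{T(i)\setminus\{i\}}$ you point to.
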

\begin{proof}
  Let \(x \in \X_{T}^{x'}\), we say \(f \in \nil_{T,T'}(V)\) is \(i\)-good if
\[\left(\id + f_i\right)\left(\bigoplus\limits_{j \in J}V_j(x')\right) = x_{J},\]
for all \(J \in T\) with \(\min(J) > i\). 

Notice that \(f = 0\) is \(N-1\)-good.

Suppose that \(f\) is \(i\)-good we will show that there exists \(g \in \nil_{T,T'}(V)\) with \(g_{\mid V_j} = 0\) for all \(j \neq i\) such that \(f+g\) is \((i-1)\)-good.

To see this consider the subspace
\[S = \bigoplus\limits_{j \in T'(i) \setminus T(i)}V_j.\]

We claim that
\[S \oplus x_{T(i) \setminus \lbrace i\rbrace} = x_{T'(i) \setminus \lbrace i\rbrace}.\]

To see this first notice that by equation \ref{compatiblesplittingequation} we have
\[S \oplus \bigoplus\limits_{j\in T(i) \setminus \lbrace i\rbrace} V_j= x'_{T'(i) \setminus \lbrace i\rbrace}.\]

Now, since \(\id + f\) is a linear isomorphism leaving \(x'_{T'(i) \setminus \lbrace i\rbrace}\) invariant it suffices to show that \((\id+f)(V_j) \cap S = \lbrace 0\rbrace\) for each \(j \in T(i) \setminus \lbrace i\rbrace\).

Fix \(j \in T(i) \setminus \lbrace i\rbrace\) and \(v \in V_j\).  By hypothesis \(f(v) \in \bigoplus\limits_{k \in T'(j) \setminus T(j)}V_k\) so unless \(v = 0\) one has that \(v + f(v)\) has a non-zero component in \(V_j\).  Since \(v + f(v) \in x'_{T'(i) \setminus \lbrace i\rbrace} = \bigoplus\limits_{k \in T'(i)\setminus \lbrace i\rbrace}V_k\) we conclude that if \(v+f(v) \in S\) then \(v = 0\).  Which proves the claim.

From the claim it follows that
\[V_i \oplus S \oplus x_{T(i) \setminus \lbrace i\rbrace} = x_{T'(i)}.\]

Since \(x_{T(i)} \cap x_{T'(i) \setminus \lbrace i\rbrace} = x_{T(i) \setminus \lbrace i\rbrace}\) this implies that \(x_{T(i)} \cap (V_i(x') \oplus S)\) is \(d_i\)-dimensional.

Therefore, for each \(v \in V_i\) there exists a unique \(w \in S\) such that \(v + w \in x_{T(i)}\).   Setting \(g(v) = w\) we have that \(f+g\) coincides with \(f\) when restricted to \(\bigoplus\limits_{j > i}V_j(x')\).

By construction \(x_{T(i)}\) is the image under \(\id+f+g\) of \(V_i \oplus S\), so \(f+g\) is \((i-1)\)-good as required.
\end{proof}

\subsection{Proof of theorem \ref{vectorbundletheorem}}

We define for each \(x' \in \X_{T'}\) the perpendicular compatible splitting \(V(x') = (V_1(x'),\ldots, V_N(x'))\) as in lemma \ref{perpcompatiblesplitting}.

We define
\begin{equation}\label{vectorbundleequation}
  \V_{T,T'} = \lbrace (x',f):  x' \in \X_{T'},   f \in \nil_{T,T'}(V(x'))\rbrace.
\end{equation}

This is a vector bundle with base \(\X_{T'}\) with the projection onto the first coordinate.  It is a sub-bundle of the product \(\X_{T'} \times \text{Hom}(\R^d,\R^d)\).  We metrizice it with the sum of the distance in \(\X_{T'}\) and any operator norm on \(\text{Hom}(\R^d,\R^d)\).

We claim that \(\varphi_{T,T'}:\V_{T,T'} \to \X_{T,T'}\) be defined by
\[\varphi_{T,T'}(x',f) = \varphi_{V(x')}(f),\]
is a locally bilipschitz homeomorphism.

For each \(i\) the mapping \(x' \mapsto V_i(x')\) is lipschitz since it is defined by taking the perpendicular complement of a subspace of \(x'\) within another.  Since, \(\varphi_{V}\) is defined applying \(\id +f\) to sums of the subspaces \(V_i(x')\) it is a lipschitz mapping as well.  Hence \(\varphi_{T,T'}\) is a lipschitz mapping.

From lemma \ref{compatiblephi}, \ref{injectivephi}, and \ref{surjectivephi} it follows that \(\varphi_{T,T'}\) has an inverse which we denote by \(\psi_{T,T'}\).  From the proof of lemma \ref{surjectivephi}, it follows that \(\psi_{T,T'}\) is locally lipschitz because the mapping associating in a splitting \(S \oplus S'\) to each subspace \(S''\) complementary to \(S'\) a linear mapping \(f:S \to S'\) whose graph is \(S''\) is locally lipschitz from the \(\dim(S)\)-dimensional Grasmannian to \(\text{Hom}(S\oplus S', S\oplus S')\).

\subsection{An example of a non-trivial configuration bundle}

In view of theorem \ref{vectorbundletheorem} it is natural to ask whether the vector bundles constructed are trivial (i.e. the product of the base times \(\R^k\)).  We give an example showing this is not the case.

\begin{example}
  For \(N = 2, d_1=1,d_2=2\) for the only two admissible topologies \(T_1 \prec T_0\) the bundle \(\X_{T_1,T_0}\) is homeomorphic to the tangent bundle of the projective space \(\mathcal{P}(\R^3)\).
\end{example}

We note that \(\X_{T_1}\) is the space of ordered pairs \((L,P)\) of a one-dimensional subspace \(L\) and a two dimensional subspace \(P\) in \(\R^3\) in general position. 
The map \(\pi_{T_1,T_0}\) is the projection onto the second coordinate.

To establish the claim we endow \(\mathcal{P}(\R^3)\) with the unique rotationally invariant Riemannian metric for which the angle in \(\R^3\) between the subspaces \(x \in \mathcal{P}(\R^3)\) and \(\exp_x(v)\) is \(\|v\|\) for all \((x,v)\) in the tangent bundle \(T\mathcal{P}(\R^3)\), where \(\exp\) denotes the Riemannian exponential mapping.

Notice that \((L,P) \in \X_{T_1}\) if and only if the angle between \(L\) and the perpendicular complement \(L'\) of \(P\) is less than \(\frac{\pi}{2}\).

Hence, for each \((L,P) \in \X_{T_1}\) there is a unique \(v \in T_{L'}\mathcal{P}(\R^3)\) with \(\|v\| < \frac{\pi}{2}\) such that \(L = \exp_{L'}(v)\).

This identifies \(\X_{T_1,T_0}\) with the bundle of tangent vectors with norm less than \(\frac{\pi}{2}\) which is diffeomorphic to the tangent bundle itself.

\section{Fiberwise changes of coordinates}\label{algebraic}

We consider in what follows admissible topologies \(T \prec T'\).  We say two splittings \(V=(V_1,\ldots,V_N)\) and \(W = (W_1,\ldots,W_N)\) are \(T'\)-compatible if there exists \(x' \in \X_{T'}\) such that both \(V\) and \(W\) are 
compatible with \(x'\).  Equivalently
\[\bigoplus\limits_{I}V_i = \bigoplus\limits_{I}W_i,\]
for all \(I \in T'\).

Given two such splittings we let \(\pi_{V_i}\) and \(\pi_{W_i}\) be the corresponding systems of projections, so \(\pi_{V_i}\) is the projection onto \(V_i\) along \(\bigoplus\limits_{j \neq i}V_i\) and similarly for \(\pi_{W_i}\).

\begin{theorem}\label{changeofcoordinatestheorem}
  For each pair of admissible topologies \(T \prec T'\) there exists a polynomial with integer coefficients \(p_{T,T'}\) in \(2N+1\) non-commuting variables such that
  \[\varphi_W^{-1}\circ \varphi_V(f) = p_{T,T'}(f,\pi_{V_1},\ldots, \pi_{V_N},\pi_{W_1},\ldots,\pi_{W_N}),\]
  for all \(T'\)-compatible splittings \(V,W\), and all \(f \in \nil_{T,T'}(V)\).
\end{theorem}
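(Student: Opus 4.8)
The plan is to construct the map $g:=\varphi_W^{-1}\circ\varphi_V(f)\in\nil_{T,T'}(W)$ explicitly by a finite procedure, and to check that every step stays within the class of expressions polynomial with integer coefficients in $f,\pi_{V_1},\dots,\pi_{V_N},\pi_{W_1},\dots,\pi_{W_N}$. This class is closed under sums, composition, and inversion of operators of the form $\id+n$ with $n$ nilpotent, since then $(\id+n)^{-1}=\sum_{l\ge 0}(-n)^{l}$ is a \emph{finite} sum with integer coefficients. That $g$ exists and is unique is already contained in Lemmas \ref{compatiblephi}, \ref{injectivephi}, \ref{surjectivephi}, so every invertibility assertion below may be quoted from those lemmas; the whole content of the theorem is the polynomial dependence.

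Two structural facts organise the argument. First, $\{k,\dots,N\}\in T_0\subseteq T'$ for every admissible $T'$, so the two splittings split the same flag: $\bigoplus_{j\ge k}V_j=\bigoplus_{j\ge k}W_j$ for all $k$. Hence $\tau:=\sum_{i}\pi_{W_i}\pi_{V_i}$ satisfies $\tau(V_i)=W_i$ and equals $\id$ plus an operator that strictly raises the grading $\R^d=\bigoplus_i V_i$; it is therefore invertible with $\tau^{-1}=\sum_{l\ge0}\bigl(-(\tau-\id)\bigr)^{l}$, and both $\tau,\tau^{-1}$ are polynomials in the $\pi$'s. One also checks $\pi_{V_m}\pi_{W_m}=\id$ on $V_m$. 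Second, admissibility of $T'$ gives $T'(i)\setminus T(i)\subseteq\{i+1,\dots,N\}$, so every $f\in\nil_{T,T'}(V)$, and likewise $\tau-\id$, strictly raises the $\bigoplus_i V_i$-grading: this is the source of all the nilpotency below. Finally $\bigoplus_{j\in T'(i)}V_j=x'_{T'(i)}=\bigoplus_{j\in T'(i)}W_j$ is $(\id+f)$-invariant.

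I recover $g$ one block at a time, running $i$ from $N$ down to $1$ and writing the unknown block $g|_{W_i}\colon W_i\to S_i^W:=\bigoplus_{j\in T'(i)\setminus T(i)}W_j$. Put $x:=\varphi_V(f)$, so $x_{T(i)}=(\id+f)\bigl(\bigoplus_{j\in T(i)}V_j\bigr)$. The equations $\varphi_W(g)_I=x_I$ for $I\in T$ are equivalent to those over the atoms $I=T(i)$; processing $i$ downward, and using that $T(i)\setminus\{i\}=\bigcup_{j\in T(i),\,j\neq i}T(j)$ is again open so that $x_{T(i)\setminus\{i\}}$ is already matched inductively, the equation at $T(i)$ reduces to $w+g(w)\in x_{T(i)}$ for all $w\in W_i$, a condition involving only the single block $g|_{W_i}$. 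Since $w,g(w)\in x'_{T'(i)}$ and $(\id+f)^{-1}$ preserves $x'_{T'(i)}=\bigoplus_{j\in T'(i)}V_j$, applying $(\id+f)^{-1}$ and projecting with $R_i^V:=\sum_{j\in T'(i)\setminus T(i)}\pi_{V_j}$ turns this into the fibrewise linear equation $R_i^V(\id+f)^{-1}\bigl(w+g(w)\bigr)=0$, whence
\[
g|_{W_i}\;=\;-\,M_i^{-1}\,R_i^V(\id+f)^{-1}|_{W_i},\qquad M_i:=R_i^V(\id+f)^{-1}|_{S_i^W}\colon S_i^W\to S_i^V .
\]
Crucially the right-hand side involves only $f$ and the $\pi$'s, not the previously computed blocks, so once each $M_i^{-1}$ is shown polynomial, $g=\sum_i g\pi_{W_i}$ is a universal polynomial and we take $p_{T,T'}$ to be the corresponding sum.

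It remains to invert $M_i$. Factor $M_i=M_i^{(0)}\circ(\id+\rho_i)$, where $M_i^{(0)}:=R_i^V|_{S_i^W}$ is the $f=0$ instance and $\rho_i:=(M_i^{(0)})^{-1}R_i^V\bigl((\id+f)^{-1}-\id\bigr)|_{S_i^W}$; the grading remark shows $\rho_i$ strictly raises the grading of $S_i^W=\bigoplus_{j\in T'(i)\setminus T(i)}W_j$, hence is nilpotent, so $(\id+\rho_i)^{-1}$ is a terminating Neumann series. For $M_i^{(0)}$ use $\tau$: since $\tau(S_i^V)=S_i^W$, and $\tau(V_m)=W_m\subseteq\bigoplus_{k\ge m}V_k$ with $\pi_{V_m}\pi_{W_m}=\id$ on $V_m$, the composite $R_i^V\,\tau|_{S_i^V}$ equals $\id+\nu_i$ with $\nu_i$ nilpotent (it strictly raises the grading of $S_i^V$); therefore $M_i^{(0)}=(\id+\nu_i)\circ(\tau|_{S_i^V})^{-1}$ and $(M_i^{(0)})^{-1}=\tau|_{S_i^V}\circ(\id+\nu_i)^{-1}$ is again built from $\tau$ and a terminating Neumann series. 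All the inverses appearing ($M_i,M_i^{(0)},\id+\nu_i,\id+\rho_i$) exist by the uniqueness in Lemma \ref{injectivephi}, applied to $f$ and, for the $M_i^{(0)}$ factor, to $f=0$. The resulting $p_{T,T'}$ is a polynomial with integer coefficients, of degree bounded in terms of $d$. I expect the main obstacle in a complete write-up to be exactly the identification of the two interacting nilpotent structures — the grading of $\R^d$ by the $V_j$, and, inside each $x'_{T'(i)}$, the gradings of $S_i^V$ and $S_i^W$ — together with the bookkeeping that lets the atom equation at step $i$ be solved fibrewise, independently of the blocks already determined.
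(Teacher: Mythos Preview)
Your proof is correct and takes a genuinely different route from the paper's.

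Both arguments start from the same basic identity: if $g=\varphi_W^{-1}\varphi_V(f)$ then $(\id+f)^{-1}(\id+g)$ carries $\bigoplus_{T(i)}W_j$ into $\bigoplus_{T(i)}V_j$, so the $V_j$-components with $j\notin T(i)$ vanish. The paper exploits this one component at a time: it sets up a double recursion, computing the individual blocks $\pi_{W_j}g\pi_{W_i}$ and the auxiliary blocks $\pi_{V_j}g\pi_{W_i}$ by induction on $j$ for each fixed $i$, with explicit polynomial formulas $p_{j,i},q_{j,i}$ defined recursively (using $r(f)=\sum_{l<N}(-f)^l$ and the projection rules of Lemma~\ref{projectionruleslemma}). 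Your approach instead solves for the whole column $g|_{W_i}$ at once, writing it as $-M_i^{-1}R_i^V(\id+f)^{-1}|_{W_i}$ and then inverting $M_i$ through two nested Neumann series: one for the ``$f=0$'' part $M_i^{(0)}$ via the transition operator $\tau=\sum_i\pi_{W_i}\pi_{V_i}$, and one for the nilpotent perturbation $\rho_i$ coming from $(\id+f)^{-1}-\id$. The nilpotency in both series is grounded in the single observation that everything in sight strictly raises the $\bigoplus_j V_j$-grading.

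What each approach buys: the paper's explicit recursive polynomials are tailored for the later Lemma~\ref{onestepchangelemma}, where one reads off the linear part $A(f)=\pi_{W_j}f\pi_{W_i}$ of a one-step change of coordinates directly from the shape of the formulas. Your argument is more structural and makes the decoupling transparent---each column $g\pi_{W_i}$ is determined independently of the others---but extracting the explicit linear part for the one-step case would require unwinding the Neumann series. The bookkeeping concern you flag at the end (two interacting gradings, and passing between restricted operators on $S_i^V,S_i^W$ and globally defined polynomial expressions on $\R^d$) is real but manageable: the key point, which you have, is that every auxiliary operator ($\nu_i$, $\rho_i$, $\tau-\id$) factors through a projection onto the relevant graded piece and is therefore globally nilpotent, so each Neumann series terminates and extends to a polynomial on all of $\R^d$.
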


\subsection{Projections}

\begin{lemma}\label{projectionruleslemma}
  For each \(i\) one has 
  \[\pi_{V_i} = \pi_{V_i}\pi_{W_i}\pi_{V_i}, \pi_{W_i} = \pi_{W_i}\pi_{V_i}\pi_{W_i}\]
  and furthermore,
  \[\pi_{W_i}\pi_{V_j} = \pi_{V_i}\pi_{W_j} = 0,\]
  for all \(j > i\).
\end{lemma}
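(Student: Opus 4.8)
The plan is to isolate one geometric consequence of admissibility together with $T'$-compatibility and then read off all four identities from it. First I would record the basic inclusion: fixing $i$ and taking any $j > i$, the $T'$-compatibility with some $x' \in \X_{T'}$ gives $V_j \subset x'_{T'(j)} = \bigoplus_{k \in T'(j)}W_k$ and symmetrically $W_j \subset \bigoplus_{k \in T'(j)}V_k$; since admissibility forces $T'(j) \subset \lbrace j,\ldots,N\rbrace$, the index $i$ does not appear, so $V_j \subset \bigoplus_{k\neq i}W_k = \ker \pi_{W_i}$ and $W_j \subset \ker \pi_{V_i}$ for every $j > i$.

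The off-diagonal identities then follow immediately: for $j > i$ the image of $\pi_{V_j}$ is $V_j \subset \ker \pi_{W_i}$, so $\pi_{W_i}\pi_{V_j} = 0$, and exchanging the roles of $V$ and $W$ yields $\pi_{V_i}\pi_{W_j}=0$. For the diagonal identities it suffices, by the same $V \leftrightarrow W$ symmetry, to prove $\pi_{V_i}\pi_{W_i}\pi_{V_i} = \pi_{V_i}$. Since $\ker \pi_{V_i} = \bigoplus_{j\neq i}V_j$, both sides vanish on that subspace, so I would only need to check that $\pi_{V_i}\pi_{W_i}$ restricts to the identity on $V_i$. Writing $v \in V_i \subset x'_{T'(i)} = \bigoplus_{k\in T'(i)}W_k$ as $v = \sum_{k\in T'(i)}w_k$ with $w_k \in W_k$, one has $\pi_{W_i}(v) = w_i$, hence $\pi_{W_i}(v) - v = -\sum_{k\in T'(i)\setminus\lbrace i\rbrace}w_k$; every such $k$ exceeds $i$ by admissibility, so each $w_k \in \ker\pi_{V_i}$ by the basic inclusion, and therefore $\pi_{V_i}\pi_{W_i}(v) = \pi_{V_i}(v) = v$.

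I do not expect a genuine obstacle here. The one point needing care is that these identities are inherently one-sided --- they hold for $j > i$ but not for $j < i$ --- and this asymmetry is precisely what the admissibility hypothesis $T'(i) \subset \lbrace i,\ldots,N\rbrace$ supplies: it makes all the ``lower-triangular'' interactions between the two $T'$-compatible splittings vanish. Everything else is routine bookkeeping with the projections $\pi_{V_i}$, $\pi_{W_i}$.
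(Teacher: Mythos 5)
Your proof is correct and rests on the same key observation as the paper's: admissibility of $T'$ (i.e.\ $T'(j) \subset \lbrace j,\ldots,N\rbrace$) together with $T'$-compatibility forces $V_j \subset \bigoplus_{k>i}W_k$ and $W_j \subset \bigoplus_{k>i}V_k$ for $j > i$, which kills all the ``lower-triangular'' interactions between the two splittings. The paper packages this slightly more globally, noting that $S = \bigoplus_{j\ge i}V_j = \bigoplus_{j\ge i}W_j$ and $S' = \bigoplus_{j>i}V_j = \bigoplus_{j>i}W_j$ (both sets $\lbrace i,\ldots,N\rbrace$ and $\lbrace i+1,\ldots,N\rbrace$ belong to $T'$) and that both $\pi_{V_i}$ and $\pi_{W_i}$ restrict on $S$ to projections with common kernel $S'$, from which $\pi_{V_i}\pi_{W_i}\pi_{V_i}=\pi_{V_i}$ follows without any coordinate expansion; your element-wise computation on $V_i$ is a direct unfolding of that same fact and is equally valid.
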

\begin{proof}
  We show that \(\pi_{V_i} = \pi_{V_i}\pi_{W_i}\pi_{V_i}\), and \(\pi_{V_i}\pi_{W_j} = 0\).  The other claims follow by permuting \(V\) and \(W\).

  Since \(V\) and \(W\) are compatible, there exist \(S,S'\) such that 
  \[S = \bigoplus\limits_{j \ge i}V_j = \bigoplus\limits_{j \ge i}W_j,\]
  and
  \[S' = \bigoplus\limits_{j > i}V_j = \bigoplus\limits_{j > i}W_j.\]

  It follows that both \(\pi_{V_i}\) and \(\pi_{W_i}\) are projections with kernel \(S'\) when restricted to \(S\).

  Hence, \(\pi_{V_i}\pi_{W_i} = \pi_{V_i}\) within \(S\), and therefore \(\pi_{V_i}\pi_{W_i}\pi_{V_i} = \pi_{V_i}^2 = \pi_{V_i}\), as claimed.

  If \(j > i\), then \(\pi_{W_j}(v) \subset S'\) for all \(v\), so one has \(\pi_{V_i}\pi_{W_j} = 0\) as claimed.
\end{proof}

\subsection{Proof of theorem \ref{changeofcoordinatestheorem}}

We begin by defining a family polynomials in the non-commutative variables \(x,\alpha_1,\ldots,\alpha_N,\beta_1,\ldots,\beta_N\).

We do this inductively by first setting 
\[p_{1,i} = 0 = q_{1,i}\]
for \(i = 1,\ldots,N\).

We continue for each \(i\) defining for \(j = 2,3,\ldots,N\) according to the rules
    \begin{enumerate}
      \item If \(j \in T(i)\) set \(p_{j,i} = 0\) and \(q_{j,i} = \sum\limits_{k \le j}\alpha_j p_{k,i}\),
      \item If \(j \notin T(i)\) set \(q_{j,i} = -\alpha_j r(x) \beta_i + \sum\limits_{l \in T'(i) \setminus T(i)} \sum\limits_{l \le k < j} \alpha_j r(x)x\alpha_k p_{l,i}\),
   \end{enumerate}
   where \(r(x) = 1-x+x^2+\cdots + (-x)^{N-1}\).

Since \(g = \sum\limits_{i,j}\pi_{W_j}g\pi_{W_i}\) the theorem follows  from the following claim by setting \(p_{T,T'} = \sum\limits_{i,j} p_{j,i}\):
\begin{claim}\label{mainlemma}
Let \(f \in \nil_{T,T'}(V)\), \(g = \varphi_{W}^{-1}\varphi_V(f) \in \nil_{T,T'}(W)\), and \(X = (f,\pi_{V_1},\ldots,\pi_{V_N},\pi_{W_1},\ldots, \pi_{W_N})\).

Then for all \(i,j\) one has
   \[p_{i,j}(X) = \pi_{W_i}g\pi_{W_j}\text{ and }q_{i,j}(X) = \pi_{V_i}g\pi_{W_j}.\]
\end{claim}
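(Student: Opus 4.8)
The plan is to convert the defining relation \(\varphi_W(g)=\varphi_V(f)\) into a recursion for the block decomposition of \(g\) with respect to the two splittings, and then to recognise that recursion as the one defining the polynomials \(p_{j,i}\) and \(q_{j,i}\). Throughout we use that, since \(T\prec T'\) are admissible and \(i\in T(i)\), one has \(T'(i)\setminus T(i)\subset\lbrace i+1,\ldots,N\rbrace\) for all \(i\); hence \(f\) maps \(\bigoplus_{m\ge k}V_m\) into \(\bigoplus_{m\ge k+1}V_m\) for each \(k\), so \(f^N=0\), the map \(\id+f\) is invertible with \((\id+f)^{-1}=r(f)\), and both \(r(f)\) and \(r(f)f=fr(f)=\id-r(f)\) preserve every \(\bigoplus_{m\ge k}V_m\), with \(r(f)f\) moreover mapping \(\bigoplus_{m\ge k}V_m\) into \(\bigoplus_{m\ge k+1}V_m\). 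Similarly, since \(g\in\nil_{T,T'}(W)\), the range of \(g\pi_{W_i}\) lies in \(\bigoplus_{l\in T'(i)\setminus T(i)}W_l\), so \(g\pi_{W_i}=\sum_{l\in T'(i)\setminus T(i)}\pi_{W_l}g\pi_{W_i}\).

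The key algebraic input is obtained as follows. Since \(T\) is generated by its atoms \(T(i)\) and configurations in \(\X_T\) satisfy \(x_{I\cup J}=x_I+x_J\), the identity \(\varphi_W(g)=\varphi_V(f)\) is equivalent to \((\id+g)\big(\bigoplus_{k\in T(i)}W_k\big)=(\id+f)\big(\bigoplus_{k\in T(i)}V_k\big)\) for every \(i\). Applying \((\id+f)^{-1}=r(f)\) and taking \(V_l\)-components for \(l\notin T(i)\) we get the operator identity \(\pi_{V_l}r(f)\,g\,\pi_{W_i}=-\pi_{V_l}r(f)\pi_{W_i}\) for all \(i\) and all \(l\notin T(i)\). (This merely re-states the construction of \(g\) in the proof of Lemma \ref{surjectivephi}.)

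Now fix \(i\) and argue by induction on \(j=1,\ldots,N\) that \(p_{j,i}(X)=\pi_{W_j}g\pi_{W_i}\) and \(q_{j,i}(X)=\pi_{V_j}g\pi_{W_i}\). For \(j=1\) both right-hand sides vanish: \(1\notin T'(i)\setminus T(i)\) gives \(\pi_{W_1}g\pi_{W_i}=0\), and then \(\pi_{V_1}g\pi_{W_i}=\sum_k\pi_{V_1}\pi_{W_k}g\pi_{W_i}=0\) since \(\pi_{V_1}\pi_{W_k}=0\) for \(k>1\) by Lemma \ref{projectionruleslemma}. If \(j\in T(i)\) then again \(\pi_{W_j}g\pi_{W_i}=0=p_{j,i}\), while \(\pi_{V_j}g\pi_{W_i}=\sum_{k\le j}\pi_{V_j}\pi_{W_k}g\pi_{W_i}=\sum_{k\le j}\pi_{V_j}p_{k,i}(X)\) using Lemma \ref{projectionruleslemma} and the inductive hypothesis, which is \(q_{j,i}\). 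If \(j\notin T(i)\), take \(l=j\) in the key identity, use \(r(f)=\id-r(f)f\), and rearrange to get \(\pi_{V_j}g\pi_{W_i}=-\pi_{V_j}r(f)\pi_{W_i}+\pi_{V_j}\,r(f)f\,g\pi_{W_i}\); expanding \(g\pi_{W_i}=\sum_{l\in T'(i)\setminus T(i)}p_{l,i}(X)\) and inserting \(\id=\sum_k\pi_{V_k}\) before each \(p_{l,i}(X)\), the block \(\pi_{V_j}r(f)f\pi_{V_k}\) vanishes unless \(k<j\) (since \(r(f)f\) maps \(\bigoplus_{m\ge k}V_m\) into \(\bigoplus_{m\ge k+1}V_m\)), and \(\pi_{V_k}p_{l,i}(X)=\pi_{V_k}\pi_{W_l}g\pi_{W_i}\) vanishes unless \(l\le k\) (Lemma \ref{projectionruleslemma}), so what remains is exactly \(\sum_{l}\sum_{l\le k<j}\pi_{V_j}r(f)f\pi_{V_k}p_{l,i}\), the stated formula for \(q_{j,i}\); since every surviving \(p_{l,i}\) has \(l<j\), the inductive hypothesis applies. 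The \(W\)-block is then determined by \(\pi_{W_j}g\pi_{W_i}=\sum_{k\le j}\pi_{W_j}\pi_{V_k}q_{k,i}(X)\) (from \(g\pi_{W_i}=\sum_k\pi_{V_k}g\pi_{W_i}=\sum_k q_{k,i}(X)\) and \(\pi_{W_j}\pi_{V_k}=0\) for \(k>j\)), again polynomial in \(X\). Summing \(p_{T,T'}=\sum_{i,j}p_{j,i}\) and using \(g=\sum_{i,j}\pi_{W_j}g\pi_{W_i}\) yields \(\varphi_W^{-1}\varphi_V(f)=p_{T,T'}(X)\), proving Theorem \ref{changeofcoordinatestheorem}.

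The step I expect to be the main obstacle is the index bookkeeping in the case \(j\notin T(i)\): one has to check that the truncations \(l\le k<j\) in the recursion are precisely the ones forced by the vanishing relations \(\pi_{W_i}\pi_{V_j}=\pi_{V_i}\pi_{W_j}=0\) (for \(j>i\)) of Lemma \ref{projectionruleslemma} together with the grading-shift properties of \(f\) and \(r(f)\), so that the recursion is well founded and no \(p_{j,i}\) or \(q_{j,i}\) is invoked before it is defined. Once this is in place the remainder is formal manipulation with \(r(f)=(\id+f)^{-1}\) and the identities of Lemma \ref{projectionruleslemma}.
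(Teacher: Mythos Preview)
Your proposal is correct and follows essentially the same approach as the paper: both argue by induction on \(j\) for fixed \(i\), treat the base case \(j=1\) and the two cases \(j\in T(i)\), \(j\notin T(i)\) separately, and derive the recursion in the latter case from \(\pi_{V_j}(\id+f)^{-1}(\id+g)\pi_{W_i}=0\) together with \((\id+f)^{-1}=r(f)=\id-r(f)f\) and the projection relations of Lemma~\ref{projectionruleslemma}. The only cosmetic point is that when you write \(g\pi_{W_i}=\sum_{l}p_{l,i}(X)\) you are anticipating the identity for indices \(l\ge j\); it would be cleaner to first write \(g\pi_{W_i}=\sum_{l}\pi_{W_l}g\pi_{W_i}\), note that only \(l<j\) survive after the truncations, and then invoke the inductive hypothesis for those terms---exactly as you do in the subsequent sentence.
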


As a first step to proving the lemma we show:
\begin{lemma}\label{basecaselemma}
  One has \(\pi_{V_1}g\pi_{W_i} = \pi_{W_1}g\pi_{W_i} = 0 = p_{1,i}(X) = q_{1,i}(X)\) for all \(i\).
\end{lemma}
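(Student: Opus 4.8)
The plan is to unwind the definitions. First recall that \(g = \varphi_W^{-1}\varphi_V(f)\) lies in \(\nil_{T,T'}(W)\), which means that \(g(W_i) \subset \bigoplus_{j \in T'(i)\setminus T(i)} W_j\) for each \(i\); in particular, since \(T'(i) \setminus T(i) \subset \lbrace i+1,\ldots,N\rbrace\) by admissibility, we have \(g(W_i) \subset \bigoplus_{j > i} W_j\). The polynomial identities \(p_{1,i}(X) = q_{1,i}(X) = 0\) are immediate from the initialization \(p_{1,i} = q_{1,i} = 0\), so the content of the lemma is the vanishing of \(\pi_{V_1} g \pi_{W_i}\) and \(\pi_{W_1} g \pi_{W_i}\).

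For the second of these, I would argue: for any \(v \in \R^d\), \(\pi_{W_i}(v) \in W_i\), so \(g\pi_{W_i}(v) \in \bigoplus_{j > i} W_j \subset \bigoplus_{j \ge 1, j \neq 1} W_j\) (using \(i \ge 1\), so \(j > i\) forces \(j \neq 1\)), and therefore \(\pi_{W_1} g \pi_{W_i}(v) = 0\) because \(\pi_{W_1}\) annihilates \(\bigoplus_{j \neq 1} W_j\). For the term \(\pi_{V_1} g \pi_{W_i}\), the key observation is that the largest-index subspaces of the two compatible splittings coincide in the relevant flag-type sense: as in the proof of Lemma \ref{projectionruleslemma}, \(\bigoplus_{j > 1} V_j = \bigoplus_{j > 1} W_j\), so \(g\pi_{W_i}(v) \in \bigoplus_{j > i} W_j \subset \bigoplus_{j > 1} W_j = \bigoplus_{j > 1} V_j\), which is exactly the kernel of \(\pi_{V_1}\); hence \(\pi_{V_1} g \pi_{W_i}(v) = 0\).

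I do not anticipate a serious obstacle here — this is a base case whose only subtlety is keeping straight which splitting each projection and each containment refers to, and invoking admissibility (\(T'(i)\setminus T(i)\subset\lbrace i+1,\ldots,N\rbrace\)) to ensure that images of \(g\) land in the span of strictly higher-indexed subspaces. The one place to be slightly careful is the identification \(\bigoplus_{j>1}V_j = \bigoplus_{j>1}W_j\), which requires that both splittings are compatible with the same \(x' \in \X_{T'}\) and that \(\lbrace 2,\ldots,N\rbrace \in T'\) — the latter holds because \(T_0(1) = \lbrace 1,\ldots,N\rbrace\) need not be used, but \(\bigoplus_{j>1}V_j\) can be expressed using atoms of \(T'\) since \(T'\) is admissible and \(T' \succ T_1\); alternatively, and more cleanly, one notes that for \emph{any} \(i\), \(g\pi_{W_i}(v)\) already lies in \(\bigoplus_{j>i}W_j\), and when \(i\ge 1\) this forces the \(W_1\)-component to vanish, while for the \(V_1\)-component one uses that \(W_1\) is complementary to \(\bigoplus_{j>1}W_j \supset \bigoplus_{j>i}W_j\) and that the latter equals \(\bigoplus_{j>1}V_j\) as subspaces, which is part of \(T'\)-compatibility of \(V\) and \(W\).
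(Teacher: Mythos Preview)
Your proof is correct and follows essentially the same route as the paper: both observe that \(g\pi_{W_i}\) takes values in \(\bigoplus_{j \in T'(i)\setminus T(i)} W_j \subset \bigoplus_{j\ge 2} W_j = \bigoplus_{j\ge 2} V_j\), which lies in the kernel of both \(\pi_{W_1}\) and \(\pi_{V_1}\). Your hedging about whether \(\{2,\ldots,N\}\in T'\) is unnecessary---this set equals \(\bigcup_{k\ge 2}T'(k)\), an open set since each \(T'(k)\subset\{k,\ldots,N\}\subset\{2,\ldots,N\}\) by admissibility---and your reference to the proof of Lemma~\ref{projectionruleslemma} already points to exactly this fact.
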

\begin{proof}
  This follows immediately since \(g\pi_{W_i}\) takes values in
  \[\bigoplus\limits_{T'(i) \setminus T(i)}W_j \subset \bigoplus\limits_{j \ge 2}W_j = \bigoplus\limits_{j \ge 2}V_j.\]
\end{proof}

We now cover the case where \(j \in T(i)\).
\begin{lemma}\label{easycaselemma}
  If \(j \in T(i)\) then \(\pi_{W_j}g\pi_{W_i} = 0 = p_{j,i}(X)\) and
  \[\pi_{V_j}g\pi_{W_i} = \sum\limits_{k \le j}\pi_{V_j}\pi_{W_j}g\pi_{W_i}.\]
\end{lemma}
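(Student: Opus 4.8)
The plan is to derive both assertions purely from the membership \(g \in \nil_{T,T'}(W)\) (already built into the hypotheses of Claim \ref{mainlemma}) together with the elementary projection identities of Lemma \ref{projectionruleslemma}; the defining relation \(\varphi_V(f)=\varphi_W(g)\) plays no role in this ``easy'' case and is reserved for the complementary case \(j\notin T(i)\), where the Neumann-type polynomial \(r(x)\) enters. Accordingly I expect no genuine obstacle here: the only point demanding care is index bookkeeping, namely that admissibility forces \(i\in T(i)\subset T'(i)\subset\{i,\dots,N\}\), so that \(T'(i)\setminus T(i)\subset\{i+1,\dots,N\}\) and in particular the index \(i\) itself never lies in that set.

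First I would record that \(g\in\nil_{T,T'}(W)\) means \(g(W_i)\subset\bigoplus_{k\in T'(i)\setminus T(i)}W_k\), equivalently \(g\pi_{W_i}=\sum_{k\in T'(i)\setminus T(i)}\pi_{W_k}g\pi_{W_i}\). Taking \(j\in T(i)\), we have \(j\notin T'(i)\setminus T(i)\), so left-multiplying this identity by \(\pi_{W_j}\) and using \(\pi_{W_j}\pi_{W_k}=0\) for \(k\neq j\) yields \(\pi_{W_j}g\pi_{W_i}=0\). For the polynomial side, \(p_{1,i}(X)=0\) by Lemma \ref{basecaselemma}, while for \(j\geq 2\) with \(j\in T(i)\) the first defining rule sets \(p_{j,i}=0\); since \(1\in T(i)\) forces \(i=1\) (a case covered by Lemma \ref{basecaselemma}), we conclude \(p_{j,i}(X)=0\) whenever \(j\in T(i)\). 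This establishes the first identity.

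For the second identity I would expand \(\id=\sum_{k=1}^N\pi_{W_k}\), so that \(g\pi_{W_i}=\sum_{k=1}^N\pi_{W_k}g\pi_{W_i}\), and then left-multiply by \(\pi_{V_j}\); since Lemma \ref{projectionruleslemma} gives \(\pi_{V_j}\pi_{W_k}=0\) for all \(k>j\), the sum collapses to the indices \(k\leq j\), which is the asserted expression for \(\pi_{V_j}g\pi_{W_i}\). I would close by observing that this is precisely what keeps the recursion consistent inside the running induction for Claim \ref{mainlemma}: fed into the defining rule \(q_{j,i}=\sum_{k\leq j}\alpha_j p_{k,i}\) together with the inductive hypothesis \(p_{k,i}(X)=\pi_{W_k}g\pi_{W_i}\) for \(k<j\) and the vanishing \(p_{j,i}(X)=0\) from the first part, it gives \(q_{j,i}(X)=\pi_{V_j}g\pi_{W_i}\). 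Thus the substance of Claim \ref{mainlemma} is concentrated in the remaining case \(j\notin T(i)\), not here.
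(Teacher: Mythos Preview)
Your argument is correct and follows the same route as the paper: both parts are obtained from the containment \(g(W_i)\subset\bigoplus_{k\in T'(i)\setminus T(i)}W_k\) together with the vanishing \(\pi_{V_j}\pi_{W_k}=0\) for \(k>j\) from Lemma~\ref{projectionruleslemma}, exactly as you do. Your added remarks on why \(p_{j,i}=0\) follows from the recursive definition and on how the identity feeds the induction for Claim~\ref{mainlemma} are accurate elaborations but do not depart from the paper's method.
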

\begin{proof}
  Since \(g\pi_{W_i}\) takes values in \(\bigoplus\limits_{T'(i) \setminus T(i)}W_k\) the first claim is immediate.

  For the second we use lemma \ref{projectionruleslemma} to conclude
  \[\pi_{V_j}g\pi_{W_i} = \sum\limits_{k}\pi_{V_j}\pi_{W_k}g\pi_{W_i} = \sum\limits_{k \le j}\pi_{V_j}\pi_{W_k}g\pi_{W_i}.\]
\end{proof}

Finally we cover the case \(j \notin T(i)\).
\begin{lemma}\label{hardcaselemma}
  If \(j \notin T(i)\) then
  \[\pi_{V_j}g\pi_{W_i} = -\pi_{V_j}r(f)\pi_{W_i} + \sum\limits_{l \in T'(i) \setminus T(i)}\sum\limits_{l \le k < j}\pi_{V_j}r(f)f\pi_{V_k}\pi_{W_l}g\pi_{W_i},\]
  and
  \[\pi_{W_j}g\pi_{W_i} = \sum\limits_{k \le j}\pi_{W_j}\pi_{V_k}g\pi_{W_i}.\]
\end{lemma}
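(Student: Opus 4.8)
The plan is to derive both displayed identities from the single geometric relation \(\varphi_W(g)=\varphi_V(f)\) by reading it off at the atom \(T(i)\in T\) and then manipulating the resulting equation using the projection rules of Lemma \ref{projectionruleslemma} together with the nilpotency of \(f\). Before starting I would record three elementary facts: since \(T'\) is admissible and \(i\in T(i)\), one has \(T'(m)\setminus T(m)\subseteq\{m+1,\dots,N\}\) for every \(m\); consequently \(f\) maps \(\bigoplus_{m\ge a}V_m\) into \(\bigoplus_{m\ge a+1}V_m\), so \(f^N=0\), the identity \(r(f)=(\id+f)^{-1}\) holds, and \(r(f)\) preserves each subspace \(\bigoplus_{m\ge a}V_m\) (and likewise with \(W\) in place of \(V\)). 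The second identity of the lemma is then purely formal and I would dispatch it exactly as in the second part of Lemma \ref{easycaselemma}: insert \(\id=\sum_k\pi_{V_k}\) just after \(\pi_{W_j}\) to get \(\pi_{W_j}g\pi_{W_i}=\sum_k\pi_{W_j}\pi_{V_k}g\pi_{W_i}\), and discard the terms with \(k>j\) using \(\pi_{W_j}\pi_{V_k}=0\) from Lemma \ref{projectionruleslemma}.

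For the first identity the starting point is the observation that, because \(i\in T(i)\), we have \(W_i\subseteq\bigoplus_{m\in T(i)}W_m\), so evaluating \(\varphi_W(g)=\varphi_V(f)\) at the atom \(T(i)\) and using the definition \ref{lineartoconfiguration} of \(\varphi\) gives
\[(\id+g)(W_i)\subseteq(\id+g)\Big(\bigoplus_{m\in T(i)}W_m\Big)=(\id+f)\Big(\bigoplus_{m\in T(i)}V_m\Big).\]
Applying \((\id+f)^{-1}=r(f)\) and then projecting with \(\pi_{V_j}\) for \(j\notin T(i)\) yields \(\pi_{V_j}r(f)(\id+g)\pi_{W_i}=0\), that is
\[\pi_{V_j}r(f)g\pi_{W_i}=-\pi_{V_j}r(f)\pi_{W_i}.\]
I would then use the relation \(r(f)=\id-f\,r(f)\) to split \(\pi_{V_j}g\pi_{W_i}=\pi_{V_j}r(f)g\pi_{W_i}+\pi_{V_j}f\,r(f)g\pi_{W_i}\) and substitute the previous line into the first summand, reducing everything to the computation of the cross term \(\pi_{V_j}f\,r(f)g\pi_{W_i}\).

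To expand the cross term I would use that \(g\in\nil_{T,T'}(W)\), so by the defining condition \ref{defineVequation} (with \(W\) for \(V\)) the image of \(g\pi_{W_i}\) lies in \(\bigoplus_{l\in T'(i)\setminus T(i)}W_l\), whence \(g\pi_{W_i}=\sum_{l\in T'(i)\setminus T(i)}\pi_{W_l}g\pi_{W_i}\); inserting \(\id=\sum_k\pi_{V_k}\) between \(f\) and \(\pi_{W_l}\) and using \(f\,r(f)=r(f)f\) gives
\[\pi_{V_j}f\,r(f)g\pi_{W_i}=\sum_{l\in T'(i)\setminus T(i)}\sum_{k=1}^N\pi_{V_j}r(f)f\pi_{V_k}\pi_{W_l}g\pi_{W_i}.\]
Here the terms with \(k<l\) vanish by \(\pi_{V_k}\pi_{W_l}=0\) for \(l>k\) (Lemma \ref{projectionruleslemma}), and the terms with \(k\ge j\) vanish because \(f\pi_{V_k}\) has image in \(\bigoplus_{m>k}V_m\), a subspace preserved by \(r(f)\), so \(\pi_{V_j}r(f)f\pi_{V_k}=0\) unless \(k<j\). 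What remains is exactly the double sum \(\sum_{l\in T'(i)\setminus T(i)}\sum_{l\le k<j}\), which together with the term \(-\pi_{V_j}r(f)\pi_{W_i}\) is the claimed formula.

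The only step that is not pure bookkeeping is the displayed inclusion \((\id+g)(W_i)\subseteq(\id+f)(\bigoplus_{m\in T(i)}V_m)\): the point to get right is that the whole content of \(g=\varphi_W^{-1}\varphi_V(f)\) needed here is just its behaviour at the single atom \(T(i)\), combined with the translation of ``lies in \(\bigoplus_{m\in T(i)}V_m\)'' into ``killed by \(\pi_{V_j}\) for every \(j\notin T(i)\)''. Once that observation is in place the rest is forced by Lemma \ref{projectionruleslemma} and the nilpotency of \(f\), and I do not expect any genuine obstacle in the remaining computation beyond keeping track of the vanishing ranges for the indices \(k\) and \(l\).
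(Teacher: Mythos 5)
Your argument is correct and follows essentially the same route as the paper: starting from \(\pi_{V_j}(\id+f)^{-1}(\id+g)\pi_{W_i}=0\), expanding via \(r(f)\), inserting the projection identities, and cutting the index ranges using Lemma~\ref{projectionruleslemma} together with the fact that \(r(f)f\) pushes each \(V_k\) into strictly higher-index summands. The only cosmetic differences are that you isolate \(\pi_{V_j}r(f)g\pi_{W_i}=-\pi_{V_j}r(f)\pi_{W_i}\) first and use the split \(\id=r(f)+fr(f)\) rather than substituting \((\id+f)^{-1}=\id-r(f)f\) directly, and that you cut to \(k<j\) in one step instead of first writing \(k\le j\) and then removing \(k=j\).
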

\begin{proof}
  The second claim follows by the same argument as in lemma \ref{easycaselemma}.

  For the first we observe that, since \(\varphi_W(g) = \varphi_V(f)\) we have
  \[(\id+g)\bigoplus\limits_{T(i)}W_k = (\id+f)\bigoplus\limits_{T(i)}V_k.\]

  Since \(j \notin T(i)\) this implies
  \[\pi_{V_j}(\id+f)^{-1}(\id+g)\pi_{W_i} = 0.\]

  Since \(f \in \nil_{T,T'}(V)\) we have \(f^N = 0\) so \((\id+f)^{-1} = r(f) = \id - r(f)f\).  It follows that
  \[\pi_{V_j}g\pi_{W_i} = -\pi_{V_j}r(f)\pi_{W_i} + \pi_{V_j}r(f)fg\pi_{W_i}.\]

  We now write \(g = \sum\limits_{l,k}\pi_{V_k}\pi_{W_l}g\) and use lemma \ref{projectionruleslemma} to obtain
  \[\pi_{V_j}g\pi_{W_i} = -\pi_{V_j}r(f)\pi_{W_i} + \sum\limits_{l \in T'(i) \setminus T(i)}\sum\limits_{l \le k \le j}\pi_{V_j}r(f)f\pi_{V_k}\pi_{W_l}g\pi_{W_i}.\]

  However, since \(r(f)f(V_j) \subset V_{j+1}\oplus \cdots \oplus V_N\) we obtain \(\pi_{V_j}r(f)f\pi_{V_k} = 0\) by lemma \ref{projectionruleslemma}, so in fact
  \[\pi_{V_j}g\pi_{W_i} = -\pi_{V_j}r(f)\pi_{W_i} + \sum\limits_{l \in T'(i) \setminus T(i)}\sum\limits_{l \le k < j}\pi_{V_j}r(f)f\pi_{V_k}\pi_{W_l}g\pi_{W_i},\]
  as claimed.
\end{proof}

Claim \ref{mainlemma} now follows by induction:  the base case for \(p_{1,i}\) and \(q_{1,i}\) is given by lemma \ref{basecaselemma}.
Assuming that claim \ref{mainlemma}  has been established for \(p_{k,i}\) and \(q_{k,i}\) with \(k < j\) then if \(j \in T(i)\), it holds for \(p_{j,i}\) and \(q_{j,i}\) by lemma \ref{easycaselemma},
and otherwise it holds by lemma \ref{hardcaselemma}.

\subsection{Change of coordinates for one step bundles}

\begin{lemma}[One step change of coordinates]\label{onestepchangelemma}
  If \(T \onefiner T'\) then for all \(x' \in \X_{T'}\) and all compatible splittings \(V,W\) the mapping
  \[\varphi_W^{-1}\circ \varphi_{V}:\nil_{T,T'}(V) \to \nil_{T,T'}(W),\]
  is affine.
  Furthermore, setting \(\varphi_W^{-1}(\varphi_V(f)) = A(f) + B\) with \(A\) linear and \(B \in \nil_{T,T'}(W)\) one has
  \[A(f) =  \pi_{W_j} f \pi_{W_i},\]
  where \(i < j\) are such that \(T(i) = T'(i) \setminus \lbrace j\rbrace\).
\end{lemma}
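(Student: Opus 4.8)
The plan is to specialize Theorem \ref{changeofcoordinatestheorem} to the one-step case $T \onefiner T'$. When $T \onefiner T'$, there is a single atom $T(i)$ with $T'(i) = T(i) \cup \{j\}$ for some $j > i$, and $T'(i) \setminus T(i) = \{j\}$ while $T'(k)\setminus T(k) = \emptyset$ for every $k \neq i$. So $f \in \nil_{T,T'}(V)$ is constrained only on $V_i$, where $f(V_i) \subset V_j$, and vanishes on every $V_k$ with $k \neq i$; in particular $f = \pi_{V_j} f \pi_{V_i}$ and $f^2 = 0$, so $r(f) = \id - f$ actually truncates and $(\id+f)^{-1} = \id - f$. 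First I would feed these facts into the polynomial recursion defining $p_{j,i}$ and $q_{j,i}$: most branches collapse because the only index with $T'(i)\setminus T(i)$ nonempty is the fixed $i$, and the inner sum $\sum_{l \in T'(i)\setminus T(i)}$ reduces to the single term $l = j$. Tracking which $p_{k,i}, q_{k,i}$ survive, one finds $g = \varphi_W^{-1}\varphi_V(f)$ has the form $A(f) + B$ with $A(f) = \pi_{W_j} f \pi_{W_i}$ linear in $f$ and $B$ a fixed element of $\nil_{T,T'}(W)$ independent of $f$ (the "$-\alpha_j r(x)\beta_i$" contribution, which has no $x$-dependence once $r(f)f = 0$).

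Alternatively — and I think more transparently — I would argue directly rather than through the universal polynomial. Since $\varphi_V(f)_{K} = (\id+f)(\bigoplus_{k\in K} V_k)$ and only $V_i$ is moved, the configuration $\varphi_V(f)$ differs from $\varphi_V(0)$ only in the atoms $K \in T$ with $\min(K) = i$ other than $T(i)$, and there the new subspace is $(\id+f)(V_i \oplus S)$ with $S = \bigoplus_{k \in T(i)\setminus\{i\}} V_k$. To compute $g = \varphi_W^{-1}\varphi_V(f)$ I would use the characterization from the proof of Lemma \ref{surjectivephi}: $g$ is the unique element of $\nil_{T,T'}(W)$ sending each $w \in W_i$ to the unique $w' \in W_j$ with $w + w' \in x_{T(i)}$, where $x = \varphi_V(f)$. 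So I need: given $w \in W_i$, find $w' \in W_j$ with $w + w' \in (\id+f)(V_i \oplus S)$. Writing $w + w' = (\id+f)(v + s) = v + f(v) + s$ with $v \in V_i$, $s \in S$, and projecting via the $W$-system (using Lemma \ref{projectionruleslemma}, $\pi_{W_j}\pi_{V_k} = 0$ for $k > j$ — here all the relevant indices in $S$ and the image of $f$ are $\geq i$, and one separates the $V_i$-component from the $V_j$-component), one solves for $v$ in terms of $w$ and reads off $w' = \pi_{W_j}(v + f(v) + s)$. The dependence of $w'$ on $w$ is affine: $v$ depends linearly on $w$, $f(v)$ depends linearly on $v$, and the $s$-term contributes something independent of $w$ (it comes from $x_{T(i)\setminus\{i\}}$, which is $\varphi_W(B)(V_i\oplus\cdots)$-type data not involving $f$). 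Extracting the linear part, the only $f$-dependent contribution to $g$ that survives the projections $\pi_{W_j} \cdot \pi_{W_i}$ is $\pi_{W_j} f \pi_{W_i}$, giving $A(f) = \pi_{W_j} f \pi_{W_i}$.

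The main obstacle is bookkeeping, not ideas: I must be careful that the "constant" term $B$ really is independent of $f$, i.e. that the part of $g$ coming from how $x_{T(i)\setminus\{i\}} = (\id+f)(S)$ sits relative to $W$ does not secretly depend on $f$. But $f$ restricted to $S = \bigoplus_{k \in T(i)\setminus\{i\}} V_k$ is zero (since $f$ vanishes off $V_i$), so $(\id+f)(S) = S = \bigoplus_{k\in T(i)\setminus\{i\}} V_k$, which is determined by $x'$ alone — hence $B$ depends only on $V, W$ (equivalently on $x'$), not on $f$. This is the one place the hypothesis $T \onefiner T'$ is doing real work: if more than one atom moved, $f$ would act nontrivially on the analogue of $S$ and the map would genuinely be nonlinear, as Theorem \ref{changeofcoordinatestheorem} allows. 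Once this point is pinned down, verifying $A(f) = \pi_{W_j} f \pi_{W_i}$ is a short computation using Lemma \ref{projectionruleslemma}, and I would present the direct argument with the polynomial recursion mentioned only as a consistency check.
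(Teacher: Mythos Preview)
Your direct approach (Approach 2) is sound and genuinely different from the paper's. The paper works entirely through Theorem \ref{changeofcoordinatestheorem}: since $f(\R^d)\subset \bigoplus_{k\ge j}V_k = \bigoplus_{k\ge j}W_k$, since $f$ annihilates this subspace, and since every $\pi_{V_k},\pi_{W_k}$ preserves it, one has $faf=0$ for \emph{every} composition $a$ of projections. Hence any monomial of $p_{T,T'}$ with two or more occurrences of $x$ vanishes when evaluated, giving affineness in one stroke. For the linear part the paper observes that each surviving term has the form $\pi_{W_j}(\text{projections})\pi_{V_j}\,f\,\pi_{V_i}(\text{projections})\pi_{W_i}$, collapses the projection strings to $\pi_{W_j}\pi_{V_j}$ and $\pi_{V_i}\pi_{W_i}$ via Lemma \ref{projectionruleslemma}, obtains $A(f)=C\,\pi_{W_j}f\pi_{W_i}$ for a universal integer $C$, and pins down $C=1$ by specializing to $V=W$. (Your Approach 1 is in this spirit, though you propose tracking the recursion rather than the cleaner $faf=0$ shortcut.)

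Your direct route---solving for $g(w)\in W_j$ from the single nontrivial constraint $\varphi_W(g)_{T(i)}=\varphi_V(f)_{T(i)}$---does yield $g(w)=\pi_{W_j}\pi_{V_i}(w)+\pi_{W_j}f(\pi_{V_i}(w))$ for $w\in W_i$, hence $A(f)=\pi_{W_j}f\pi_{W_i}$ and $B=\pi_{W_j}\pi_{V_i}\pi_{W_i}$ explicitly, without the universal polynomial. One point needs more care than you indicate: when you vary $s\in S=\bigoplus_{k\in T(i)\setminus\{i\}}V_k$, the $W_j$-component of the equation picks up $\pi_{W_j}(s)$, and Lemma \ref{projectionruleslemma} alone kills this only for those $k>j$. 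For $i<k<j$ with $k\in T(i)$ you need the topological fact that $j\notin T'(k)=T(k)$ (otherwise $j\in T(k)\subset T(i)$, a contradiction), so $V_k\subset x'_{T'(k)}=\bigoplus_{l\in T'(k)}W_l$ and $\pi_{W_j}(V_k)=0$. This is the precise place where ``$T\onefiner T'$ is doing real work,'' not the $f|_S=0$ observation you flag. Once this is checked your argument is complete and arguably more transparent than the paper's projection-string analysis; the paper's approach, in exchange, gets affineness for free from $faf=0$.
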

\begin{proof}
  Let \(i < j\) be the elements such that \(T'(i) \setminus T(i) = \lbrace j\rbrace\), and let
  \[S = \bigoplus\limits_{k \ge j}V_i = \bigoplus\limits_{k \ge j}W_i.\]

  From the definition of \(\nil_{T,T'}(V)\), we have \(f(\R^d) \subset S\) and since \(f^2 = 0\) we have \(f(S) = \lbrace 0\rbrace\).

  From lemma \ref{projectionruleslemma} every projection \(\pi_{V_i}(S) \subset S\) and \(\pi_{W_i}(S) \subset S\) for all \(i\).
  Therefore, we have \(faf = 0\) for all \(a\) obtained by composition of projections along the splittings \(V\) and \(W\).

  Let \(p_{T,T'}\) be the non-commuting polynomial given by theorem \ref{changeofcoordinatestheorem}.

  From the above observation it follows that any term of 
  \[p_{T,T'}(x,\alpha_1,\ldots,\alpha_N,\beta_1,\ldots,\beta_N)\] 
  containing \(x\) twice or more, evaluates to \(0\) when calculating \(p_{T,T'}(f,\pi_{V_1},\ldots,\pi_{W_N})\).   Hence, \(\varphi_W^{-1}\varphi_V\) is affine as claimed.

  Since \(f \in \nil_{T,T'}(V)\) we have \(f = \pi_{V_j} f \pi_{V_i}\).  Setting \(g = \varphi_{W}^{-1}(\varphi_V(f))\) we have \(g = \pi_{W_j} g \pi_{W_i}\) and therefore 
  \[A(f) + B = \pi_{W_j}p_{T,T'}(\pi_{V_j}f\pi_{V_i},\pi_{V_1},\ldots,\pi_{W_N})\pi_{W_i}.\]
 
  This implies that \(A(f)\) is equal to a finite sum of terms of the form
  \[\pi_{W_j}\pi_{X_{1}}\cdots \pi_{X_{m}}\pi_{V_j}f\pi_{V_i}\pi_{Y_1}\cdots \pi_{Y_n}\pi_{W_i},\]
  for some sequence of subspaces \(X_k,Y_k\) chosen from the splittings \(V\) and \(W\).

  Since \(\pi_{V_k}^2 = \pi_{V_k}\) and \(\pi_{W_k}^2 = \pi_{W_k}\) for all \(k\), we may assume \(X_k \neq X_{k+1}\) for \(k = 1,\ldots, m-1\).

  By, lemma \ref{projectionruleslemma}, if the term is non-zero there is a non-increasing sequence \(j \ge l_1 \ge l_2 \ge \ldots \ge l_m\) such that \(X_k = V_{l_k}\) if \(k\) is odd and \(X_k = W_{l_k}\) if \(k\) is even.

  For the last term \(\pi_{X_m}\pi_{V_j}\) to be non-zero we must have \(l_m \le j\), so we obtain that \(l_k = j\) for all \(k\) and since, within \(\bigoplus\limits_{l \ge j}V_l\) the projections \(\pi_{W_j}\) and \(\pi_{V_j}\)
  have the same kernel it follows that
  \[\pi_{W_j}\pi_{X_1}\cdots \pi_{X_m}\pi_{V_j} = \pi_{W_j}\pi_{V_j}.\]

  Similarly, \(\pi_{V_i}\pi_{Y_1}\cdots \pi_{Y_n}\pi_{W_i} = \pi_{V_i}\pi_{W_i}\) and we obtain that
  \[A(f) = C \pi_{W_j}\pi_{V_j}f\pi_{V_i}\pi_{W_i} = C \pi_{W_j}f\pi_{W_i},\]
  for some integer \(C\) which does not depend on the particular compatible splitting \(W\).

  Since, when \(V = W\) we have \(C = 1\) we have shown \(A(f) = \pi_{W_j}f\pi_{W_i}\) as claimed.
\end{proof}

\subsection{Change of coordinates for filtered topologies}

\begin{lemma}[Filtered change of coordinates]\label{filteredchangelemma}
  If \(T\) is filtered then for all \(x' \in \X_{T'}\) and all compatible splittings \(V,W\) the mapping
  \[\varphi_W^{-1}\circ \varphi_{V}:\nil_{T,T'}(V) \to \nil_{T,T'}(W),\]
  is affine.
\end{lemma}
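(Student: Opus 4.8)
The plan is to factor the bundle $\X_{T,T'}$ into a tower of sub-bundles, one per block of the left filtration defining $T$, and to run the argument of Lemma~\ref{onestepchangelemma} on each stage.

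Write $T$ as generated by $T_0$ and a left filtration $L$, and let $\lbrace 1,\ldots,N\rbrace = B_1\cup\cdots\cup B_r$ be the partition into consecutive blocks induced by $L$, with $m_t=\max B_t$; thus $T(i)$ is the tail of the block of $i$ and, for $i\in B_s$, $T'(i)\setminus T(i)\subset B_{s+1}\cup\cdots\cup B_r$. For $0\le t\le r$ let $T^{(t)}$ be the admissible topology with $T^{(t)}(i)=T(i)$ for $i\in B_1\cup\cdots\cup B_t$ and $T^{(t)}(i)=T'(i)$ otherwise, so that $T^{(0)}=T'$, $T^{(r)}=T$, and $\pi_{T,T'}=\pi_{T^{(1)},T^{(0)}}\circ\cdots\circ\pi_{T^{(r)},T^{(r-1)}}$. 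The point of this choice is that, at each level $t$, a compatible splitting for a point $z\in\X_{T^{(t-1)}}$ realizes $\nil_{T^{(t)},T^{(t-1)}}$ as the space of maps supported on the block $B_t$ with image contained in $S:=z_{\lbrace m_t+1,\ldots,N\rbrace}$, the span of the later blocks; such a map $f$ satisfies $f^2=0$, and $S$ is a subspace of the configuration $z$ (hence the same for every compatible splitting) which is invariant under all the coordinate projections. Therefore the proof of Lemma~\ref{onestepchangelemma} — expressing the change of coordinates through the universal polynomial of Theorem~\ref{changeofcoordinatestheorem} and discarding every monomial in which $f$ occurs at least twice — applies verbatim, even though $T^{(t)}\onefiner T^{(t-1)}$ need not hold, and shows that the change of coordinates for the bundle $\X_{T^{(t)},T^{(t-1)}}$ is affine.

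Next I would relate $\varphi_V$ for the full bundle to this tower. Fix $x'$ and a compatible splitting $V$, and decompose $f\in\nil_{T,T'}(V)$ as $f=f_1+\cdots+f_r$ with $f_t$ the part of $f$ supported on $B_t$. Since $f_t$ annihilates $\bigoplus_{i\notin B_t}V_i$ and maps $B_t$ into the span of the later blocks, a direct computation shows that the image of $\varphi_V(f)$ in $\X_{T^{(t)}}$ is obtained level by level, at level $s\le t$ by applying $\id+f_s$ to the block-$B_s$ subspaces of the splitting carried up from the previous levels; in particular the level-$t$ fibre coordinate of $\varphi_V(f)$ over $\X_{T^{(t-1)}}$ depends on $f$ only through $f_t$, and linearly so. Passing from the splitting carried up from $V$ to the one carried up from $W$ is exactly the level-$t$ change of coordinates, which is affine by the previous paragraph; composing the $r$ levels then gives that $\varphi_W^{-1}\circ\varphi_V$ is affine.

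The main obstacle is this last assembly: although each level is affine, the splitting used at level $t$ depends on the lower data $f_1,\ldots,f_{t-1}$, so a priori the composition along the tower is only polynomial in $f$. What keeps it affine — and what genuinely uses that $T$ is filtered — is that the block $B_t$ interacts only with the strictly later blocks, so the perturbations introduced at different levels do not feed back into one another; this is precisely the feature that fails for a general admissible $T$, for which the change of coordinates need not be affine. Making this precise requires careful bookkeeping, conveniently organized by tracking how each $\id+f_t$ acts on the fixed flag $F_t=\bigoplus_{k\in B_t\cup\cdots\cup B_r}V_k$ attached to $x'$.
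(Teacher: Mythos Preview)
Your tower decomposition and the observation that each level $\X_{T^{(t)},T^{(t-1)}}$ has affine change of coordinates (via the $f^2=0$ argument from Lemma~\ref{onestepchangelemma}) are correct. But the assembly step --- which you yourself flag as the ``main obstacle'' --- is a genuine gap, not just bookkeeping. The problem is precisely what you identify: the splitting $W^{(t-1)}$ carried up from $W$ depends on $\tilde f_1,\ldots,\tilde f_{t-1}$, hence on $f_1,\ldots,f_{t-1}$; so the level-$t$ change of coordinates is affine in $f_t$ \emph{with coefficients depending on $f_1,\ldots,f_{t-1}$}. Composing such maps gives, a priori, only a polynomial in $f$. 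Your claim that ``perturbations at different levels do not feed back'' is the right intuition, but you have not proved it. For instance, if you try to track monomials in the universal polynomial of Theorem~\ref{changeofcoordinatestheorem}, a term $f_s\, a\, f_{s'}$ with $s>s'$ need not vanish: the image of $f_{s'}$ lies in $\bigoplus_{k>m_{s'}}V_k$, the projections preserve this subspace, and $f_s$ can act nontrivially there. So the tower argument, as written, is incomplete.

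The paper bypasses the tower entirely with a direct formula. The key structural fact when $T$ is filtered is that $T'(i)\setminus T(i)\in T'$ for every $i$, so the subspace $x'_{T'(i)\setminus T(i)}$ is the same whether computed from $V$ or from $W$. Given this, set $g_0=\varphi_W^{-1}(\varphi_V(0))$ and verify directly that
\[
\varphi_W^{-1}(\varphi_V(f))\;=\;g_0+f(\id+g_0).
\]
Two short computations suffice: first, that the right-hand side lies in $\nil_{T,T'}(W)$ (this is where $T'(i)\setminus T(i)\in T'$ is used), and second, that $(\id+g_0+f(\id+g_0))\bigoplus_{T(i)}W_j=(\id+f)(\id+g_0)\bigoplus_{T(i)}W_j=(\id+f)\bigoplus_{T(i)}V_j$. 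This yields affineness immediately, with the explicit linear part $f\mapsto f(\id+g_0)$, and no inductive machinery.
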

\begin{proof}
The key observation is that \(T'(i) \setminus T(i) \in T'\).

To see this, let \(T\) be generated by \(T_0\) and the left filtration \(L\) and notice that \(T(i) = T_i(i) \cap L(i) = \lbrace i, i+1,\ldots, i+k\rbrace\) for certain \(k\).  This implies
\[T'(i) \setminus T(i) = T'(i) \cap \lbrace k+1,\ldots, N\rbrace \in T',\]
as claimed.

As a consequence we obtain
\[x'_{T'(i) \setminus T(i)} = \bigoplus\limits_{T'(i) \setminus T(i)}V_k = \bigoplus\limits_{T'(i) \setminus T(i)}W_j.\]

Now let \(g_0 = \varphi_W^{-1}(\varphi_V(0))\) so that
\[\varphi_W(g_0)_{T(i)} = (\id + g_0)\left(\bigoplus\limits_{T(i)}W_j\right) = \bigoplus\limits_{T(i)}V_j = \varphi_V(0)_{T(i)},\]
for all \(i\).  We claim that
\[\varphi_W^{-1}(\varphi_V(f)) = g_0 + f(\id + g_0),\]
for all \(f \in \nil_{T,T'}(V)\).

We first verify that \(g_0 + f(\id + g_0) \in \nil_{T,T'}(W)\) by checking
\[(g_0 + f(\id +g_0))W_i \subset g_0(W_i) + f(\bigoplus\limits_{T(i)}V_j) = x'_{T'(i) \setminus T(i)} + \sum\limits_{j \in T(i)}\bigoplus\limits_{T'(j) \setminus T(j)}V_k.\]

As noted before \(T(i) = \lbrace i,\ldots, k\rbrace\) for a certain \(k\) and \(L(i) = \lbrace 1,\ldots, k\rbrace\).  
It follows that for each \(j \in T(i)\) we have \(T(j) = \lbrace j,\ldots, k\rbrace\).
Hence \(T'(j) \setminus T(j) = T'(j) \cap \lbrace k,\ldots, N\rbrace \subset T'(i) \cap \lbrace k,\ldots, N\rbrace = T'(i) \setminus T(i)\).
This shows that 
\[\sum\limits_{j \in T(i)}\bigoplus\limits_{T'(j) \setminus T(j)}V_k \subset x'_{T'(i) \setminus T(i)},\]
so that \(g_0 + f(\id + g_0) \in \nil_{T,T'}(W)\) as claimed.

To conclude we verify
\begin{align*}
  \varphi_W(g_0 + f(\id+g_0))_{T(i)} &= (\id + g_0 + f(\id + g_0))\bigoplus\limits_{T(i)}W_j
                      \\ &= (\id + f)(\id + g_0)\bigoplus\limits_{T(i)}W_j
                      \\ &= (\id + f)\bigoplus\limits_{T(i)}V_j 
                      \\ &= \varphi_V(f)_{T(i)},
\end{align*}
for all \(i\).  So that \(\varphi_W(g_0 + f(\id + g_0)) = \varphi_V(f)\) as required.
\end{proof}

\subsection{Examples of change of coordinates}

\subsubsection{Non-affine change of coordinates}

\begin{example}
For \(N = 4, d_1=d_2=d_3=d_4 = 1, d = 4\) and \(T \prec T'\) defined by
\[T'(1) = \lbrace 1,2,3,4\rbrace, T'(2) = \lbrace 2,3,4\rbrace, T'(3) = \lbrace 3,4\rbrace, T'(4) = \lbrace 4\rbrace,\] 
and
\[T(1) = \lbrace 1,3\rbrace, T(2) = \lbrace 2,3\rbrace, T(3) = \lbrace 3\rbrace, T(4) = \lbrace 4\rbrace,\] 
the change of coordinates \(\varphi_W^{-1}\circ \varphi_V\) bewtween compatible splittings is not affine.
\end{example}

Fix a basis \(v_1,v_2,v_3,v_3\) of \(\R^4\) and let \(V = (V_1,V_2,V_3,V_4)\) be the splitting where \(V_i\) is generated by \(v_i\).

If we let \(x'_{I} = \bigoplus\limits_{I}V_i\) for each \(I \in T'\) then \(V\) is compatible with \(x'\).

We let \(w_1 = v_1+v_2+v_3+v_4, w_2 = v_2+v_3+v_4, w_3 = v_3+v_4, w_4 = v_4\) and \(W\) be the splitting where \(W_i\) is generated by \(w_i\).
By construction \(W\) is compatible with \(x'\) as well.

For \(f \in \nil_{T,T'}(V)\) let \(A\) be the associated matrix in the basis \(\lbrace v_i\rbrace\).   It has the form
\[A = \begin{pmatrix}
  0 & 0 & 0 & 0
\\ a & 0 & 0 & 0
\\ 0 & 0 & 0 & 0
\\ b & c & d & 0
\end{pmatrix}.\]

We notice that since \(A^3 = 0\) one has
\[(\id + A)^{-1} = \id - A + A^2 =  \begin{pmatrix}
  1 & 0 & 0 & 0
\\ -a & 1 & 0 & 0
\\ 0 & 0 & 1 & 0
\\ ac - b & -c & -d & 1
\end{pmatrix}.\]

We let \(B\) be the matrix associated to the identity from basis \(\lbrace w_i\rbrace\) to \(\lbrace v_i\rbrace\) which is
\[B = \begin{pmatrix}
   1  &  0 &  0 & 0
\\ 1 &  1 &  0 & 0
\\  1 & 1 &  1 & 0
\\  1 &  1 & 1 & 1
\end{pmatrix}.\]

Finally, we let \(X\) be the matrix associated to \(g = \varphi_{W}^{-1}(\varphi_V(f))\) in the basis \(\lbrace w_i\rbrace\), which will have the form
\[X = \begin{pmatrix}
  0 & 0 & 0 & 0
\\ w & 0 & 0 & 0
\\ 0 & 0 & 0 & 0
\\ x & y & z & 0
\end{pmatrix}.\]

By definition we must have
\begin{equation}\label{bruteforceequation}\left\lbrace\begin{array}{l}
    (\id + f)^{-1}(\id+g)W_4 = V_4
\\   (\id + f)^{-1}(\id+g)W_3 =V_3
\\   (\id + f)^{-1}(\id+g)(W_2\oplus W_3) = (V_2 \oplus V_3)
\\   (\id + f)^{-1}(\id+g)(W_1\oplus W_3) = (V_1 \oplus V_3)
\end{array}\right..\end{equation}

To solve for \(X\) we calculate
\[(\id + A)^{-1}B(\id + X) = \begin{pmatrix}
  1 & 0 & 0 & 0
\\ -a+w+1 & 1 & 0 & 0
\\ w+1 & 1 & 1 & 0
\\ ac - b + w(-c-d+1) -c-d+x+1 &  -c-d+y+1 & -d+z+1 & 1
\end{pmatrix}.\]

The equation \ref{bruteforceequation} becomes
\[\left\lbrace\begin{array}{l}
    -d+z+1 = 0
\\  -c-d+y+1 = 0
\\ -a+w+1 = 0
\\ ac - b + w(-c-d+1) - c-d+x+1 = 0.
\end{array}\right..\]

We have thus obtained:
\[X = \begin{pmatrix}
  0 & 0 & 0 & 0
\\ a-1 & 0 & 0 & 0
\\ 0 & 0 & 0 & 0
\\ -a+b+ad & c+d-1 & d-1 & 0
\end{pmatrix}.\]

Here it can be seen explicitly that \(\varphi_W^{-1}\circ \varphi_V\) is not an affine mapping from \(\nil_{T,T'}(V)\) to \(\nil_{T,T'}(W)\), and instead is polynomial with degree \(2\).

\subsection{Dimension \(3\)}

\begin{example}
  For \(N = 3 = d\) and \(d_1=d_2=d_3=1\) the change of coordinates between compatible splittings \(V,W\) is affine for all pair of admissible topologies \(T \prec T'\).
\end{example}

The only case not covered by lemma \ref{onestepchangelemma} or lemma \ref{filteredchangelemma} is
\[T = \lbrace 1,3\rbrace,\lbrace 2\rbrace,\lbrace 3 \rbrace \prec T' = \lbrace 1,2,3\rbrace, \lbrace 2,3\rbrace, \lbrace 3\rbrace,\]
where we have specified the topologies by listing their atoms.

We will now show that for this particular choice of \(T \prec T'\) changes of coordinates between compatible splittings are also affine.

As before to see this we pick two basis \(v_1,v_2,v_3\) and \(w_1,w_2,w_3\) of \(\R^3\) such that if \(V = (V_1,V_2,V_3)\) and \(W = (W_1,W_2,W_3)\) are the associated splittings there is \(x' \in \X_{T'}\) such that
\[x'_I = \bigoplus\limits_{I}V_i = \bigoplus\limits_{I}W_i,\]
for all \(I \in T'\).

Given \(f \in \nil_{T,T'}(V)\) we let \(A\) be its matrix from the basis \(\lbrace v_i\rbrace\) to itself.  We have
\[A = \begin{pmatrix}
0 & 0 & 0\\
a & 0 & 0\\
0 & b & 0
\end{pmatrix},
\]
for certain \(a,b\) and 
\[(\id + A)^{-1} = \id - A + A^2 = 
  \begin{pmatrix}
1 & 0 & 0\\
-a & 1 & 0\\
ab & -b & 1
\end{pmatrix}.\]

We may change the basis \(\lbrace w_i\rbrace\) while still generating the same splitting \(W\) in such a way that \(w_1 = v_1 + \alpha v_2 + \beta v_3, w_2 = v_2 + \gamma v_3\) and \(w_3 = v_3\).  
After doing this, the matrix \(B\), associated to the identity from the basis \(\lbrace w_i\rbrace\) to \(\lbrace v_i\rbrace\) will have the form
\[B = 
  \begin{pmatrix}
1 & 0 & 0\\
\alpha & 1 & 0\\
\beta & \gamma & 1
\end{pmatrix}.\]

We now let \(X\) be the matrix associated to \(g = \varphi_W^{-1}(\varphi_V(f))\) in the basis \(\lbrace w_i\rbrace\).  We have
\[X = 
  \begin{pmatrix}
0 & 0 & 0\\
x & 0 & 0\\
0 & y & 0
\end{pmatrix}.\]

The element \(g \in \nil_{T,T'}(W)\) is determined by the system of equations
\begin{equation}\label{brute2}
  \left\lbrace\begin{array}{l} 
      (\id + f)^{-1}(\id + g)W_3 = V_3 \\
      (\id + f)^{-1}(\id + g)W_2 = V_2 \\
      (\id + f)^{-1}(\id + g)W_1 \oplus W_3 = V_1 \oplus V_3
  \end{array}\right.
\end{equation}

We calculate
\[(\id + A)^{-1}B(\id + X) = 
  \begin{pmatrix}
1 & 0 & 0\\
-a + \alpha + x & 1 & 0\\
ab - \alpha b + x (\gamma -b) + \beta & - b + \gamma + y  & 1
\end{pmatrix}.\]

Equation \ref{brute2} becomes
\[
\left\lbrace\begin{array}{l} 
      -b + \gamma +y = 0\\
      -a + \alpha + x = 0\\
  \end{array}\right.
\]

So we obtain
\[X = 
  \begin{pmatrix}
0 & 0 & 0\\
a-\alpha & 0 & 0\\
0 & b - \gamma & 0
\end{pmatrix}.\]

\section{Dynamics on configuration spaces}\label{lineardynamics}

\subsection{Inner products on one-step fibers\label{innerproductsection}}

Let \(T \onefiner T'\) and \(i < j\) be such that \(T(i) = T'(i) \setminus \lbrace j\rbrace\).

Consider, for each \(x' \in \X_{T'}\) the perpendicular compatible splitting \(V(x')\) and the vector bundle \(\V_{T,T'}\) defined by equation \ref{vectorbundleequation}.

We fix on each fiber 
\[\V_{T,T'}^{x'} = \lbrace (x',f): f \in \nil_{T,T'}(V(x'))\rbrace,\]
the Frobenius norm \(\| \|_F\) (square root of sums of squares of singular values) of the restriction of \(f\) to \(x'_{T'(i)}\), and the associated inner product and distance.

Given a linear mapping \(A: \V_{T,T'}^{x_1'} \to \V_{T,T'}^{x_2'}\), we denote by \(s_1(A) \ge \cdots \ge s_{d_id_j}(A)\) its singular values with respect to these inner products.

\subsection{Dynamics on one-step bundles}

When \(T \prec T'\) are given we set \(x'(\omega) = E_{T'}(\omega), x(\omega) = E_{T}(\omega)\) and let \(V(\omega)\) be the perpendicular \(x'(\omega)\)-compatible splitting given by lemma \ref{perpcompatiblesplitting}.

\begin{lemma}[Affine action for one step bundles]\label{affineonesteplemma}
  If \(T \onefiner T'\) then 
  \[F_m^n(\omega) = \varphi_{V(\sigma^{n}(\omega))}^{-1}g_{m}^n(\omega)\varphi_{V(\sigma^m(\omega))}: \nil_{T,T'}(V(\sigma^m(\omega))) \to \nil_{T,T'}(V(\sigma^n(\omega)))\]
  is affine and setting
  \[F_m^n(\omega)(f) = A_{m}^n(\omega) f + B_m^n(\omega),\]
  one has
  \[\int\limits_{\Omega}\left|\log\left(s_k(A_0^1(\omega))\right)\right| dm(\omega) < +\infty\]
  and
  \[\chi_{T,T'} = -\lim\limits_{n \to +\infty}\frac{1}{n}\int\limits_{\Omega} \log\left(s_k(A_0^n(\omega))\right) dm(\omega),\]
  for \(k = 1,\ldots, d_id_j\) where \(i < j\) are such that \(T(i) = T'(i) \setminus \lbrace j\rbrace\).
\end{lemma}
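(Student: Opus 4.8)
The affineness is immediate from Lemma \ref{onestepchangelemma}: the composition $\varphi_{V(\sigma^n\omega)}^{-1}\circ g_m^n(\omega)\circ \varphi_{V(\sigma^m\omega)}$ factors as a change of coordinates (affine, by Lemma \ref{onestepchangelemma}, since $T\onefiner T'$) post-composed with the action of $g_m^n(\omega)$ on $\nil_{T,T'}$, which is itself linear because $g\mapsto (\mathrm{Id}+ gfg^{-1})$ is linear in $f$; more precisely one checks that conjugation by $g_m^n(\omega)$ intertwines $\varphi_{gV}$ with $\varphi_V$ up to an affine correction. So $F_m^n(\omega)$ is affine, and writing $F_m^n(\omega)(f)=A_m^n(\omega)f + B_m^n(\omega)$, the cocycle property $F_0^{m+n}(\omega)=F_m^{m+n}(\omega)\circ F_0^m(\omega)$ forces $A_0^{m+n}(\omega)=A_m^{m+n}(\omega)A_0^m(\omega)$, i.e. $A$ is a linear cocycle over $(\Omega,m,\sigma)$.

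Next I would identify $A_0^1(\omega)$ concretely. By Lemma \ref{onestepchangelemma}, the linear part of a change of coordinates $\varphi_W^{-1}\varphi_V$ is $f\mapsto \pi_{W_j}f\pi_{W_i}$, and the linear part of the $g$-action sends $f$ (thought of as a map $V_i\to V_j$) to $g f g^{-1}$ restricted appropriately. Composing, $A_0^1(\omega)$ acts on $\nil_{T,T'}(V(\omega))\cong \mathrm{Hom}(V_i(\omega),V_j(\omega))$ by $f\mapsto \pi_{V_j(\sigma\omega)}\,g_0(\omega)\, f\, (g_0(\omega)|_{V_i(\omega)})^{-1}$, i.e. up to the bounded distortion coming from comparing $V_i(\sigma\omega)$ with $g_0(\omega)V_i(\omega)$ (which are both $d_i$-dimensional subspaces of $x'_{T'(i)}(\sigma\omega)$, differing by a rotation with angle controlled by the Oseledets angle estimate \eqref{Oseledets}), it is the tensor/Hom cocycle $g\cdot f\cdot g^{-1}$ between the restrictions of the Oseledets bundle to $E_i$ and to $E_j$. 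Consequently $\log s_k(A_0^1(\omega))$ is, up to an $L^1$ error, bounded by $\log\|g_0(\omega)\| + \log\|g_0(\omega)^{-1}\|$, which is integrable since $\mu$ has finite first moment; this gives the first displayed integrability claim.

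For the asymptotic exponent, I would combine the Furstenberg–Kesten / Oseledets theorem for the cocycle $A$ with the identification above. Since $A_0^n(\omega)$ is, up to a tempered (subexponential) distortion from \eqref{Oseledets}, conjugate to $g_0^n(\omega)$ acting on $\mathrm{Hom}(E_i(\omega),E_j(\omega))$, every singular value $s_k(A_0^n(\omega))$ grows like $e^{n(\chi_j-\chi_i)} = e^{-n\chi_{T,T'}}$: vectors in $E_i$ expand at rate $\chi_i$ and the functional part on $E_j$ contracts the values at rate $-\chi_j$, so the Hom-entry decays at rate $\chi_i-\chi_j<0$, wait—one must be careful with the direction, but the normalization $\chi_{T,T'}=\chi_i-\chi_j>0$ together with the fact that $f$ maps the \emph{faster} space $E_i$ into the \emph{slower} space $E_j$ shows $\frac1n\log s_k(A_0^n(\omega))\to -(\chi_i-\chi_j)=-\chi_{T,T'}$ for every $k$, a.e.\ $\omega$. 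Integrating, using the uniform integrability of $\frac1n\log s_k(A_0^n)$ (dominated by the Birkhoff averages of $\log\|g_0\|+\log\|g_0^{-1}\|$ plus a tempered term, hence uniformly integrable by the finite first moment assumption and the maximal ergodic theorem), yields the second displayed formula.

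The main obstacle is controlling the tempered distortion: the cocycle $A_0^n(\omega)$ is only \emph{conjugate} to the clean Hom-cocycle $g\cdot f\cdot g^{-1}$ via the change of basis comparing $V_i(\sigma^n\omega)$ with $g_0^n(\omega)V_i(\omega)$ inside $x'_{T'(i)}(\sigma^n\omega)$, and one must show this conjugating factor (and its inverse) has operator norm growing subexponentially and, more delicately, that $\tfrac1n\log s_k(A_0^n)$ is uniformly integrable so that a.e.\ convergence upgrades to $L^1$ convergence. The subexponential growth follows from the Oseledets angle estimate \eqref{Oseledets} applied with $I=\{i\}$, $J=T'(i)\setminus\{i\}$ together with $\dim V_i=d_i$ being the top (most expanded) block inside $x'_{T'(i)}$; the uniform integrability follows from dominating $|\tfrac1n\log s_k(A_0^n(\omega))|$ by $\tfrac1n\sum_{\ell=0}^{n-1}\big(\log\|g_0(\sigma^\ell\omega)\|+\log\|g_0(\sigma^\ell\omega)^{-1}\|\big)$ up to a tempered error, and invoking that Birkhoff averages of an $L^1$ function are uniformly integrable.
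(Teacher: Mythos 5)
Your proposal follows essentially the same route as the paper: affineness from Lemma~\ref{linearactionlemma} (conjugation action is linear) composed with Lemma~\ref{onestepchangelemma} (one-step changes of coordinates are affine with explicit linear part $\pi_{W_j}f\pi_{W_i}$); integrability from the bound $s_1(A_0^1)\le \|g_0\|\,\|g_0^{-1}\|$ and its reverse; and the asymptotic exponent from comparing $V(\omega)$ to the Oseledets splitting via the angle estimate~\eqref{Oseledets}. The one place you diverge is at the very end: the paper passes from a.e.\ convergence of $\frac1n\log s_k(A_0^n)$ to convergence of the integrals by invoking Kingman's subadditive ergodic theorem applied to the subadditive sequences $\log(s_1\cdots s_k)(A_0^n)$, whereas you argue uniform integrability. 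Your phrasing ``up to a tempered error'' in the uniform-integrability step is a genuine soft spot: temperedness gives only a.e.\ subexponential growth, not an $L^1$ bound uniform in $n$. This is easily repaired — you do not need the tempered correction at all, since the exact cocycle inequality $\bigl|\log s_k(A_0^n)\bigr|\le \sum_{\ell=0}^{n-1}\bigl(\log\|g_0\|_F+\log\|g_0^{-1}\|_F\bigr)\circ\sigma^\ell$ already dominates by a Birkhoff sum of an $L^1$ function with no error term, after which uniform integrability of Birkhoff averages does the job — but as written the step is not airtight. Either fix gives the same conclusion, so the argument is sound once that detail is tightened.
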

\begin{proof}
  Fixing \(\omega \in \Omega\) set \(g = g_0(\omega), x' = x'(\omega)\), and \(W = gV(x')\).

  From lemma \ref{linearactionlemma} below we have
  \[\varphi_{gV(x')}^{-1}\varphi_{V(x')}(f) = gfg^{-1},\]
  for all \(f \in \V_{T,T'}^{x'}\).

  By lemma \ref{onestepchangelemma}, we have that \(\varphi_{V(gx')}^{-1}\varphi_{W}\) is affine.  Using the explicit form of the linear part we obtain that \(\varphi_{V(gx')}^{-1}\varphi_{V(x')}(f) = A(f) + B\) where
  \[A(f) = \pi_{V_j(gx')}gfg^{-1}\pi_{V_i(gx')}.\]

  We notice that \(\|\pi_{V_i(gx')}\|_F = 1\) on \(gx'_{T'(i)}\), and \(\|\pi_{V_j(g')}\|_F = 1\) on \(gx'_{T'(j)}\).  Since \(f(x'_{T'(i)}) \subset x'_{T'(j)}\) we obtain
  \[\|\varphi_{V(gx')}^{-1}\varphi_{V(x')}(f)\|_F \le \|g\|_F\|g^{-1}\|_F\|f\|_F.\]

  This implies that \(\|A\| \le \|g\|_F\|g^{-1}\|_F\).  Applying the same argument to \(g^{-1}\) we obtain \(s_{d_id_j}(A) \ge \|g\|_F^{-1}\|g^{-1}\|_F^{-1}\).

  It follows from \(m\)-integrability of the logarithm of singular values of \(g_0(\omega)\) that \(\log s_k(A_0^1(\omega))\) is \(m\)-integrable for all \(k\) as claimed.

  We will now show that
  \[\chi_{T,T'} = -\lim\limits_{n \to +\infty}\frac{1}{n}\log\left(s_k(A_0^n(\omega))\right),\]
  for \(m\)-a.e. \(\omega \in \Omega\).  By the sub-additive ergodic theorem this implies the remaining claim on the integrals of the right hand side above.

  For this purpose we let \(\theta(\omega)\) be the minimal angle between \(E_k(\omega)\) and \(\bigoplus\limits_{l \neq k}E_l(\omega)\) over all \(k\).
  From equation \ref{Oseledets} of Oseledets theorem we have
  \[\lim\limits_{n \to +\infty} \log \sin(\theta(\sigma^n(\omega))) = 0,\]
  for  \(m\)-a.e. \(\omega \in \Omega\).

  Let \(W(\omega)\) be the Oseledets splitting so \(W_i(\omega) = E_i(\omega)\) for \(i = 1,\ldots,N\) and \(V(\omega)\) be the perpendicular splitting adapted to \(x'(\omega) = E_{T'}(\omega)\).

  Since \(g_0^n(\omega)W(\omega) = W(\sigma^n(\omega))\) we may use lemma \ref{linearactionlemma} applied to \(W\), and lemma \ref{onestepchangelemma} applied to the pairs \(V(\omega),W(\omega)\) and \(V(\sigma^{n}(\omega)), W(\sigma^{n}(\omega))\) to obtain
  \[A_0^n(\omega)(f) = \pi_{V_j(\sigma^{n}(\omega))}g_0^n(\omega)\pi_{E_j(\omega)}f\pi_{E_i(\omega)}g_0^{n}(\omega)^{-1}\pi_{V_i(\sigma^n(\omega))},\]
  for all \(f \in \V_{T,T'}^{x'(\omega)}\).

  We observe that 
  \begin{enumerate}
   \item \(\pi_{V_j(\sigma^n(\omega))}\) has norm \(1\) on \(E_j(\sigma^n(\omega)) = g_0^n(\omega)E_j(\omega) \subset x'(\sigma^n(\omega))_{T'(j)}\) since it is a perpendicular projection on the latter subspace,
   \item  By Oseledets theorem the norm of \(g_0^n(\omega)\) on \(E_j(\omega)\) is bounded above by \(\exp(n\chi_j + a_n)\) for some sequence with \(\lim\limits_{n \to +\infty}a_n/n = 0\),
   \item The norm of \(\pi_{E_j}(\omega)\) on \(f(\R^d) = V_j(\omega)\) is at most \(\sin(\theta(\omega))^{-1}\).
   \item Since \(g_0^n(\omega)E_i(\omega) = E_i(\sigma^n(\omega))\) we have \(\pi_{E_i(\omega)}g_0^n(\omega)^{-1} = \pi_{E_i(\omega)}g_0^n(\omega)^{-1}\pi_{E_i(\sigma^n(\omega))}\).
   \item By Oseledets theorem the norm of the latter transformation is bounded above by \(\exp(-n\chi_i + b_n)\) for some sublinear sequence \(b_n\). 
   \item The norm of \(\pi_{E_i(\sigma^n(\omega))}\) on \(V_i(\sigma^n(\omega))\) is at most \(\sin(\theta(\sigma^n(\omega)))^{-1}\).
   \item The norm of \(\pi_{V_i(\sigma^n(\omega))}\) is \(1\) on \(g_0^n(\omega)x'(\omega)_{T'(i)}\).
  \end{enumerate}

  Combining these facts we obtain that
  \[\|A_0^n(\omega)(f)\| \le \|f\|\exp(-n(\chi_i - \chi_j)) \exp(a_n + b_n)\sin(\theta(\omega))^{-1}\sin(\theta(\sigma^n(\omega)))^{-1},\]
  so that
  \[\limsup\limits_{n \to +\infty}\frac{1}{n}\log\|A_0^n(\omega)\| \le -\chi_{T,T'}.\]

  For the lower bound we observe that there is a unit vector \(v \in V_i(\sigma^n(\omega))\) such that
  \[\|f \pi_{E_i(\omega)}g_0^n(\omega)^{-1}v\| = \|f\|\|\pi_{V_i(\omega)}\pi_{E_i}(\omega)g_0^n(\omega)^{-1}v\|.\]

  The minimal norm of \(\pi_{V_i(\omega)}\pi_{E_i(\omega)}g_0^n(\omega)^{-1}\) on \(V_i(\sigma^n(\omega))\) is bounded from below (by Oseledets theorem) by \(\sin(\theta(\omega))\exp(n\chi_i + a_n')\) for some sublinear sequence \(a_n'\).
 
  Similarly, the minimal norm of \(\pi_{V_j(\sigma^{n}(\omega))}g_0^n(\omega)\pi_{E_j(\omega)}\) on \(V_j(\omega)\) is bounded below by \(\sin(\theta(\sigma^n(\omega)))\exp(-n\chi_j + b_n')\) for somesublinear sequence \(b_n'\). 

  Hence,
  \[\|A_0^n(\omega)fv\| \ge \|f\|\exp(-n(\chi_i - \chi_j))\exp(a_n'+b_n')\sin(\theta(\sigma^n(\omega)))\sin(\theta(\omega)),\]
  which yields
  \[\liminf\limits_{n \to +\infty}\frac{1}{n}\log s_{d_id_j}(A_0^n(\omega)) \ge -\chi_{T,T'},\]
  for \(m\)-a.e. \(\omega \in \Omega\).  Which concludes the proof.
\end{proof}

\subsection{Coordinates for linear action}

We note that if \(V = (V_1,\ldots,V_N)\) is adapted to \(x'\) then \(g^{-1}V = (g^{-1}V_1,\ldots, g^{-1}V_N)\) is an adapted splitting for \(g^{-1}x'\).
For the coordinates given by these two splittings the action of \(g\) linear between the corresponding fibers.

\begin{lemma}[Linearizing coordinates]\label{linearactionlemma}
  For each \(x' \in \X_{T'}\) and each adapted splitting \(V\) one has
  \[\varphi_{V}^{-1}\circ g \circ \varphi_{g^{-1}V}(f) = gfg^{-1},\]
  for all \(g \in G\) and all \(f \in \nil_{T,T'}(g^{-1}V)\).

  In particular \(\varphi_{V}^{-1}\circ g\circ \varphi_{g^{-1}V}:\nil_{T,T'}(g^{-1}V) \to \nil_{T,T'}(V)\) is linear.
\end{lemma}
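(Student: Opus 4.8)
The plan is to reduce everything to the elementary operator identity $g(\id+f) = (\id+gfg^{-1})g$ together with the definition \ref{lineartoconfiguration} of $\varphi_V$.

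First I would record two preliminary observations. As noted just before the statement, $g^{-1}V = (g^{-1}V_1,\ldots,g^{-1}V_N)$ is adapted to $g^{-1}x'$ since $g^{-1}$ commutes with direct sums, so $\varphi_{g^{-1}V}$ is defined on $\nil_{T,T'}(g^{-1}V)$ and its image lies in the fiber $\X_{T,T'}^{g^{-1}x'}$. Moreover $gfg^{-1}$ lies in $\nil_{T,T'}(V)$ whenever $f \in \nil_{T,T'}(g^{-1}V)$: applying $g$ to the inclusion $f(g^{-1}V_i)\subset\bigoplus_{j\in T'(i)\setminus T(i)}g^{-1}V_j$ from equation \ref{defineVequation} gives $gfg^{-1}(V_i)\subset\bigoplus_{j\in T'(i)\setminus T(i)}V_j$. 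Finally, since $\pi_{T,T'}$ is $G$-equivariant (it merely forgets some of the subspaces), $g$ carries the fiber over $g^{-1}x'$ bijectively onto the fiber over $x'$, so $\varphi_V^{-1}\circ g\circ\varphi_{g^{-1}V}$ is a well-defined map $\nil_{T,T'}(g^{-1}V)\to\nil_{T,T'}(V)$.

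The computation itself is then direct. Fix $f \in \nil_{T,T'}(g^{-1}V)$ and $I \in T$. By the definition of the $G$-action on $\X_T$ and of $\varphi$, we have $(g\cdot\varphi_{g^{-1}V}(f))_I = g(\id+f)\big(\bigoplus_{i\in I}g^{-1}V_i\big)$. Using $g(\id+f) = (\id+gfg^{-1})g$ and $g\big(\bigoplus_{i\in I}g^{-1}V_i\big) = \bigoplus_{i\in I}V_i$, this equals $(\id+gfg^{-1})\big(\bigoplus_{i\in I}V_i\big) = \varphi_V(gfg^{-1})_I$. As this holds for every $I\in T$, we conclude $g\cdot\varphi_{g^{-1}V}(f) = \varphi_V(gfg^{-1})$; applying $\varphi_V^{-1}$, which exists by Lemmas \ref{injectivephi} and \ref{surjectivephi}, yields $\varphi_V^{-1}\circ g\circ\varphi_{g^{-1}V}(f) = gfg^{-1}$. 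Linearity of $f\mapsto gfg^{-1}$ gives the last assertion. There is no real obstacle here; the only thing to keep straight is the bookkeeping of which fiber one is working in, which is handled by the equivariance remark above.
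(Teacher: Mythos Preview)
Your proof is correct and follows essentially the same route as the paper: both verify the identity $g\cdot\varphi_{g^{-1}V}(f) = \varphi_V(gfg^{-1})$ by a direct computation using $\id + gfg^{-1} = g(\id+f)g^{-1}$ and $g^{-1}\bigl(\bigoplus_{i\in I}V_i\bigr) = \bigoplus_{i\in I}g^{-1}V_i$. You include a bit more bookkeeping (checking $gfg^{-1}\in\nil_{T,T'}(V)$ and that the composite is well defined between the right fibers), which the paper leaves implicit, but the argument is the same.
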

\begin{proof}
  We check that \(\varphi_V(gfg^{-1}) = g\varphi_{g^{-1}V}(f)\) as follows
  \begin{align*}
    \varphi_V(gfg^{-1})_I &= \left(\id + gfg^{-1}\right)\left(\bigoplus\limits_{i \in I}V_i\right)
                       \\ &=  g\left(\id + f\right)g^{-1}\left(\bigoplus\limits_{i \in I}V_i\right)
                       \\ &=  g\left(\id + f\right)\left(\bigoplus\limits_{i \in I}g^{-1}V_i\right)
                       \\ &=  g\varphi_{g^{-1}V}(f).
  \end{align*}
\end{proof}

\begin{corollary}
  If either \(T \onefiner T'\) or if \(T\) is filtered then the action of \(G\) on \(\X_{T,T'}\) is affine on each fiber.
\end{corollary}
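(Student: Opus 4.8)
The plan is to factor the fiber action of a group element $g$ as a composition of a genuinely linear map, supplied by Lemma~\ref{linearactionlemma}, with an affine change of coordinates, supplied by Lemma~\ref{onestepchangelemma} in the one-step case and by Lemma~\ref{filteredchangelemma} in the filtered case.

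First I would record that in either case every fiber $\X_{T,T'}^{x'}$ carries a well-defined affine structure. Indeed, for any two $T'$-compatible splittings $V,W$ of the same $x' \in \X_{T'}$, the coordinate change $\varphi_W^{-1}\circ\varphi_V:\nil_{T,T'}(V)\to\nil_{T,T'}(W)$ is affine by Lemma~\ref{onestepchangelemma} when $T\onefiner T'$, and by Lemma~\ref{filteredchangelemma} when $T$ is filtered; hence the identification of $\X_{T,T'}^{x'}$ with a finite-dimensional vector space, up to affine isomorphism, does not depend on the chosen compatible splitting. It is with respect to these affine structures on source and target that the action $g:\X_{T,T'}^{x'}\to\X_{T,T'}^{gx'}$ is to be shown affine.

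Now fix $g\in G$ and $x'\in\X_{T'}$, let $V$ be a splitting compatible with $x'$ and $W$ a splitting compatible with $gx'$. As observed just before Lemma~\ref{linearactionlemma}, $g^{-1}W$ is then a splitting compatible with $g^{-1}(gx')=x'$, so $V$ and $g^{-1}W$ are both $T'$-compatible splittings of the single configuration $x'$. I would write the fiber action of $g$ in these coordinates as
\[
\varphi_W^{-1}\circ g\circ\varphi_V \;=\; \bigl(\varphi_W^{-1}\circ g\circ\varphi_{g^{-1}W}\bigr)\circ\bigl(\varphi_{g^{-1}W}^{-1}\circ\varphi_V\bigr).
\]
By Lemma~\ref{linearactionlemma} the first factor equals $f\mapsto gfg^{-1}$, which is linear from $\nil_{T,T'}(g^{-1}W)$ to $\nil_{T,T'}(W)$. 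The second factor is a change of coordinates between the two $T'$-compatible splittings $g^{-1}W$ and $V$ of $x'$, hence affine by Lemma~\ref{onestepchangelemma} or Lemma~\ref{filteredchangelemma}. Since the composition of a linear map with an affine map is affine, this proves the corollary.

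There is essentially no obstacle beyond bookkeeping: all the substance has already been established in Lemmas~\ref{linearactionlemma}, \ref{onestepchangelemma} and \ref{filteredchangelemma}. The only point requiring care is the verification that the two splittings appearing in the second factor are compatible with a common configuration in $\X_{T'}$, and this is immediate once one notes that $g(g^{-1}W)=W$ is compatible with $gx'$ by hypothesis.
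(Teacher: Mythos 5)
Your argument is correct and is precisely the one the paper has in mind; the paper simply states that the corollary is ``immediate'' from Lemma~\ref{linearactionlemma} together with Lemma~\ref{onestepchangelemma} or Lemma~\ref{filteredchangelemma}, and your factorization $\varphi_W^{-1}\circ g\circ\varphi_V = (\varphi_W^{-1}\circ g\circ\varphi_{g^{-1}W})\circ(\varphi_{g^{-1}W}^{-1}\circ\varphi_V)$ is exactly how one unpacks that remark. You have filled in the bookkeeping (in particular the observation that $g^{-1}W$ and $V$ are both compatible with $x'$) carefully and correctly.
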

\begin{proof}
  This is immediate from lemma \ref{linearactionlemma} combined with either lemma \ref{filteredchangelemma} or lemma \ref{onestepchangelemma}.
\end{proof}

\subsection{Dynamics on configuration bundles}

Given \(T \prec T'\) we will now prove that it is possible to approximate \(E_{T}(\omega)\) given \(E_{T'}(\omega),g_{-1}(\omega),\ldots,g_{-n}(\omega)\) to within a distance decreasing with rate
\begin{equation}\label{bestratequation}
  \underline{\chi}_{T,T'} = \min\limits_i \min\limits_{j \in T'(i) \setminus T(i)}\chi_i - \chi_j.
\end{equation}

\begin{lemma}[Approximation of Oseledets configurations]\label{approximationlemma}
  Let \(T \prec T'\) be admissible and \(\underline{\chi} = \underline{\chi}_{T,T'}\) be defined by equation \ref{bestratequation}, then 
  \[\limsup\limits_{n \to +\infty}\|\varphi_{V(\omega)}^{-1}g_{-n}^0(\omega)\varphi_{V(\sigma^{-n}(\omega))} - \varphi_{V(\omega)}^{-1}E_T(\omega)\| \le -\underline{\chi},\]
  for \(m\)-a.e. \(\omega \in \Omega\).
  
  In particular
  \[E_T(\omega) = \lim\limits_{n \to +\infty}g_{-n}^0(\omega)\varphi_{V(\sigma^{-n}(\omega))}(0),\]
  for \(m\)-a.e. \(\omega \in \Omega\).
\end{lemma}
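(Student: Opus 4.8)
The plan is to move everything to the linearizing coordinates of Lemma~\ref{linearactionlemma} built from the Oseledets splitting, where the cocycle acts by conjugation. Write \(W(\omega) = (E_1(\omega),\ldots,E_N(\omega))\); this is a splitting compatible with \(x'(\omega) = E_{T'}(\omega)\), and since the atoms of \(E_T(\omega)\) are \(\bigoplus_{i\in I}E_i(\omega)\) we have \(E_T(\omega) = \varphi_{W(\omega)}(0)\). Equivariance of the Oseledets subspaces gives \(g_{-n}^0(\omega)W_i(\sigma^{-n}(\omega)) = W_i(\omega)\) for all \(i\), so \(g_{-n}^0(\omega)^{-1}W(\omega) = W(\sigma^{-n}(\omega))\) and \(g_{-n}^0(\omega)E_T(\sigma^{-n}(\omega)) = E_T(\omega)\). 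Applying Lemma~\ref{linearactionlemma} with \(g = g_{-n}^0(\omega)\) and the splitting \(W(\omega)\) yields
\[\varphi_{W(\omega)}^{-1}\circ g_{-n}^0(\omega)\circ\varphi_{W(\sigma^{-n}(\omega))}(f) = g_{-n}^0(\omega)\,f\,g_{-n}^0(\omega)^{-1}.\]
Hence, setting \(u_n = \varphi_{W(\sigma^{-n}(\omega))}^{-1}\big(\varphi_{V(\sigma^{-n}(\omega))}(0)\big)\in\nil_{T,T'}(W(\sigma^{-n}(\omega)))\), in the \(W(\omega)\)-chart the point \(g_{-n}^0(\omega)\varphi_{V(\sigma^{-n}(\omega))}(0)\) has coordinate \(g_{-n}^0(\omega)\,u_n\,g_{-n}^0(\omega)^{-1}\) while \(E_T(\omega)\) has coordinate \(0\).

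Next I would prove the two estimates that drive the argument. First, \(u_n\) is tempered: \(\limsup_n \tfrac1n\log\|u_n\|\le 0\). By Theorem~\ref{changeofcoordinatestheorem}, \(u_n\) equals the fixed integer polynomial \(p_{T,T'}\) evaluated at \(0\) and at the projections \(\pi_{V_i(\sigma^{-n}(\omega))},\pi_{E_i(\sigma^{-n}(\omega))}\); since the subspaces \(E_i(\sigma^{-n}(\omega))\) and the perpendicular complements \(V_i(\sigma^{-n}(\omega))\) are all built from sums of Oseledets subspaces, the norms of these projections are bounded by a fixed power of \(\sin\theta(\sigma^{-n}(\omega))^{-1}\), where \(\theta\) is the minimal Oseledets angle from the proof of Lemma~\ref{affineonesteplemma}, and equation~\ref{Oseledets} gives \(\tfrac1n\log\sin\theta(\sigma^{-n}(\omega))\to 0\). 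Second, the conjugation contracts at rate \(\underline{\chi}\): writing \(u_n = \sum_i\sum_{j\in T'(i)\setminus T(i)}\pi_{E_j(\sigma^{-n}(\omega))}u_n\pi_{E_i(\sigma^{-n}(\omega))}\), the \((i,j)\) term of \(g_{-n}^0(\omega)u_ng_{-n}^0(\omega)^{-1}\) sends \(E_i(\omega)\) to \(E_j(\omega)\) and, by Oseledets' theorem, has norm at most \(\|u_n\|\) times \(e^{-n(\chi_i-\chi_j)}\) up to a subexponential factor (the factor \(e^{-n\chi_i+o(n)}\) coming from \(g_{-n}^0(\omega)^{-1}\) on \(E_i(\omega)\) and \(e^{n\chi_j+o(n)}\) from \(g_{-n}^0(\omega)\) on \(E_j(\sigma^{-n}(\omega))\)). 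Combining these with \(\underline{\chi} = \min_i\min_{j\in T'(i)\setminus T(i)}(\chi_i-\chi_j)\) and summing over the finitely many pairs \((i,j)\) (each of which has \(j>i\), so \(\underline{\chi}>0\)) gives \(\limsup_n\tfrac1n\log\|g_{-n}^0(\omega)u_ng_{-n}^0(\omega)^{-1}\|\le-\underline{\chi}\) for \(m\)-a.e.\ \(\omega\).

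Finally I would transfer this back to the \(V(\omega)\)-chart. Since \(g_{-n}^0(\omega)u_ng_{-n}^0(\omega)^{-1}\to 0\), for all large \(n\) these points lie in a fixed neighbourhood of \(0 = \varphi_{W(\omega)}^{-1}(E_T(\omega))\) on which \(\varphi_{V(\omega)}^{-1}\circ\varphi_{W(\omega)}\) is Lipschitz --- it is locally Lipschitz, being the composition of the locally Lipschitz \(\varphi_{V(\omega)}^{-1}\) (proof of Theorem~\ref{vectorbundletheorem}) with the locally Lipschitz \(\varphi_{W(\omega)}\) --- with some constant \(L(\omega)<\infty\); hence
\[\big\|\varphi_{V(\omega)}^{-1}\big(g_{-n}^0(\omega)\varphi_{V(\sigma^{-n}(\omega))}(0)\big) - \varphi_{V(\omega)}^{-1}(E_T(\omega))\big\|\le L(\omega)\,\big\|g_{-n}^0(\omega)u_ng_{-n}^0(\omega)^{-1}\big\|,\]
and taking \(\tfrac1n\log\) yields the stated bound. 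The last assertion of the lemma then follows because a strictly negative exponential rate forces \(\varphi_{V(\omega)}^{-1}(g_{-n}^0(\omega)\varphi_{V(\sigma^{-n}(\omega))}(0))\to\varphi_{V(\omega)}^{-1}(E_T(\omega))\), and applying the continuous map \(\varphi_{V(\omega)}\) gives \(g_{-n}^0(\omega)\varphi_{V(\sigma^{-n}(\omega))}(0)\to E_T(\omega)\). I expect the main obstacle to be the temperedness of \(u_n\): one must check that replacing the Oseledets splitting by the perpendicular compatible splitting, and the resulting change of chart, does not introduce superexponential growth, which in the end rests entirely on the subexponential control of all Oseledets angles provided by equation~\ref{Oseledets}.
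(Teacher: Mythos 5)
Your proposal follows the paper's proof essentially step for step: pass to the Oseledets coordinates via Lemma~\ref{linearactionlemma}, decompose the conjugated map into its $(i,j)$-blocks to see the rate $-\underline{\chi}$, control the temperedness of the change-of-chart point $u_n$ (the paper's $B_n(\omega)$) via Theorem~\ref{changeofcoordinatestheorem} and subexponential angle control, and transfer back through the local Lipschitz property of $\varphi_{V(\omega)}^{-1}\circ\varphi_{W(\omega)}$ near the origin, which is exactly the role of the paper's operator $P(\omega)$. One justification is imprecise as written: the perpendicular subspaces $V_i(\sigma^{-n}\omega) = x'_{T'(i)} \cap \bigl(x'_{T'(i)\setminus\{i\}}\bigr)^{\perp}$ are \emph{not} direct sums of Oseledets subspaces (they are orthocomplements inside such sums), so bounding $\|\pi_{V_i}\|$ by a power of $\sin(\theta)^{-1}$ is not immediate and is precisely what the paper proves by first noting that $\pi_{V_i}$ is orthogonal on $x'_{T'(i)}$ and agrees with $\pi_{V_i}\pi_{E_i}$ on $\bigoplus_{j\geq i}E_j$ (same kernel), then using $\pi_{V_i} = \pi_{V_i}(\id-\pi_{V_{i-1}})\cdots(\id-\pi_{V_1})$ to propagate the bound to all of $\R^d$. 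Since you yourself flag temperedness of $u_n$ as the main obstacle and correctly locate its resolution in the angle control of equation~\ref{Oseledets}, this is a gap in detail rather than in strategy.
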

\begin{proof}
  We let \(W(\omega) = (E_1(\omega),\ldots, E_N(\omega))\) be the Oseledets splitting.  Applying lemma \ref{linearactionlemma} we obtain that
  \[\varphi_{W(\omega)}^{-1}g_{-n}^0(\omega)\varphi_{W(\sigma^{-n}(\omega))}(f) = g_{-n}^0(\omega)fg_{-n}^0(\omega)^{-1}.\]

  Suppose \(i <j\) are such that \(j \in T'(i) \setminus T(i)\), and let \(S_{i,j}(\sigma^{-n}(\omega))\) be the subspace of functions \(f \in \nil_{T,T'}(W(\sigma^{-n}(\omega)))\) such that \(f(E_i(\sigma^{-n}(\omega))) \subset E_j(\sigma^{-n}(\omega))\) and \(f(E_k(\sigma^{-n}(\omega))) = \lbrace 0\rbrace\) for all \(k \neq i\).

  As in lemma \ref{dimofnilspaceslemma} we have that \(\nil_{T,T'}(W(\sigma^{-n}(\omega))) = \bigoplus S_{i,j}(\sigma^{-n}(\omega))\), so one has given \(f \in \nil_{T,T'}(W(\sigma^{-n}(\omega)))\) a decomposition
  \[f = \sum\limits_{i}\sum\limits_{j \in T'(i) \setminus T(i)}f_{i,j},\]
  where \(f_{i,j} = \pi_{E_j(\sigma^{-n}(\omega))}f\pi_{E_i(\sigma^{-n}(\omega))}\).

  Let \(\theta(\omega)\) be the minimal angle between \(E_k(\omega)\) and \(\bigoplus\limits_{l \neq k}E_l(\omega)\) over all \(k\).  We have
  \[\|f_{i,j}\| \le \sin(\theta(\sigma^{-n}(\omega)))^{-2}\|f\|.\]

  By Oseledets theorem there exists a sublinear sequence \(a_n\) such that the norm and minimal norm of \(g_{-n}^0(\omega)\) on \(E_k(\sigma^{-n}(\omega))\) are within \(\exp(\pm a_n)\) of \(\exp(n\chi_k)\) for all \(k\) and \(n\).

  Hence we obtain
  \begin{align*}
    \|\varphi_{W(\omega)}^{-1}g_{-n}^0(\omega)\varphi_{W(\sigma^{-n}(\omega))}(f)\| &\le \sum\limits_{i}\sum\limits_{j \in T'(i)\setminus T(i)}\exp(2a_n)\exp(-n(\chi_i - \chi_j))\|f_{i,j}\|
                                                                                 \\ &\le N^2\exp(2a_n)\sin(\theta(\sigma^{-n}(\omega)))^{-2}\|f\|.
\end{align*}

This implies (since the angle \(\theta(\sigma^{-n}(\omega))\) is not exponentially small by Oseledets theorem) that
\[\limsup\limits_{n \to +\infty}\frac{1}{n}\log\|\varphi_{W(\omega)}^{-1}g_{-n}^0(\omega)\varphi_{W(\sigma^{-n}(\omega))}\| \le - \underline{\chi}.\]

By theorem \ref{changeofcoordinatestheorem} we have that \(\varphi_{V(\omega)}^{-1}\varphi_{W(\omega)}(f)\) is a finite sum of terms obtained as composition of \(f\) and projections along \(W(\omega)\) and \(V(\omega)\). We let \(P(\omega)(f)\) be the sum of terms where at least one \(f\) appears.

We now calculate
\begin{align}\label{someaproxeqn}
  \|\varphi_{V(\omega)}^{-1}g_{-n}^0(\omega)&\varphi_{V(\sigma^{-n}(\omega))}(0) - \varphi_{V(\omega)}^{-1}E_T(\omega)\| =
                                         \\ &= \|P(\omega)\left(\varphi_{W(\omega)}^{-1}g_{-n}^{0}(\omega)\varphi_{W(\sigma^{-n}(\omega))}B_n(\omega)\right)\|,
\end{align}
where
\[B_n(\omega) = \varphi_{W(\sigma^{-n}(\omega))}\varphi_{V(\sigma^{-n}(\omega))}(0).\]

We claim that \(\|B_n(\omega)\| \le \exp(b_n)\) for some sublinear sequence \(b_n\) depending on \(\omega\).  

To establish the claim we observe that by theorem \ref{changeofcoordinatestheorem} one has that \(B_n(\omega)\) is a finite sum of terms obtained by composition of the projections along \(V(\sigma^{-n}(\omega))\) and the Oseledets splitting \(W(\sigma^{-n}(\omega))\).  By Oseledets theorem the norms of the projections along \(W(\sigma^{-n}(\omega))\) (which are controlled by \(\sin(\theta(\sigma^{-n}(\omega)))^{-1}\)) are bounded by \(\exp(c_n)\) for some subexponential sequence \(c_n\).

For the perpendicular splitting the same holds.  To see this, first observe that \(\pi_{V_i(\sigma^{-n}(\omega))}\) is an orthogonal projection within \(x'(\sigma^{-n}(\omega))_{T'(i)}\) and therefore
\[\|\pi_{V_i(\sigma^{-n}(\omega))}\| \le 1,\]
on this subspace.

Next, restricted to \(S_i = \bigoplus\limits_{j \ge i}V_j(\sigma^{-n}(\omega)) = \bigoplus\limits_{j \ge i}E_j(\sigma^{-n}(\omega))\) both \(\pi_{V_i}(\sigma^{-n}(\omega))\) and \(\pi_{V_j}(\sigma^{-n}(\omega))\) have the same kernel and therefore
\[\|\pi_{V_i}(\sigma^{-n}(\omega))\| = \|\pi_{V_i}(\sigma^{-n}(\omega))\pi_{E_i}(\sigma^{-n}(\omega))\| \le \|\pi_{E_i}(\sigma^{-n}(\omega))\|,\]
on this subspace.

We establish the claim bounding the norm of \(\pi_{V_i}\) on all of \(\R^d\) as follows:
\begin{align*}
\|\pi_{V_i(\sigma^{-n}(\omega))}\| &= \|\pi_{V_i(\sigma^{-n}(\omega))}(\id - \pi_{V_{i-1}(\sigma^{-n}(\omega))})\cdots (\id - \pi_{V_1(\sigma^{-n}(\omega))})\|
\\ &\le \|\pi_{E_i(\sigma^{-n}(\omega))}\|(1 + \|\pi_{E_{i-1}(\sigma^{-n}(\omega))}\|)\cdots (1 + \|\pi_{E_1(\sigma^{-n}(\omega))}\|).
\end{align*}

Using the claim we obtain that \(P(\omega)\) is evaluated in the right-hand side of equation \ref{someaproxeqn} above, at a vector with norm at most
\(\exp(-n\underline{\chi})\) up to sub-exponential terms.  Since \(P(\omega)(0) = 0\) and \(P(\omega)\) is lipschitz in a neighborhood of \(0\), this concludes the proof of the lemma.
\end{proof}

\section{Fibered entropy of dynamical  measures}\label{entropy}

Let \(T \prec T'\) be  a pair of admissible topologies.
We associate the (fiber) entropy \( \kappa _{T,T'}\)  by the formula
\begin{equation} \kappa_{T,T'} = \int\limits_{\Omega} \log \frac{d g_0(\omega)_* \nu_{T,T'}^{E_{T'}(\omega)}}{ d \nu_{T,T'}^{g_0(\omega)E_{T'}(\omega)}}\left(g_0(\omega)E_T(\omega)\right) dm(\omega).\end{equation}
\begin{proposition}
 The entropies \(\kappa_{T,T'}\) satisfy the following:
 \begin{enumerate}
  \item \(\kappa_{T,T'} \ge 0\) for all \(T \prec T'\)
  \item \(\kappa_{T,T''} = \kappa_{T,T'} + \kappa_{T',T''}\) for all \(T \prec T' \prec T''\).
 \end{enumerate}
\end{proposition}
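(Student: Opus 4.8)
The plan is to derive both parts purely from the disintegration structure of the measures $\nu_T$, using nothing about the bundle geometry. For part (1), I would first recast the defining integral as an average of relative entropies. Since $E_T(\om)$ and $E_{T'}(\om)$ are measurable with respect to $g_{-1}(\om),g_{-2}(\om),\ldots$, they are independent of $g_0(\om)$; and by the definition of disintegration, the conditional law of $E_T(\om)$ given $E_{T'}(\om)=x'$ is $\nu_{T,T'}^{x'}$. Hence the conditional law of $g_0(\om)E_T(\om)$ given $\big(g_0(\om),E_{T'}(\om)\big)=(g,x')$ is $g_*\nu_{T,T'}^{x'}$, a probability measure on $\X_{T,T'}^{gx'}$. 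Taking conditional expectation in the definition of $\kappa_{T,T'}$ then yields
\[
\kappa_{T,T'}=\int_G\int_{\X_{T'}}\left(\int\log\frac{dg_*\nu_{T,T'}^{x'}}{d\nu_{T,T'}^{gx'}}\;d\big(g_*\nu_{T,T'}^{x'}\big)\right)d\nu_{T'}(x')\,d\mu(g).
\]
The inner integral is $\int\log(dp/dq)\,dp$ for the probability measures $p=g_*\nu_{T,T'}^{x'}$ and $q=\nu_{T,T'}^{gx'}$ on the fibre $\X_{T,T'}^{gx'}$; by Jensen's inequality it is $\ge 0$, with value $+\infty$ exactly when $p\not\ll q$, i.e.\ exactly in the case where the convention sets $\kappa_{T,T'}=+\infty$. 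This gives part (1).

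For part (2), fix $T\prec T'\prec T''$. I would use the tower property of disintegrations along $\X_T\xrightarrow{\pi_{T,T'}}\X_{T'}\xrightarrow{\pi_{T',T''}}\X_{T''}$: combining $\nu_T=\int\nu_{T,T'}^{x'}\,d\nu_{T'}(x')$ with $\nu_{T'}=\int\nu_{T',T''}^{x''}\,d\nu_{T''}(x'')$ and invoking uniqueness of disintegration gives $\nu_{T,T''}^{x''}=\int\nu_{T,T'}^{x'}\,d\nu_{T',T''}^{x''}(x')$ for $\nu_{T''}$-a.e.\ $x''$. Pushing this forward by $g$, substituting $y=gx'$ on the fibre of $\pi_{T',T''}$, and matching with the corresponding decomposition of $\nu_{T,T''}^{gx''}$, one obtains the chain rule for Radon--Nikodym derivatives along a disintegration: for $z\in\pi_{T,T''}^{-1}(gx'')$ with $y=\pi_{T,T'}(z)$,
\[
\frac{dg_*\nu_{T,T''}^{x''}}{d\nu_{T,T''}^{gx''}}(z)=\frac{dg_*\nu_{T',T''}^{x''}}{d\nu_{T',T''}^{gx''}}(y)\cdot\frac{dg_*\nu_{T,T'}^{g^{-1}y}}{d\nu_{T,T'}^{y}}(z),
\]
where the left-hand side exists if and only if both factors on the right do. Specializing to $g=g_0(\om)$, $x''=E_{T''}(\om)$, $z=g_0(\om)E_T(\om)$ and using equivariance of $\pi_{T,T'}$ to get $y=g_0(\om)E_{T'}(\om)$ (so $g^{-1}y=E_{T'}(\om)$), taking logarithms and integrating over $m$ turns the displayed identity into $\kappa_{T,T''}=\kappa_{T',T''}+\kappa_{T,T'}$, the two summands being precisely the defining integrals for $\kappa_{T',T''}$ and $\kappa_{T,T'}$, with the value $+\infty$ propagating consistently.

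The conceptual content here is light; the work is measure-theoretic bookkeeping. I expect the main obstacle to be the chain rule for Radon--Nikodym derivatives along the disintegration: one must fix the families $x'\mapsto\nu_{T,T'}^{x'}$ and $x''\mapsto\nu_{T',T''}^{x''}$ so that the factorization of $\nu_{T,T''}^{x''}$ and the resulting product formula for densities hold simultaneously for $m$-a.e.\ $\om$, and then check that the equality, together with its $+\infty$ case, survives integration against $m$. Once the conditioning in part (1) is in place, the inequality itself is immediate from Jensen.
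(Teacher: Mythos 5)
Your argument is correct, and it takes a somewhat different route from the paper's. The paper proves both properties by first establishing (Lemma \ref{furstenbergentropylemma}) the identification $\kappa_{T,T'}=I(g_{-1},E_T\mid E_{T'})$, where $I(\cdot,\cdot\mid\cdot)$ is conditional mutual information, and then invokes the non-negativity and chain rule of mutual information as known facts (with Proposition 6.2 of \cite{ll1} as input for the base case $T'=T_0$). You instead work directly from the disintegration: for (1) you condition on $(g_0,E_{T'})$ to express $\kappa_{T,T'}$ as an average of fibrewise relative entropies $D\bigl(g_*\nu_{T,T'}^{x'}\,\big\|\,\nu_{T,T'}^{gx'}\bigr)\ge0$, and for (2) you push the tower identity $\nu_{T,T''}^{x''}=\int\nu_{T,T'}^{x'}\,d\nu_{T',T''}^{x''}(x')$ through $g$ to get a pointwise product formula for Radon--Nikodym derivatives and then integrate. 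This is essentially the content hiding inside the information-theoretic chain rule, and indeed the paper itself performs exactly such a density-level decomposition as an intermediate step inside the proof of Lemma \ref{furstenbergentropylemma}. What your route buys is self-containment: you avoid the dictionary with mutual information and the reference to \cite{ll1}. What the paper's route buys is that the mutual-information interpretation is needed elsewhere regardless --- it is how the paper derives $\kappa_{T,T_0}=\kappa(\mu,\nu_L)$ and, via Theorem \ref{finiteentropytheorem}, the finiteness $\kappa_{T,T'}<+\infty$ --- so establishing the identification up front amortizes the work. One small point to tighten in your write-up: when you split $\int\log\frac{dg_*\nu_{T,T''}^{x''}}{d\nu_{T,T''}^{gx''}}\,dm$ into two integrals, the justification that this is legitimate (each summand has integral in $[0,+\infty]$ by the part-(1) computation, so linearity applies and $+\infty$ propagates correctly) deserves one explicit sentence; you flag this as the expected obstacle, and that is indeed where the bookkeeping lives.
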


All the claimed properties follow from the interpretation of \(\kappa(\mu,\nu_L)\) and \(\kappa_{T,T'}\) as conditional mutual information (see lemma \ref{furstenbergentropylemma} below).  We follow \cite[Section 5.2]{ll1} and \cite[Section 2]{lessa}.

Recall that given random variables \(X,Y\) with individual distributions \(\mu_X,\mu_Y\) and joint distribution \(\mu_{X,Y}\) the mutual information
\[I(X,Y) = \int \log \frac{d\mu_{X,Y}}{d\mu_X\times \mu_Y}(x,y) d\mu_{X,Y}(x,y),\]
if the Radon-Nikodym derivative if the integral exists and \(+\infty\) otherwise (see \cite{pinsker}, \cite{wyner}).

If \(X,Y,Z\) are measurable functions from \(\Omega\) into Polish spaces, the conditional mutual information \(I(X,Y|Z)\) between \(X\) and \(Y\) given \(Z\) is the expected value of the mutual information obtained using the definition above for a family of regular conditional distributions \(\mu_{X|Z},\mu_{Y|Z}, \mu_{X,Y|Z}\).  Here we have deviated from the notation of \cite{ll1} where \(I(X,Y|Z)\) denoted the \(Z\)-measurable random variable whose expected value integral gives the conditional mutual information. Recall that if \(W,X,Y,Z\) are measurable functions from \(\Omega\) into Polish spaces, then one has the following chain rule
\begin{equation*}  I(W,(X,Y)\vert Z)\;=\; I(W,X\vert Y,Z) +  I(W,Y\vert Z).\end{equation*}
With these notations, proposition \ref{entropypropertieslemma} reduces to the following lemma

\begin{lemma}[Furstenberg entropy and mutual information]\label{furstenbergentropylemma}
 For each left filtration \(L\) one has \(\kappa(\mu,\nu_L) = I(g_{-1},E_L)\).  For each pair of admissible topologies \(T \prec T'\) one has \(\kappa_{T,T'} = I(g_{-1},E_T|E_{T'})\).
 
 In particular \(0 \le \kappa_{T,T'}\) for all admissible \(T \prec T'\) and \(\kappa_{T,T''} = \kappa_{T,T'}+\kappa_{T',T''}\) for all admissible \(T \prec T' \prec T''\). 
\end{lemma}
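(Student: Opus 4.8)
The plan is to verify the two identities $\kappa(\mu,\nu_L)=I(g_{-1},E_L)$ and $\kappa_{T,T'}=I(g_{-1},E_T\mid E_{T'})$ directly from the definitions, after which the positivity $\kappa_{T,T'}\ge 0$ and the additivity $\kappa_{T,T''}=\kappa_{T,T'}+\kappa_{T',T''}$ follow from non-negativity of mutual information and the chain rule recalled above. For the first identity I would use that, as in the proof of stationarity in Section~\ref{theorem:finiteentropytheorem}, the flag $E_L(\omega)$ is $\sigma(g_{-1}(\omega),g_{-2}(\omega),\ldots)$-measurable and $E_L(\omega)=g_{-1}(\omega)E_L(\sigma^{-1}\omega)$, with $E_L(\sigma^{-1}\omega)$ distributed as $\nu_L$ and independent of $g_{-1}(\omega)$. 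Thus a regular conditional law of $E_L(\omega)$ given $g_{-1}(\omega)=g$ is $g_*\nu_L$, the joint law of $(g_{-1},E_L)$ disintegrates as $d\mu(g)\,d(g_*\nu_L)(x)$, its Radon--Nikodym derivative with respect to $\mu\times\nu_L$ is $(g,x)\mapsto\frac{dg_*\nu_L}{d\nu_L}(x)$ (both sides being $+\infty$ if this fails on a set of positive $\mu$-measure), and changing variables $x=gy$ in the defining integral of $I(g_{-1},E_L)$ gives $\int_G\int_{\F_L}\log\frac{dg_*\nu_L}{d\nu_L}(gy)\,d\nu_L(y)\,d\mu(g)=\kappa(\mu,\nu_L)$.

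For the fibered identity, I would first rewrite the defining integral (\ref{kappa1}) of $\kappa_{T,T'}$ using shift-invariance of $m$: replacing $\omega$ by $\sigma^{-1}\omega$ and using $g_0(\sigma^{-1}\omega)=g_{-1}(\omega)$, $g_{-1}(\omega)E_{T'}(\sigma^{-1}\omega)=E_{T'}(\omega)$ and $g_{-1}(\omega)E_T(\sigma^{-1}\omega)=E_T(\omega)$ yields
\[
\kappa_{T,T'}=\int_\Omega\log\frac{d\,g_{-1}(\omega)_*\nu_{T,T'}^{\,g_{-1}(\omega)^{-1}E_{T'}(\omega)}}{d\nu_{T,T'}^{\,E_{T'}(\omega)}}\bigl(E_T(\omega)\bigr)\,dm(\omega).
\]
The identity $\kappa_{T,T'}=I(g_{-1},E_T\mid E_{T'})$ then comes down to identifying, for $\nu_{T'}$-a.e. $z$, the regular conditional laws given $E_{T'}(\omega)=z$: the law of $E_T$ is $\nu_{T,T'}^{z}$ by the very definition of the disintegration, while the joint law of $(g_{-1},E_T)$ is $\mu_{g_{-1}\mid E_{T'}=z}(dg)\otimes(g_*\nu_{T,T'}^{\,g^{-1}z})(dy)$, since on $\{g_{-1}(\omega)=g,\ E_{T'}(\omega)=z\}$ one has $E_{T'}(\sigma^{-1}\omega)=g^{-1}z$ and $E_T(\omega)=g\cdot E_T(\sigma^{-1}\omega)$. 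The Radon--Nikodym derivative of the joint with respect to the product of the marginals is then $(g,y)\mapsto\frac{d(g_*\nu_{T,T'}^{g^{-1}z})}{d\nu_{T,T'}^{z}}(y)$, and substituting into the definition of conditional mutual information and undoing the disintegration reproduces the displayed integral exactly.

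The step that needs genuine work, and the main obstacle, is the claim implicit above that the conditional law of $E_T(\sigma^{-1}\omega)$ given $\{g_{-1}(\omega)=g,\ E_{T'}(\sigma^{-1}\omega)=z'\}$ coincides with $\nu_{T,T'}^{z'}$; equivalently, that $E_T(\omega)$ is conditionally independent of $g_0(\omega)$ given $E_{T'}(\omega)$. This is not automatic, because in general $E_T(\omega)$ involves both past and future Oseledets data and $g_0(\omega)$ is not a function of $E_{T_0}(\omega)$. To handle it I would use the identity $E_i(\omega)=(E_1\oplus\cdots\oplus E_i)(\omega)\cap(E_i\oplus\cdots\oplus E_N)(\omega)$ to write both $E_T(\omega)$ and $E_{T'}(\omega)$ as measurable functions of the pair $(E_{L_1}(\omega),E_{T_0}(\omega))$, where $L_1$ is the finest left filtration and $T_0$ the coarsest admissible topology; here $E_{L_1}(\omega)$ is $\sigma(g_{-1},g_{-2},\ldots)$-measurable, $E_{T_0}(\omega)$ and $g_0(\omega)$ are $\sigma(g_0,g_1,\ldots)$-measurable, and these two $\sigma$-algebras are independent. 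Since $E_{T_0}(\omega)=\pi_{T',T_0}(E_{T'}(\omega))$ is a function of $E_{T'}(\omega)$, conditioning on $E_{T'}(\omega)$ fixes $E_{T_0}(\omega)$; given $E_{T_0}(\omega)$ the variable $E_{L_1}(\omega)$ is independent of $\sigma(g_0,g_1,\ldots)$, and conditioning further on the remaining, $E_{L_1}$-measurable, information carried by $E_{T'}(\omega)$ leaves $E_{L_1}(\omega)$ independent of $g_0(\omega)$; as $E_T(\omega)$ is then a function of $E_{L_1}(\omega)$ alone, the desired conditional independence follows.

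Finally, $\kappa_{T,T'}=I(g_{-1},E_T\mid E_{T'})\ge 0$ because (conditional) mutual information is non-negative, and additivity is the chain rule: since $T\prec T'\prec T''$ makes $E_{T'}$ a measurable function of $E_T$ and $E_{T''}$ a measurable function of $E_{T'}$,
\[
I(g_{-1},E_T\mid E_{T''})=I\bigl(g_{-1},(E_T,E_{T'})\mid E_{T''}\bigr)=I(g_{-1},E_T\mid E_{T'},E_{T''})+I(g_{-1},E_{T'}\mid E_{T''})=\kappa_{T,T'}+\kappa_{T',T''}.
\]
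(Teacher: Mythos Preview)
Your argument is correct. The first identity $\kappa(\mu,\nu_L)=I(g_{-1},E_L)$ is handled essentially as in the paper. For the fibered identity $\kappa_{T,T'}=I(g_{-1},E_T\mid E_{T'})$, however, you take a genuinely different route.

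The paper does \emph{not} attack the general pair $T\prec T'$ directly. It first invokes the special case $T'=T_0$ from \cite[Proposition~6.2]{ll1}, giving $\kappa_{T,T_0}=I(g_{-1},E_T\mid E_{T_0})$, and then deduces the general case by combining two additivity statements: the chain rule $I(g_{-1},E_T\mid E_{T_0})=I(g_{-1},E_{T'}\mid E_{T_0})+I(g_{-1},E_T\mid E_{T'})$ on the mutual-information side, and the elementary density identity $\kappa_{T,T_0}=\kappa_{T',T_0}+\kappa_{T,T'}$ on the $\kappa$ side. You instead prove the general case in one step by isolating and proving the conditional independence $E_T\perp g_0\mid E_{T'}$, using that both $E_T$ and $E_{T'}$ are measurable functions of the pair $(E_{L_1},E_{T_0})$ with $E_{L_1}$ past-measurable, $E_{T_0}$ future-measurable and recoverable from $E_{T'}$; conditioning on $E_{T_0}$ makes $E_{L_1}$ independent of the future, and further conditioning on the (now $E_{L_1}$-measurable) remainder of $E_{T'}$ preserves this. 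This is a clean and self-contained argument that avoids the citation to \cite{ll1} and explains structurally why the formula holds; the paper's route is shorter once that citation is granted, and has the minor advantage that it never needs to identify the conditional law of $g_{-1}$ given $E_{T'}$ (which in your computation appears but cancels). Both approaches yield the concluding positivity and additivity from the chain rule in the same way.
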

\begin{proof}
We first show that \(\kappa(\mu,\nu_L) = I(g_{-1},E_L)\) for each left filtration \(L\).  Keeping with the notation above we let \(\mu_X\) denote the distribution of a random element defined on \((\Omega,m)\).

For this purpose notice first that, by shift invariance of \(m\), the joint distribution of \((g_{-1},E_L)\) coincides with that of \((g_0,g_0E_L)\).

Since \(g_0\) and \(E_L\) are independent we have \(\mu_{g_0,E_L} = \mu \times \nu_L\) and it follows that
\[\int\limits_{G}\int\limits_{\F_L} f(g,x) d\mu_{g_0,g_0E_L}(g,x) = \int\limits_G \int\limits_{\F_L} f(g,x) dg_*\nu_L(x) d\mu(g).\]

Hence, either \(I(g_{-1},E_L) = +\infty\) or one has
\[I(g_{-1},E_L) = \int\limits_{G}\int\limits_{F_L} \log \frac{dg_*\nu_L}{d\nu_L}(x) dg_*\nu_L(x) d\mu(g).\]
In both cases \(I(g_{-1},E_L) = \kappa(\mu,\nu_L)\) as claimed.

\

For a general admissible topology, we have, by \cite[Proposition 6.2]{ll1} (up to slight changes of notations),
\begin{equation}\label{entropyT} \kappa_{T,T_0} = I(g_{-1},E_T|E_{T_0}).\end{equation} 

The chain rule for mutual information implies that if \(T \prec T'\) one has 
\begin{align*}
I(g_{-1},E_T|E_{T_0}) &= I(g_{-1},(E_T,\pi_{T,T'}(E_T))|E_{T_0})
\\ &= I(g_{-1},\pi_{T,T'}(E_T)|E_{T_0}) + I(g_{-1},E_T|\pi_{T,T'}(E_T),E_{T_0})
\\ &= I(g_{-1},E_{T'}|E_{T_0}) + I(g_{-1},E_T|E_{T'}),
\end{align*}
where in the last line we use that \(E_{T_0} = \pi_{T',T_0}(E_{T'})\) so conditioning on \(E_{T'}\) and \(E_{T_0}\) is the same as conditioning only on \(E_{T'}\).

Directly from the definition using the densities of the disintegration of \(\nu_T\) with respect to \(\pi_{T,T'}\) and \(\pi_{T,T_0}\) one sees that \(\kappa_{T,T_0} = \kappa_{T',T_0} + \kappa_{T,T'}\).  This shows that \(\kappa_{T,T'} = I(g_{-1},E_{T}|E_{T'})\) for all \(T \prec T'\).

From the chain rule for mutual information \(\kappa_{T,T''} = \kappa_{T,T'} + \kappa_{T',T''}\) for all admissible \(T \prec T' \prec T''\).
\end{proof}


Using theorem \ref{finiteentropytheorem} and  lemma \ref{furstenbergentropylemma}, we obtain.
\begin{corollary}[Finiteness of conditional entropies]\label{kappafinite}
 For all admissible \(T \prec T'\) one has \(\kappa_{T,T'} < +\infty\).
\end{corollary}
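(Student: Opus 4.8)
The plan is to deduce finiteness of every $\kappa_{T,T'}$ from the single estimate of Theorem~\ref{finiteentropytheorem} applied to the finest left filtration, using only the non-negativity and additivity of the fibered entropies recorded in Lemma~\ref{furstenbergentropylemma}.

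First I would recall that every admissible topology satisfies $T_1 \prec T$ and $T' \prec T_0$, so an arbitrary admissible pair $T \prec T'$ fits into the chain $T_1 \prec T \prec T' \prec T_0$. Applying the additivity statement of Lemma~\ref{furstenbergentropylemma} (equivalently part~(2) of Lemma~\ref{entropypropertieslemma}) twice along this chain gives
\[\kappa_{T_1,T_0} = \kappa_{T_1,T} + \kappa_{T,T'} + \kappa_{T',T_0}.\]
Since each summand on the right is a conditional mutual information and hence non-negative (Lemma~\ref{furstenbergentropylemma} again), this yields $\kappa_{T,T'} \le \kappa_{T_1,T_0}$, so it suffices to bound $\kappa_{T_1,T_0}$.

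Next I would identify $T_1$, the discrete topology, as the filtered topology generated by $T_0$ and the finest left filtration $L_1 = \lbrace \emptyset, \lbrace 1\rbrace, \lbrace 1,2\rbrace,\ldots,\lbrace 1,\ldots,N\rbrace\rbrace$: as in the proof of Lemma~\ref{filteredvsflaglemma} the atoms of that topology are intersections of atoms of $L_1$ with atoms of $T_0$, and the atom of $i$ is $L_1(i) \cap T_0(i) = \lbrace 1,\ldots,i\rbrace \cap \lbrace i,\ldots,N\rbrace = \lbrace i\rbrace$. Part~(3) of Lemma~\ref{entropypropertieslemma}, established through Lemma~\ref{furstenbergentropylemma} and Lemma~\ref{filteredvsflaglemma}, then identifies $\kappa_{T_1,T_0} = \kappa(\mu,\nu_{L_1})$, and Theorem~\ref{finiteentropytheorem} bounds this by $\sum_{j \notin L_1(i)} d_i d_j(\chi_i-\chi_j) = \sum_{0 < i < j \le N} d_id_j(\chi_i-\chi_j)$. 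Since $\mu$ has finite first moment the Lyapunov exponents $\chi_1,\ldots,\chi_N$ are all finite, so this sum is finite and we conclude $\kappa_{T,T'} \le \kappa_{T_1,T_0} < +\infty$.

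There is no genuinely hard step here: the real content sits in Theorem~\ref{finiteentropytheorem} and in the mutual-information formalism of Lemma~\ref{furstenbergentropylemma}. The only points deserving a line of verification are that the discrete topology $T_1$ is the filtered topology attached to $L_1$ (so that the flag-space bound transfers to $\kappa_{T_1,T_0}$), and that the additivity and non-negativity invoked above are legitimate as statements in $[0,+\infty]$ — which is automatic, since the chain rule for conditional mutual information holds with values in $[0,+\infty]$.
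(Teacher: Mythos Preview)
Your proof is correct and follows exactly the approach the paper intends: the paper's proof is the one-line remark ``Using theorem~\ref{finiteentropytheorem} and lemma~\ref{furstenbergentropylemma}, we obtain,'' and you have simply unpacked that into the chain $T_1 \prec T \prec T' \prec T_0$, the additivity/non-negativity bound $\kappa_{T,T'} \le \kappa_{T_1,T_0}$, and the identification $\kappa_{T_1,T_0} = \kappa(\mu,\nu_{L_1})$ to which the entropy--exponent inequality applies. Your verification that $T_1$ is the filtered topology attached to $L_1$ and your remark that the chain rule holds in $[0,+\infty]$ are both appropriate details to include.
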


Another consequence  of lemma \ref{furstenbergentropylemma} is 
\begin{corollary}
 If \(L\) is a left filtration and \(T\) is generated by \(T_0\) and \(L\) then \(\kappa(\mu,\nu_L) = \kappa_{T,T_0}\).
\end{corollary}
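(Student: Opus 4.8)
The plan is to reduce the claim, via lemma \ref{furstenbergentropylemma}, to an identity between a conditional and an unconditional mutual information, and then to eliminate the conditioning. Applying lemma \ref{furstenbergentropylemma} twice gives \(\kappa(\mu,\nu_L) = I(g_{-1},E_L)\) and, since \(T \prec T_0\), \(\kappa_{T,T_0} = I(g_{-1},E_T\mid E_{T_0})\) (this is equation (\ref{entropyT})). So it suffices to prove
\[ I(g_{-1},E_T\mid E_{T_0}) \;=\; I(g_{-1},E_L). \]

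To remove the conditioning I would proceed in two moves. First, recall from the proof of lemma \ref{filteredvsflaglemma} that for \(\nu_{T_0}\)-a.e.\ \(y\) the map \(x \mapsto F_L(x,y)\) is a bi-measurable (indeed locally bi-Lipschitz) bijection from the full-\(\nu_L\)-measure set \(\F_L^y\) onto a full-measure subset of the fiber \(\pi_{T,T_0}^{-1}(y)\), and that \(E_T(\omega) = F_L(E_L(\omega),E_{T_0}(\omega))\) for \(m\)-a.e.\ \(\omega\). Since mutual information is unchanged when one of its arguments is replaced by a (fiberwise) bi-measurable injective image of it, applying this conditionally on the value of \(E_{T_0}\) and integrating yields \(I(g_{-1},E_T\mid E_{T_0}) = I(g_{-1},E_L\mid E_{T_0})\). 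Second, observe that \(g_{-1}\) and \(E_L\) are \(\sigma(g_{-1},g_{-2},\ldots)\)-measurable while \(E_{T_0}\) is \(\sigma(g_0,g_1,\ldots)\)-measurable, so under the Bernoulli measure \(m=\mu^{\Z}\) the pair \((g_{-1},E_L)\) is independent of \(E_{T_0}\); hence the relevant conditional distributions agree with the unconditional ones and \(I(g_{-1},E_L\mid E_{T_0}) = I(g_{-1},E_L)\). Chaining the three equalities gives \(\kappa_{T,T_0} = \kappa(\mu,\nu_L)\).

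I expect the only genuine subtlety to be the invariance-under-bijections step: one must make sure \(F_L(\cdot,y)\) is used only where it is defined and bi-measurable, i.e.\ on the general-position set \(\F_L^y\); but this set has full \(\nu_L\)-measure for \(\nu_{T_0}\)-a.e.\ \(y\) (as established in lemma \ref{filteredvsflaglemma}), which is all that is needed since mutual information depends only on joint laws. Alternatively, one may simply observe that this corollary coincides with part (3) of lemma \ref{entropypropertieslemma}, which falls out of the same chain-rule computation in the proof of lemma \ref{furstenbergentropylemma} that produced (\ref{entropyT}).
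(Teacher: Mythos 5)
Your proposal is correct and follows essentially the same route as the paper: convert both sides to (conditional) mutual information via Lemma \ref{furstenbergentropylemma}, replace \(E_T\) by \(E_L\) using the fiberwise bijection \(F_L\) from Lemma \ref{filteredvsflaglemma}, and then drop the conditioning on \(E_{T_0}\) by the independence of \(E_{T_0}\) from \((g_{-1},E_L)\). The only difference is that you spell out more explicitly the bi-measurability/a.e.\ defined nature of the fiberwise bijection (and correctly note this is all that matters, since mutual information depends only on joint laws), whereas the paper states more tersely that \(E_T = F(E_L, E_{T_0})\) for a ``measurable invertible function'' \(F\); the content is the same.
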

\begin{proof}
 From lemma \ref{furstenbergentropylemma} we have
 \[\kappa_{T,T_0} = I(g_{-1},E_T|E_{T_0}).\]
 
 From lemma \ref{filteredvsflaglemma} one has \(E_T = F(E_L,E_{T_0})\) for some measurable invertible function \(F\).  If follows that
 \[I(g_{-1},E_T|E_{T_0}) = I(g_{-1},E_L|E_{T_0}).\]
 
 Since \(E_{T_0}\) is independent from \(g_{-1}\) and \(E_L\) we obtain
 \[I(g_{-1},E_L|E_{T_0}) = I(g_{-1},E_L) = \kappa(\mu,\nu_L),\]
 as claimed.
\end{proof}

\

Let  \( \Om, \PP \)  be a probability space and \( (X,Y,Z)\) three random variables. If the relative mutual information  \(I(X,Y|Z ) = \E_{x,y,z}[\log \frac{d\PP_{(X,Y)|Z}^z}{d\PP _{X|Z}^z d\PP_{Y|Z}^z}(x,y)]\) is finite, we can write the density \( \frac{d\PP_{(X,Y)|Z}^z}{d\PP _{X|Z}^z d\PP_{Y|Z}^z}(x,y)\) in terms of the density of the conditional measures given \((Y,Z)\):
\(  \frac{d\PP_{(X,Y)|Z}^z}{d\PP _{X|Z}^z d\PP_{Y|Z}^z}(x,y)=  \frac{d\PP_{X|(Y,Z)}^{y,z}}{d\PP_{X|Z}^z} (x) .\)

By corollary \ref{kappafinite},  \( \kappa _{T, T_0} = I(g_{-1}, E_T|E_{T_0})\) is finite.  The projection \( \om \in \Om  \mapsto E_T(\om ) \in \X_T \) admits disintegrations that we denote \( m_T^{x}.\) We still denote by \( m_T^{x}\) the projection  of \( m_T^{x}\)  to \(\Om_-\).
Since \(E_{T_0}(\om)  \) is independent from \( g_{-1}(\om) \) and is measurable with respect to \(E_T(\om),\) we obtain the  following  formulas for \( \kappa _{T, T_0} :\)
\begin{equation}\label{kappa3} \kappa _{T, T_0} \; = \;   \int \left(\log\frac{dm_T^{E_T(\om)}|_{g_{-1}}}{d\mu} (g_{-1}(\om ))\right) \, dm(\om )\
\end{equation} 
\begin{proposition}\label{kappa4}  We have, for \(m\)-a.e. \(\om \),  \[\kappa _{T, T_0} = \lim\limits _{n \to \infty } \frac{1}{n} \log \frac{dm_T^{E_T(\om )}|_{\{g_{-1}, \ldots, g_{-n}\}}}{d\otimes _1^n \mu} (g_{-1}(\om ), \ldots, g_{-n}(\om )).\] \end{proposition}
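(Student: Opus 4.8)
The plan is to recognize the density under the limit as the relative information density of $n$ successive independent steps conditioned on a \emph{backward‑measurable} configuration, and then to realize its logarithm as an honest Birkhoff sum, so that Proposition \ref{kappa4} reduces to the pointwise ergodic theorem. I will write $D_n(\om)$ for the density appearing in the statement.

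First I would rewrite $D_n$ in terms of $E_L$. By definition $m_T^{E_T(\om)}|_{\{g_{-1},\ldots,g_{-n}\}}$ is the conditional law of $(g_{-1}(\om),\ldots,g_{-n}(\om))$ given $E_T(\om)$. When $T$ is the filtered topology generated by $T_0$ and a left filtration $L$, the proof of Lemma \ref{filteredvsflaglemma} provides a bijection identifying $E_T$ with the pair $(E_L,E_{T_0})$ on a set of full measure, so that $\sigma(E_T)=\sigma(E_L)\vee\sigma(E_{T_0})$ modulo $m$‑null sets; for a general admissible $T$ one needs the analogous statement that, once $E_{T_0}$ is fixed, the remaining coordinates of $E_T$ are measurable with respect to $(g_{-j})_{j\ge1}$, which follows by passing to the successive quotients by the pieces of the stable flag and using that the top Oseledets subspace of the quotient cocycle is backward‑measurable. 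Since $(g_{-1},\ldots,g_{-n})$ and $E_L$ are measurable with respect to $(g_{-j})_{j\ge1}$ while $E_{T_0}$ is measurable with respect to $(g_j)_{j\ge0}$, and these two families are independent under $m=\mu^{\Z}$, the conditional law of $(g_{-1},\ldots,g_{-n})$ given $E_T$ coincides with the one given $E_L$ alone, and $g_{-1},\ldots,g_{-n}$ have joint law $\otimes_1^n\mu$. Thus $D_n$ is the $\otimes_1^n\mu$‑density of the conditional law of $(g_{-1},\ldots,g_{-n})$ given $E_L$ evaluated along $\om$, and it exists $m$‑a.e.\ because $\kappa(\mu,\nu_L)<+\infty$ by Theorem \ref{finiteentropytheorem}.

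Next I would apply the chain rule for conditional laws to factor $\log D_n=\sum_{k=1}^n\phi_k$, where $\phi_k(\om)$ is the logarithm of the $\mu$‑density of the conditional law of $g_{-k}$ given $(g_{-1},\ldots,g_{-(k-1)},E_L)$, evaluated at $g_{-k}(\om)$. The crucial point is that every $\phi_k$ is a single fixed function composed with an iterate of the shift. Indeed, by shift‑invariance of $m$ the joint law of $(g_{-k},g_{-1},\ldots,g_{-(k-1)},E_L)$ equals that of $(g_{-1},g_{k-2},\ldots,g_0,g_0^{k-1}E_L)$; since $g_0^{k-1}$ is an invertible function of $g_0,\ldots,g_{k-2}$, conditioning on $(g_0,\ldots,g_{k-2},g_0^{k-1}E_L)$ is conditioning on $(g_0,\ldots,g_{k-2},E_L)$, which---because $(g_0,\ldots,g_{k-2})$ is independent of $(g_{-1},E_L)$---is conditioning on $E_L$ alone. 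Tracking this through, with $E_L(\sigma^{1-k}\om)=g_{-(k-1)}^0(\om)^{-1}E_L(\om)$ and $g_{-k}(\om)=g_{-1}(\sigma^{1-k}\om)$, I would obtain $\phi_k=\Phi\circ\sigma^{1-k}$, where $\Phi(\om)=\log\frac{d(\text{conditional law of }g_{-1}\text{ given }E_L)^{E_L(\om)}}{d\mu}(g_{-1}(\om))$, hence $\log D_n=\sum_{j=0}^{n-1}\Phi\circ\sigma^{-j}$.

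It then remains to check that $\Phi\in L^1(m)$---its integral is $I(g_{-1},E_L)=\kappa(\mu,\nu_L)<+\infty$ by Lemma \ref{furstenbergentropylemma} and Theorem \ref{finiteentropytheorem}, and its negative part integrates to at most $1$ since $(\log\frac1\rho)^+\le\frac1\rho\,\mathbf 1_{\{\rho<1\}}$ for any density $\rho$---and then to invoke the Birkhoff ergodic theorem for $\sigma^{-1}$, for which $m$ is still ergodic: $\frac1n\log D_n=\frac1n\sum_{j=0}^{n-1}\Phi\circ\sigma^{-j}\to\int\Phi\,dm=\kappa(\mu,\nu_L)=\kappa_{T,T_0}$ for $m$‑a.e.\ $\om$, the last equality being the identification of $\kappa_{T,T_0}$ with $\kappa(\mu,\nu_L)$ recorded after Lemma \ref{furstenbergentropylemma}. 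The only step that is not pure bookkeeping is the reduction of the conditioning from $E_T$ to the backward‑measurable $E_L$ in the general admissible case (for filtered $T$ this is already Lemma \ref{filteredvsflaglemma}); once that is granted, no martingale‑convergence input is needed, precisely because passing to $E_L$ strips all $k$‑dependence from the summands $\phi_k$.
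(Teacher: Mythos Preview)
Your approach is the same as the paper's: write $\log D_n$ as a Birkhoff sum of the integrand in (\ref{kappa3}) and invoke the pointwise ergodic theorem. The paper records only this one line and defers the verification of the Birkhoff-sum identity to \cite[Proposition 6.4]{ll1}; you supply the details, and your integrability argument for $\Phi$ is fine.

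There is, however, a small but genuine gap in your treatment of the general admissible case. Your chain-rule computation is written entirely in terms of a backward-measurable flag $E_L$ and uses that $(g_0,\ldots,g_{k-2})$ is independent of $(g_{-1},E_L)$; at the end you invoke $\kappa_{T,T_0}=\kappa(\mu,\nu_L)$, which is only available when $T$ is filtered. For a general admissible $T$ there is no left filtration $L$ with $\sigma(E_T)=\sigma(E_L)\vee\sigma(E_{T_0})$, so the detour through $E_L$ cannot be completed as stated. What you actually need---and what makes the Birkhoff identity $\phi_k=\Phi\circ\sigma^{1-k}$ work directly with $E_T$ in place of $E_L$---is the conditional independence $\mathcal{B}_-\perp\mathcal{B}_+\mid E_T$, where $\mathcal{B}_\pm$ are the past and future $\sigma$-algebras. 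This follows immediately from the two facts the paper records before (\ref{kappa3}): $E_{T_0}$ is $E_T$-measurable, and $\sigma(E_T)\subset\mathcal{B}_-\vee\sigma(E_{T_0})$ (since $E_T$ is a function of $(E_{L_1},E_{T_0})$). Indeed, for $A\in\mathcal{B}_-$ and $B\in\mathcal{B}_+$ one computes $P(A\cap B\mid E_T)=E[\mathbf 1_A\,P(B\mid \mathcal{B}_-\vee\sigma(E_{T_0}))\mid E_T]=P(B\mid E_{T_0})\,P(A\mid E_T)$, and similarly $P(B\mid E_T)=P(B\mid E_{T_0})$. With this in hand your shift argument goes through verbatim with $E_T$ replacing $E_L$, and no reduction to a flag variable is needed.
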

\begin{proof} Apply Birkhoff ergodic theorem to the integrand in (\ref{kappa3}). See \cite[Proposition 6.4]{ll1}. \end{proof}
\begin{corollary}\label{kappa6} Let \( T \prec T' \) be admissible topologies.   We have, for \(m\)-a.e. \(\om \),  \[\kappa _{T, T'} = \lim\limits _{n \to \infty } \frac{1}{n} \log \frac{dm_T^{E_T(\om )}|_{\{g_{-1}, \ldots, g_{-n}\}}}{dm_{T'}^{E_{T'}(\om )}|_{\{g_{-1}, \ldots, g_{-n}\}}}.\] \end{corollary}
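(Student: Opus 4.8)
The plan is to deduce this relative formula by subtracting two instances of Proposition \ref{kappa4}: one for the pair $(T,T_0)$ and one for the pair $(T',T_0)$, and then to rewrite the resulting difference of absolute entropies as $\kappa_{T,T'}$ using additivity.

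Since $T_0$ is the coarsest admissible topology we have $T \prec T' \prec T_0$, so Proposition \ref{kappa4} applies verbatim to $T'$ as well as to $T$. Set
\[h_n(\om) = \frac{dm_T^{E_T(\om)}|_{\{g_{-1},\ldots,g_{-n}\}}}{d\otimes_1^n\mu}\big(g_{-1}(\om),\ldots,g_{-n}(\om)\big),\]
and let $h'_n(\om)$ be given by the same expression with $T$ replaced by $T'$ (and $m_T^{E_T(\om)}$ by $m_{T'}^{E_{T'}(\om)}$). By Proposition \ref{kappa4}, for $m$-a.e. $\om$ one has $\tfrac1n\log h_n(\om) \to \kappa_{T,T_0}$ and $\tfrac1n\log h'_n(\om) \to \kappa_{T',T_0}$. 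Both limits are finite by Corollary \ref{kappafinite}, so $\tfrac1n\big(\log h_n(\om) - \log h'_n(\om)\big)$ converges $m$-a.e. to $\kappa_{T,T_0}-\kappa_{T',T_0}$, which equals $\kappa_{T,T'}$ by the additivity of the fibered entropies (lemma \ref{furstenbergentropylemma}, applied with $T \prec T' \prec T_0$). Since the reference measure $\otimes_1^n\mu$ cancels in the ratio $h_n(\om)/h'_n(\om)$, this is exactly the asserted identity, once one reads the relative density in the statement as that ratio.

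The remaining point is to check that $h_n(\om)/h'_n(\om)$ is indeed the relative density $\frac{dm_T^{E_T(\om)}|_{\{g_{-1},\ldots,g_{-n}\}}}{dm_{T'}^{E_{T'}(\om)}|_{\{g_{-1},\ldots,g_{-n}\}}}$ evaluated at $(g_{-1}(\om),\ldots,g_{-n}(\om))$. For this I would note that $h'_n(\om) > 0$ for $m$-a.e. $\om$, since a random vector almost surely avoids the zero set of its own regular conditional density: writing $\rho$ for the density of $m_{T'}^{E_{T'}(\om)}|_{\{g_{-1},\ldots,g_{-n}\}}$ with respect to $\otimes_1^n\mu$,
\[m\big(h'_n = 0\big) = \E\Big[\int \mathbf 1\{\rho = 0\}\,\rho\, d\otimes_1^n\mu\Big] = 0,\]
and likewise $h_n(\om) > 0$ a.e. The chain rule for Radon--Nikodym derivatives with respect to $\otimes_1^n\mu$ then identifies the ratio with the relative density wherever the denominator is positive, completing the reduction. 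I do not anticipate a genuine obstacle here: all of the substance is already contained in Proposition \ref{kappa4} and in the additivity of the $\kappa_{T,T'}$, and the only delicate input is the finiteness supplied by Corollary \ref{kappafinite}, which is precisely what makes the passage from a limit of differences to a difference of limits legitimate.
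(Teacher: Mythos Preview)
Your proof is correct and is exactly the argument the paper has in mind: the corollary is stated immediately after Proposition~\ref{kappa4} with no separate proof, so the intended deduction is precisely to apply that proposition to both $T$ and $T'$, subtract, and invoke the additivity $\kappa_{T,T_0}=\kappa_{T,T'}+\kappa_{T',T_0}$. Your extra care in justifying that the denominator density is a.e.\ positive and that the ratio of densities against $\otimes_1^n\mu$ equals the relative density is a welcome bit of hygiene that the paper leaves implicit.
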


In the case when the measure \( \mu\) is discrete, the  formula in corollary  \ref{kappa6} becomes, for \(m\)-a.e. \(\om \),    \begin{equation}\label{kappa7} \kappa _{T, T'} = \lim\limits _{n \to \infty } \frac{1}{n} \log \frac{m_T^{E_T(\om )}([g_{-1}(\om), \ldots, g_{-n}(\om)])}{m_{T'}^{E_{T'}(\om )}([g_{-1}(\om), \ldots, g_{-n}(\om)])},\end{equation}
where, for \( (g_{-1}, \ldots, g_{-n}) \in G^n, \)  \([g_{-1}, \ldots, g_{-n}]\) is the cylinder \( \{ \{ h_q \}_{q \in \Z} \in \Om: h_q = g_q {\textrm{ for }} -n \leq q \leq -1 \}.\)

\section{Exact dimension on one step bundles}\label{onestep}

We fix from now on \(T \onefiner T'\) and \(i < j\) so that \(T(i) = T'(i) \setminus \lbrace j\rbrace\).  To simplify notation set
\[x(\omega) = E_T(\omega), x'(\omega) = E_{T'}(\omega), \nu_\omega = \nu_{T,T'}^{x'(\omega)}, \kappa = \kappa_{T,T'}\text{ and }\chi = \chi_{T,T'}.\]

We use \(V(\omega)\) for the perpendicular splitting compatible with
\(x'(\omega)\) given by lemma \ref{perpcompatiblesplitting} and endow 
\(\X_{T,T'}^{x'(\omega})\) with the distance induced by \(\varphi_{V(\omega)}\) and 
the norm defined in section \ref{innerproductsection}.
Let \(B_\omega(x,r)\) denote the ball of radius \(r\) centered at \(x \in \X_{T,T'}^{x'(\omega)}\) with respect to this norm.

We recall that \(A_m^n(\omega)\) is the linear part of the affine mapping \(\varphi_{V(\sigma^{n}(\omega))}g_m^n(\omega)\varphi_{V(\sigma^m(\omega))}\) as in lemma \ref{affineonesteplemma}. 
\begin{lemma}[Main lemma]\label{mainonesteplemma}
  If
  \[\int\limits_{\Omega}\log(s_{d_id_j}(A_0^{1}(\omega))) dm(\omega) = -I_{d_id_j} \le -I_1 = \int\limits_{\Omega}\log(s_1(A_0^{1}(\omega)))dm(\omega) < 0,\]
  then
  \[\frac{\kappa}{I_{d_id_j}} \le \underline{\dim}(\nu) \le \overline{\dim}(\nu) \le \frac{\kappa}{I_1}.\]
\end{lemma}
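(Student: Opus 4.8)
The plan is to prove the two inequalities $\underline{\dim}(\nu)\ge \kappa/I_{d_id_j}$ and $\overline{\dim}(\nu)\le \kappa/I_1$ for $m$-a.e.\ $\om$, by a Shannon--McMillan/entropy-versus-exponent argument. Since the joint law of $(\om,E_T(\om))$ disintegrates over $m$ as $\de_\om\otimes\nu_\om$, it suffices to show that for $m$-a.e.\ $\om$ one has $\kappa/I_{d_id_j}\le\liminf_{r\to0}\log\nu_\om(B_\om(E_T(\om),r))/\log r$ and $\limsup_{r\to0}\log\nu_\om(B_\om(E_T(\om),r))/\log r\le \kappa/I_1$. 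Two ingredients go into this. The first is an \emph{entropy estimate}: letting $\nu_\om^{(n)}$ be the conditional law of $E_T(\om)$ given $E_{T'}(\om)$ and $g_{-1}(\om),\dots,g_{-n}(\om)$, and writing $\nu_\om=\int \nu_\om^{(n),a}\,d\la_\om^n(a)$ for the disintegration over the values $a=(g_{-1},\dots,g_{-n})$ (with $\la_\om^n=m_{T'}^{E_{T'}(\om)}|_{\{g_{-1},\dots,g_{-n}\}}$ and $\widetilde\la_\om^n=m_T^{E_T(\om)}|_{\{g_{-1},\dots,g_{-n}\}}$ the conditional laws of $a$ given $E_{T'}(\om)$, resp.\ $E_T(\om)$), the duality of disintegrations identifies $\tfrac{d\nu_\om^{(n),a}}{d\nu_\om}(E_T(\om))$ with $h_n(a):=\tfrac{d\widetilde\la_\om^n}{d\la_\om^n}(a)$, so Corollary \ref{kappa6} gives $\tfrac1n\log\tfrac{d\nu_\om^{(n)}}{d\nu_\om}(E_T(\om))\to\kappa$ a.e., and moreover $\widetilde\la_\om^n(\{|\tfrac1n\log h_n-\kappa|\ge\e\})\to0$ a.e.\ by integrating this over the law of $E_T$. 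The second is a \emph{geometric estimate}: in the coordinates $\varphi_{V(\om)}$ one has $\nu_\om^{(n)}=(F_{-n}^0(\om))_*\nu_{\s^{-n}\om}$, where by Lemma \ref{affineonesteplemma} the map $F_{-n}^0(\om)$ is affine with linear part $A_{-n}^0(\om)$; iterating its first-step bounds and applying Birkhoff's theorem to $\log s_1(A_0^1)$ and $\log s_{d_id_j}(A_0^1)$ yields $e^{-n(I_{d_id_j}+\e)}\le s_{d_id_j}(A_{-n}^0(\om))\le s_1(A_{-n}^0(\om))\le e^{-n(I_1-\e)}$ a.e.\ for $n$ large, while Lemma \ref{approximationlemma} gives $E_T(\om)=\lim_n F_{-n}^0(\om)(0)$ and places $E_T(\om)$ in the support of $\nu_\om^{(n)}$.

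For the upper bound $\overline{\dim}(\nu)\le\kappa/I_1$, fix $\e>0$ and consider the set $\mathcal G_n(\om)$ of values $a$ such that $h_n(a)\le e^{n(\kappa+\e)}$, the support of $\nu_\om^{(n),a}$ contains $E_T(\om)$, and $\nu_\om^{(n),a}$ has diameter at most $e^{-n(I_1-\e)}$. Each of these three conditions holds with $\widetilde\la_\om^n$-probability tending to $1$ (the first by the entropy estimate, the second $\widetilde\la_\om^n$-a.s., the third from the singular value bound on $A_{-n}^0$ together with the sub-exponential control of the "size" of $\nu_{\s^{-n}\om}$ in its own coordinates discussed below), so $\widetilde\la_\om^n(\mathcal G_n(\om))\ge\tfrac12$ for $n$ large. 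By the reverse Chebyshev inequality $\la_\om^n(\mathcal G_n(\om))=\int_{\mathcal G_n(\om)}h_n^{-1}\,d\widetilde\la_\om^n\ge e^{-n(\kappa+\e)}\widetilde\la_\om^n(\mathcal G_n(\om))\ge \tfrac12 e^{-n(\kappa+\e)}$, and since for $a\in\mathcal G_n(\om)$ the whole branch $\nu_\om^{(n),a}$ lies inside $B_\om(E_T(\om),e^{-n(I_1-\e)})$ we get $\nu_\om(B_\om(E_T(\om),e^{-n(I_1-\e)}))\ge\la_\om^n(\mathcal G_n(\om))\ge\tfrac12 e^{-n(\kappa+\e)}$. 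Interpolating over $r\in[e^{-(n+1)(I_1-\e)},e^{-n(I_1-\e)}]$ (harmless since consecutive scales have bounded ratio) this gives $\limsup_{r\to0}\log\nu_\om(B_\om(E_T(\om),r))/\log r\le(\kappa+\e)/(I_1-\e)$, and $\e\downarrow0$ finishes this half.

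For the lower bound $\underline{\dim}(\nu)\ge\kappa/I_{d_id_j}$ the argument is symmetric: by the lower bound $s_{d_id_j}(A_{-n}^0)\ge e^{-n(I_{d_id_j}+\e)}$, outside an exceptional set the centres $F_{-n}^{0,a}(\om)(0)$ of distinct branches are $e^{-n(I_{d_id_j}+\e)}$-separated (more precisely, once $E_T(\om)$ is in the support of a branch, that branch is a pushforward of a conditional measure of the same family under an affine map expanding by at least $e^{n(I_{d_id_j}+\e)}$ under the inverse), so for $r=e^{-n(I_{d_id_j}+\e)}$ only branches whose support comes within $O(r)$ of $E_T(\om)$ meet $B_\om(E_T(\om),r)$. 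The $\la_\om^n$-mass of the set $\mathcal B_{n,r}(\om)$ of such branches is at most $e^{-n(\kappa-\e)}$: split according to whether $h_n(a)\le e^{n(\kappa-\e)}$ or not; on the first set the total $\widetilde\la$-mass of branches near $E_T(\om)$ is at most $e^{n(\kappa-\e)}$ times their $\la$-mass and is bounded by $1$, forcing the $\la$-mass to be at most $e^{-n(\kappa-\e)}$, and the second set carries negligible $\widetilde\la$-mass by the entropy estimate and carries negligible $\la$-mass by Markov's inequality applied to $h_n$. Hence $\nu_\om(B_\om(E_T(\om),r))\le\la_\om^n(\mathcal B_{n,r}(\om))\le e^{-n(\kappa-\e)}$, and interpolating as before gives $\liminf_{r\to0}\log\nu_\om(B_\om(E_T(\om),r))/\log r\ge(\kappa-\e)/(I_{d_id_j}+\e)$, whence the claim on letting $\e\downarrow0$.

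The main obstacle, and the reason the argument is more delicate than a bare Shannon--McMillan computation, is that the fibre $\X_{T,T'}^{x'(\om)}$ and the coordinate chart $\varphi_{V(\om)}$ are not compact, so the support of $\nu_{\s^{-n}\om}$ need not be bounded; consequently the branch diameters and separations come with $\om$-dependent sub-exponential errors that must be shown to grow sub-exponentially along $m$-a.e.\ orbit. This is handled by a Borel--Cantelli argument applied to the $m$-a.e.\ finite radius capturing, say, half the mass of $\nu_\om$ around $E_T(\om)$, using the integrability of logarithms supplied by the finite first moment of $\mu$ and the non-exponential decay of the Oseledets angles \eqref{Oseledets}; the bookkeeping of these errors, uniformly over the tilted measures $\widetilde\la_\om^n$, is the technical heart of the proof. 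Finally, the statement only sandwiches $\dim(\nu)$ between $\kappa/I_{d_id_j}$ and $\kappa/I_1$ rather than pinning it to $\kappa/\chi$; the gap is closed in Theorem \ref{onesteptheorem} by applying this lemma to the convolution powers $\mu^{*N}$, for which $\kappa$ scales by $N$ while $\tfrac1N I_1^{(N)}$ and $\tfrac1N I_{d_id_j}^{(N)}$ both converge to $\chi$, and letting $N\to\infty$.
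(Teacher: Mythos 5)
Your proposal replaces the paper's telescoping ball--information argument with a Shannon--McMillan/branch-counting scheme, decomposing $\nu_\omega$ over the branches $\nu_\omega^{(n),a}$ indexed by $(g_{-1},\ldots,g_{-n})$ and comparing the ``posterior'' $\widetilde\lambda_\omega^n$ with the ``prior'' $\lambda_\omega^n$ via $h_n=d\widetilde\lambda_\omega^n/d\lambda_\omega^n$. That is a genuinely different route, and the identification of $h_n$ with the conditional density from Corollary~\ref{kappa6} is the right starting point. However, the argument as written has two real problems.

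\emph{The lower bound has a logical error.} You split $\mathcal B_{n,r}(\omega)$ by whether $h_n(a)\le e^{n(\kappa-\e)}$; on the first set you argue from $\widetilde\lambda_\omega^n(\mathcal B_1)\le e^{n(\kappa-\e)}\lambda_\omega^n(\mathcal B_1)$ and $\widetilde\lambda_\omega^n(\mathcal B_1)\le 1$ that $\lambda_\omega^n(\mathcal B_1)\le e^{-n(\kappa-\e)}$. That inference runs the wrong way: the two facts only give $\lambda_\omega^n(\mathcal B_1)\ge e^{-n(\kappa-\e)}\widetilde\lambda_\omega^n(\mathcal B_1)$, a lower bound. (If you instead put the set where $h_n\ge e^{n(\kappa-\e)}$ into $\mathcal B_1$, then $\lambda_\omega^n(\mathcal B_1)\le e^{-n(\kappa-\e)}\widetilde\lambda_\omega^n(\mathcal B_1)\le e^{-n(\kappa-\e)}$ does follow, but then the \emph{complementary} piece $\{h_n<e^{n(\kappa-\e)}\}\cap\mathcal B_{n,r}$ is the one Markov's inequality no longer controls, and one would need a separate geometric input forcing $h_n$ to be large on branches that come close to $E_T(\omega)$ --- which you do not supply.) As it stands neither split closes.

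\emph{The upper bound rests on an unproven diameter control.} The membership condition ``$\nu_\omega^{(n),a}$ has diameter at most $e^{-n(I_1-\e)}$'' requires, beyond the singular-value bound $s_1(A_{-n}^{0,a})\lesssim e^{-n(I_1-\e)}$, that the measure $\nu_{\sigma^{-n},a}$ (in the perpendicular coordinates of its own fibre) have sub-exponentially bounded essential support. You acknowledge this and invoke a Borel--Cantelli argument on the ``$m$-a.e.\ finite radius capturing half the mass of $\nu_\omega$ around $E_T(\omega)$,'' but Borel--Cantelli along the orbit requires $\int\log^{+}R\,dm<\infty$ for such a radius $R(\omega)$, and nothing in the hypotheses (finite first moment, Oseledets angle control) visibly yields that integrability; the fibres are noncompact and $R$ could be heavy-tailed. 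There is also a subtler point: the singular-value estimate is a Birkhoff/Oseledets statement for the \emph{actual} trajectory $a=(g_{-1}(\omega),\ldots,g_{-n}(\omega))$, whereas you need it for $\widetilde\lambda_\omega^n$-typical $a$; this can be patched by disintegrating $m$, but it is another unaddressed step.

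For comparison, the paper avoids both issues. It introduces the ball--information functions
$f_r(\omega)=\log\bigl(\nu_{\sigma^{-1}\omega}(g_{-1}(\omega)^{-1}B_\omega(x(\omega),r))/\nu_\omega(B_\omega(x(\omega),r))\bigr)$,
proves via a fibrewise maximal inequality (Lemma~\ref{makerapproximationlemma}, using \cite[Lemma~9]{lessa}) that $\sup_r|f_r|\in L^1(m)$ and $f_r\to f$ with $\int f\,dm=\kappa$, and then applies Maker's ergodic theorem to the Birkhoff sums of $f_{r_{n-k}(\sigma^{-k}\omega)}\circ\sigma^{-k}$. Choosing $r_n(\omega)=\prod_{k=1}^n s_{d_id_j}(A_{-k}^{-(k-1)}(\omega))$, the pullbacks of balls nest, the sum telescopes, and the boundary term $\nu_{\sigma^{-n}\omega}(B(\cdot,1))\le 1$ is trivial, giving the lower bound on dimension with no support control at all. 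For the upper bound the telescoping inequality reverses and one needs $\nu_{\sigma^{-n}\omega}(B(\cdot,1))$ bounded \emph{below}; this is handled not by Borel--Cantelli but by noting that $\nu_\omega(B_\omega(x(\omega),1))>c$ on a positive-measure set, passing to a positive-density subsequence of return times via the ergodic theorem, and invoking the subsequence remark of~\cite{young1982}. This positive-density trick is exactly the quantitatively weak statement that suffices, and is much easier to establish than the sub-exponential support control your version requires. To salvage your approach you would need, at minimum, to (i) replace the diameter condition by a mass-in-a-ball condition and establish the required integrability of the log-radius, or switch to the positive-density subsequence device; and (ii) rework the lower bound, most likely by trading the branch-counting estimate for the nesting/telescoping inequality that the paper uses.
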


\subsection{Proof of theorem \ref{onesteptheorem}}

We prove theorem \ref{onesteptheorem} assuming lemma \ref{mainonesteplemma}.

Given \(\epsilon > 0\) by lemma \ref{affineonesteplemma} we may pick \(K\) so that
\[-(\chi + \epsilon)K \le \int\limits_{\Omega}s_{d_id_j}(A_0^K(\omega))dm(\omega) \le \int\limits_{\Omega}s_1(A_0^K(\omega)) dm(\omega) \le -(\chi-\epsilon)K.\]

If \(\epsilon < \chi\) then we may apply lemma \ref{mainonesteplemma} changing our measure \(m\) to \((\mu^{*K})^\Z\) where \(\mu^{*K}\) is the \(K\)-fold convolution of \(\mu\) with itself.

This does not change the stationary measures \(\nu\) or \(\nu'\), and the fiberwise entropies and exponents are multiplied by \(K\).  Hence, lemma \ref{mainonesteplemma} yields
\[\frac{\kappa}{\chi+\epsilon} \le \underline{\dim}(\nu) \le \overline{\dim}(\nu) \le \frac{\kappa}{\chi-\epsilon}.\]

Since this holds for \(\epsilon > 0\) arbitrarily close to zero we obtain \(\dim(\nu) = \kappa/\chi\) as claimed.

\subsection{Approximation of fiberwise entropy}

\begin{lemma}\label{onestepentropyformulalemma}
  Let \(f:\Omega \to \R\) be defined by
  \[f(\omega) = \log\left(\frac{dg_{-1}(\omega)_*\nu_{\sigma^{-1}(\omega)}}{d\nu_{\omega}}\left(x(\omega)\right)\right).\]

  Then \(f\) is \(m\)-integrable and \(\int f dm = \kappa\).
\end{lemma}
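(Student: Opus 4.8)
The plan is to recognize $f$ as the integrand defining $\kappa_{T,T'}$ in (\ref{kappa1}), precomposed with the shift $\sigma^{-1}$, so that the identity $\int f\,dm=\kappa$ becomes a change of variables. Concretely, I would use the equivariance relations $g_0(\eta)E_T(\eta)=E_T(\sigma\eta)$ and $g_0(\eta)E_{T'}(\eta)=E_{T'}(\sigma\eta)$, valid for $m$-a.e.\ $\eta$, together with $g_0(\sigma^{-1}\omega)=g_{-1}(\omega)$. Applied at $\eta=\sigma^{-1}\omega$ these give $g_{-1}(\omega)E_{T'}(\sigma^{-1}\omega)=E_{T'}(\omega)$, hence $\nu_{T,T'}^{g_0(\sigma^{-1}\omega)E_{T'}(\sigma^{-1}\omega)}=\nu_\omega$, and $g_{-1}(\omega)E_T(\sigma^{-1}\omega)=x(\omega)$; substituting $\omega\mapsto\sigma^{-1}\omega$ in the integrand of (\ref{kappa1}) therefore turns it into exactly $f(\omega)$. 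Since $m$ is $\sigma$-invariant this yields $\int f\,dm=\kappa_{T,T'}=\kappa$, provided $f$ is $m$-integrable, which is the point that requires a separate argument.

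For integrability I would identify $f$ with the integrand of the conditional mutual information $I(g_{-1},E_T\mid E_{T'})$. By definition of the disintegration, $\nu_\omega=\nu_{T,T'}^{E_{T'}(\omega)}$ is a version of the conditional law of $E_T(\omega)$ given $\sigma(E_{T'}(\omega))$; and I claim that $g_{-1}(\omega)_*\nu_{\sigma^{-1}\omega}$ is a version of the conditional law of $x(\omega)=E_T(\omega)$ given $\sigma(g_{-1}(\omega),E_{T'}(\omega))$. Granting this, $f$ is a version of $\log\bigl(dP_{E_T\mid g_{-1},E_{T'}}/dP_{E_T\mid E_{T'}}\bigr)(E_T)$, whose integral is $I(g_{-1},E_T\mid E_{T'})$; by Lemma \ref{furstenbergentropylemma} this equals $\kappa_{T,T'}$, which is finite by Corollary \ref{kappafinite}. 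Finiteness of a conditional mutual information forces its defining integrand to be integrable: the negative part is bounded by $e^{-1}$ pointwise through the elementary inequality $-t\log t\le e^{-1}$ on $(0,1]$ applied to a fibrewise density against a probability measure, and then finiteness of the integral shows the positive part is integrable too. (Equivalently one can run the $-t\log t$ bound directly on the integrand of (\ref{kappa1}) and combine it with $\int(\text{that integrand})\,dm=\kappa<+\infty$.)

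The step that will require care is the conditional-law claim of the previous paragraph. Writing $x(\omega)=g_{-1}(\omega)\,E_T(\sigma^{-1}\omega)$ by equivariance, and noting that given $g_{-1}(\omega)$ the configurations $E_{T'}(\omega)$ and $E_{T'}(\sigma^{-1}\omega)$ determine one another, the claim reduces to the conditional independence $E_T(\sigma^{-1}\omega)\perp g_{-1}(\omega)\mid E_{T'}(\sigma^{-1}\omega)$. I would deduce this from the facts that an Oseledets configuration is an $\SL_d(\R)$-equivariant linear-algebraic function of its coarsening $E_{T'}$ together with the stable flag $(E_1\oplus\cdots\oplus E_k)_k$, that the stable flag at a given time is measurable with respect to the strictly earlier coordinates and hence independent of $g_{-1}(\omega)$ and of all later coordinates, and that admissibility of $T'$ is exactly what lets one condition on $E_{T'}(\sigma^{-1}\omega)$ without coupling the relevant part of that stable flag to $g_{-1}(\omega)$. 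This is essentially \cite[Proposition 6.2]{ll1}, which already underlies Lemma \ref{furstenbergentropylemma}; everything else is routine bookkeeping with the shift and the cocycle relations, and I expect this conditional-independence verification to be the only real obstacle.
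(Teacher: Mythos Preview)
Your proposal is correct and is essentially a detailed unpacking of the paper's one-line proof, which simply cites Lemma~\ref{entropypropertieslemma}: the shift change of variables identifies $f$ with the integrand of~(\ref{kappa1}), and integrability follows from the mutual-information interpretation (Lemma~\ref{furstenbergentropylemma}) together with finiteness (Corollary~\ref{kappafinite}), exactly as you describe. One minor slip: the flag $(E_1\oplus\cdots\oplus E_k)_k$ is the \emph{unstable} flag, not the stable one, but your measurability claim (dependence only on strictly earlier coordinates) is correct for the unstable flag, so the conditional-independence argument goes through---the key point being that $E_{T'}$ determines $E_{T_0}$, so conditioning on $E_{T'}(\sigma^{-1}\omega)$ factors into a future-measurable constraint on the stable flag and a past-measurable constraint on the unstable flag.
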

\begin{proof}
  This follows immediately from lemma \ref{entropypropertieslemma}.
\end{proof}

In order to prove lemma \ref{mainonesteplemma}, we will approximate the function \( f\) by its averages over balls in \(\X_{T,T'}^{x'(\om)}\).

\begin{lemma}\label{makerapproximationlemma}
  For each \(r > 0\) let \(f_r:\Omega \to \R\) be defined by
  \[f_r(\omega) = \log\left(\frac{\nu_{\sigma^{-1}(\omega)}\left(g_{-1}(\omega)^{-1}B_{\omega}(x(\omega),r)\right)}{\nu_{\omega}\left(B_\omega(x(\omega),r)\right)}\right).\]

Then \(\sup\limits_{r > 0}|f_r|\) is \(m\)-integrable and \(\lim\limits_{r \downarrow 0}f_r(\omega) = f(\omega)\) for \(m\)-a.e. \(\omega \in \Omega\).
\end{lemma}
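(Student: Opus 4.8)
The plan is to rewrite $f_r$ as the logarithm of a ball-average of a single Radon--Nikodym density, then invoke the Lebesgue differentiation theorem on the fibers for the pointwise convergence and a maximal inequality for the domination. Since $g_{-1}(\omega)$ carries the fiber $\X_{T,T'}^{x'(\sigma^{-1}(\omega))}$ bijectively onto $\X_{T,T'}^{x'(\omega)}$ and $g_{-1}(\omega)^{-1}B_\omega(x(\omega),r)$ is exactly the preimage of the ball, one has
\[f_r(\omega) = \log\frac{\big(g_{-1}(\omega)\big)_*\nu_{\sigma^{-1}(\omega)}\big(B_\omega(x(\omega),r)\big)}{\nu_\omega\big(B_\omega(x(\omega),r)\big)}.\]
By corollary \ref{kappafinite}, $\kappa < +\infty$, so the density $\phi_\omega := d\big(g_{-1}(\omega)\big)_*\nu_{\sigma^{-1}(\omega)}/d\nu_\omega$ exists for $m$-a.e.\ $\omega$; then $e^{f_r(\omega)}$ is the average of $\phi_\omega$ over $B_\omega(x(\omega),r)$ with respect to $\nu_\omega$, and $f(\omega) = \log\phi_\omega(x(\omega))$. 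By lemma \ref{onestepentropyformulalemma} the function $f$ is $m$-integrable, so $f > -\infty$, i.e.\ $\phi_\omega(x(\omega)) > 0$, for $m$-a.e.\ $\omega$.

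Each fiber $\X_{T,T'}^{x'(\omega)}$, in the metric induced by $\varphi_{V(\omega)}$ and the Frobenius inner product of section \ref{innerproductsection}, is isometric to Euclidean $\R^{d_id_j}$, so the Lebesgue differentiation theorem applies and for $\nu_\omega$-a.e.\ $y$ the averages of $\phi_\omega$ over balls shrinking to $y$ converge to $\phi_\omega(y)$. To pass to $m$-a.e.\ $\omega$, note that $\nu_\omega = \nu_{T,T'}^{x'(\omega)}$ is the conditional law of $x(\omega) = E_T(\omega)$ given $E_{T'}(\omega)$, which is measurable with respect to the $\sigma$-algebra generated by $g_{-1}(\omega), g_{-2}(\omega),\ldots$; hence the conditional law of $x(\omega)$ given that $\sigma$-algebra is absolutely continuous with respect to $\nu_\omega$. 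Since the set of Lebesgue points depends only on $\nu_\omega$ and $\phi_\omega$, which are measurable with respect to that $\sigma$-algebra, a Fubini argument shows that $x(\omega)$ is $m$-a.e.\ such a point, whence $f_r(\omega) \to f(\omega)$.

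For the domination, the positive part is immediate from the centered Hardy--Littlewood maximal operator $M_{\nu_\omega}$ on the fiber: $e^{f_r(\omega)} \le M_{\nu_\omega}\phi_\omega(x(\omega))$, and the weak-$(1,1)$ inequality (with an absolute constant on $\R^{d_id_j}$) together with the same Fubini argument yields $m(\{\sup_{r>0} f_r > t\}) \le Ce^{-t}$, so $(\sup_{r>0} f_r)^+ \in L^1(m)$. The negative part is the main obstacle, because $\nu_\omega$ need not be absolutely continuous with respect to $\big(g_{-1}(\omega)\big)_*\nu_{\sigma^{-1}(\omega)}$ on the whole fiber, so the maximal inequality cannot be applied directly to $1/\phi_\omega$. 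Here I would use lemma \ref{affineonesteplemma}: $g_{-1}(\omega)^{-1}$ acts on the fibers by an affine map whose linear part $A_{-1}^0(\omega)$ has singular values with $m$-integrable logarithm (by shift-invariance of $m$), so $g_{-1}(\omega)^{-1}B_\omega(x(\omega),r)$ is an ellipsoid centered at $x(\sigma^{-1}(\omega))$ with semi-axes between $r/s_1(A_{-1}^0(\omega))$ and $r/s_{d_id_j}(A_{-1}^0(\omega))$. This sandwiches $-f_r(\omega)$ by $\log\nu_\omega(B_\omega(x(\omega),r)) - \log\nu_{\sigma^{-1}(\omega)}\big(B_{\sigma^{-1}(\omega)}(x(\sigma^{-1}(\omega)),c(\omega)r)\big)$ with $\log c(\omega)$ $m$-integrable, and one concludes by splitting over the radius: for small $r$, using that $m$-a.e.\ $x(\omega)$ is a Lebesgue point of both $\phi_\omega$ and $\mathbf{1}_{\{\phi_\omega=0\}}$, so that the maximal inequality applies to the density of $(\nu_\omega)|_{\{\phi_\omega>0\}}$ with respect to $\big(g_{-1}(\omega)\big)_*\nu_{\sigma^{-1}(\omega)}$; and for large $r$, using that both probability measures have by then accumulated a definite mass, so the ratio is bounded by an explicit $m$-integrable quantity. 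Assembling these bounds gives an $m$-integrable function dominating $\sup_{r>0}|f_r|$, which with the pointwise convergence proves the lemma; the uniform-in-$r$ control of the negative part is the only delicate point.
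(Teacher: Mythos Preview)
Your pointwise convergence argument via Lebesgue differentiation is fine and matches the paper's. However, you have the two halves of the domination exactly reversed.

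For the positive part, your claim that the weak-$(1,1)$ inequality plus Fubini gives $m(\{\sup_{r>0} f_r > t\}) \le Ce^{-t}$ is incorrect. The weak-$(1,1)$ bound for $M_{\nu_\omega}\phi_\omega$ controls $\nu_\omega(\{M_{\nu_\omega}\phi_\omega > e^t\})$, but the conditional law of $x(\omega)$ given the $\sigma$-algebra that makes $\phi_\omega$ measurable (namely given $x'(\sigma^{-1}(\omega)), g_{-1}(\omega), g_0(\omega),\ldots$) is $\phi_\omega\,\nu_\omega = (g_{-1}(\omega))_*\nu_{\sigma^{-1}(\omega)}$, \emph{not} $\nu_\omega$. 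Hence the Fubini step yields
\[
m\bigl(\{\sup\nolimits_r f_r > t\}\bigr) \;=\; \E\Bigl[\int_{\{M_{\nu_\omega}\phi_\omega > e^t\}}\phi_\omega\,d\nu_\omega\Bigr],
\]
and this is not bounded by $Ce^{-t}$ in general: the density $\phi_\omega$ may concentrate precisely where its maximal function is large. The paper handles this side via an $L\log L$ argument: one writes $\int\log(M_\omega\phi_\omega)\,\phi_\omega\,d\nu_\omega$, applies $a\log b \le a\log a + b/e$, and controls $\int M_\omega\phi_\omega\,d\nu_\omega$ by a Stein-type bound $C(1 + \int\phi_\omega\log\phi_\omega\,d\nu_\omega)$; finiteness of $\kappa$ then closes the estimate.

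Conversely, the negative part---which you flag as the main obstacle---is in fact the easy one, and requires neither absolute continuity of $\nu_\omega$ with respect to $(g_{-1})_*\nu_{\sigma^{-1}(\omega)}$ nor any ellipsoid comparison. Rewrite $\{\inf_r f_r < -t\}$ as $\{x(\sigma^{-1}(\omega)) \in g_{-1}(\omega)^{-1}A_t(\omega)\}$, where $A_t(\omega)$ is the set of centers $x$ admitting some radius $r$ with $\nu_{\sigma^{-1}(\omega)}(g_{-1}(\omega)^{-1}B(x,r)) \le e^{-t}\nu_\omega(B(x,r))$. A Besicovitch covering of $A_t(\omega)$ by such balls gives directly $\nu_{\sigma^{-1}(\omega)}(g_{-1}(\omega)^{-1}A_t(\omega)) \le \beta e^{-t}$; since the conditional law of $x(\sigma^{-1}(\omega))$ given the $\sigma$-algebra that determines $A_t(\omega)$ is exactly $\nu_{\sigma^{-1}(\omega)}$, the exponential tail in $m$ follows immediately. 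Your detour through the affine structure and the density of $(\nu_\omega)|_{\{\phi_\omega>0\}}$ is unnecessary, and the ``small $r$'' part of your sketch runs into the same wrong-conditional-measure issue as your positive-part argument.
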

\begin{proof}
  From the Lebesgue differentiation theorem it follows directly that \(\lim\limits_{r \downarrow 0}f_r(\omega) = f(\omega)\) for \(m\)-a.e. \(\omega \in \Omega\).

  Since each \(\X_{T,T'}^{x'}\) is endowed with the distance coming from a norm on \(\R^{d_id_j}\) there is a uniform Besicovitch constant \(\beta\) valid on all fibers.

  For the lower bound we write for  \(t > 0\) 
  \[A_{t}(\omega) = \lbrace x \in \X_{T,T'}^{x'(\omega)}:  \nu_{\sigma^{-1}(\omega)}(g_{-1}(\omega)^{-1}B(x,r)) \le e^{-t}\nu_{\omega}(B(x,r))\text{ for some }r > 0\rbrace,\]
  and notice that \(A_t\) is \(x'(\omega), g_{-1}(\omega),g_{0}(\omega),g_{1}(\omega),\ldots\)-measurable.

  By the Besicovitch covering lemma we may sum over a cover of \(A_t(\omega)\) with at most \(\beta\) balls overlapping at once to obtain  
  \[\nu_{\sigma^{-1}(\omega)}(g_{-1}(\omega)^{-1}A_t(\omega)) \le \beta e^{-t},\]
  for \(m\)-a.e. \(\omega \in \Omega\).

  Since \(\nu_{\sigma^{-1}(\omega)}\) is the conditional distribution of \(x(\sigma^{-1}(\omega))\) conditioned on \(x'(\omega), g_{-1}(\omega),g_{0}(\omega),\ldots\) we obtain
  \[m\left(\lbrace \omega: \inf\limits_{r > 0}f_r(\omega) < -t\rbrace\right) = m\left(\lbrace \omega: x(\sigma^{-1}(\omega)) \in g_{-1}(\omega)^{-1}A_t(\omega)\rbrace\right) \le \beta e^{-t}.\]

  This implies that \(\inf f_r\) is bounded below by an integrable function.

  For the upper bound we consider given \(x'(\omega)\) the maximal function on the fiber \(\X_{T,T'}^{x'(\omega)}\) defined by
  \[M_{\omega}h(x) = \sup\limits_{r > 0} \frac{1}{\nu_\omega(B(x,r))}\int h d\nu_\omega,\]
  and notice that
  \[\sup\limits_{r > 0}f_r(\omega) = \log\left(M_\omega \varphi_\omega (x(\omega))\right),\]
  where
  \[\varphi_\omega = \frac{dg_{-1}(\omega)_*\nu_{\sigma^{-1}(\omega)}}{d\nu_\omega}.\]

  Since \(\varphi_\omega \nu_{\omega}\) is the conditional distribution of \(x(\omega)\) given \(x'(\sigma^{-1}(\omega)),g_{-1}(\omega),g_0(\omega),\ldots\) we obtain integrating first over this \(\sigma\)-algebra
  \begin{equation}\label{conditionalmessequation}
  \int \log\left(M_\omega \varphi_\omega (x(\omega))\right)dm(\omega) = \int\limits_{\Omega}\int\limits_{X_{T,T'}^{x'(\omega)}}\varphi_\omega(x)\log M_\omega\varphi_\omega(x) d\nu_\omega(x) dm(\omega).
\end{equation}

We now use the inequality \(a\log(b) \le a\log(a) + b/e\) to upper bound the right-hand side of equation \ref{conditionalmessequation} by
\[\int\limits_{\Omega}\int\limits_{X_{T,T'}^{x'(\omega)}}\varphi_\omega(x)\log \varphi_\omega(x) d\nu_\omega(x) dm(\omega) + \frac{1}{e}\int\limits_{\Omega}\int\limits_{X_{T,T'}^{x'(\omega)}}M_\omega\varphi_{\omega}(x) d\nu_\omega(x) dm(\omega).\]

Since \(\varphi_\omega \nu_\omega\) is the distribution of \(x(\omega)\) conditioned on \(x'(\omega),g_{-1}(\omega),g_{0}(\omega),\ldots\) the first term above is \(\kappa\).
  
The integral in the second term can be bound by \(C(1 + \int \varphi_\omega \log \varphi_\omega d\nu_\omega)\) where \(C\) depends only on the Besicovitch constant \(\beta\) and can be taken to be \(4\beta \log(2) + 4\beta\).  
This follows from \cite[lemma 9]{lessa}.

Hence, \(\sup\limits_{r > 0}|f_r|\) is \(m\)-integrable, as claimed.
\end{proof}

\begin{lemma}[Approximation of entropy]\label{onestepapproximationlemma}
  Let \(r_n: \Omega \to (0,+\infty)\) be a measurable sequence of positive functions such that 
\[\lim\limits_{n \to +\infty}r_n(\omega) = 0,\]
for \(m\)-a.e. \(\omega \in \Omega\).

Then
\[\kappa = \lim\limits_{n \to +\infty}\frac{1}{n}\sum\limits_{k = 0}^{n-1}f_{r_{n-k}(\sigma^{-k}(\omega))}(\sigma^{-k}(\omega)),\]
for \(m\)-a.e. \(\omega \in \Omega\).
\end{lemma}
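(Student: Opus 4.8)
The plan is to recognize the claimed identity as an instance of Maker's ergodic theorem (the moving-average strengthening of Birkhoff's theorem) for the Bernoulli system \((\Omega,m,\sigma^{-1})\). The two inputs are already available: by Lemma \ref{onestepentropyformulalemma} the limit \(f=\lim_{r\downarrow 0}f_r\) (which exists \(m\)-a.e. by Lemma \ref{makerapproximationlemma}) is \(m\)-integrable with \(\int f\,dm=\kappa\), and by Lemma \ref{makerapproximationlemma} the function \(G:=\sup_{r>0}|f_r|\) is \(m\)-integrable. Set \(F_n(\omega)=f_{r_n(\omega)}(\omega)\). Since each \(r_n\) is measurable and \((\omega,r)\mapsto f_r(\omega)\) is jointly measurable (the measures of the relevant balls being jointly measurable in \(\omega\) and \(r\)), each \(F_n\) is measurable; moreover \(|F_n|\le G\) for all \(n\), and since \(r_n(\omega)\to 0\) for \(m\)-a.e. \(\omega\) we get \(F_n(\omega)\to f(\omega)\) for \(m\)-a.e. \(\omega\). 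In this notation the claim reads
\[\lim_{n\to\infty}\frac1n\sum_{k=0}^{n-1}F_{n-k}(\sigma^{-k}(\omega))=\int f\,dm\quad\text{for $m$-a.e. $\omega$,}\]
which is exactly Maker's theorem applied to the transformation \(\sigma^{-1}\), ergodic for \(m=\mu^\Z\), with the dominated sequence \((F_n)\) converging a.e. to \(f\).

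For completeness I would include the short proof of this instance. Fix \(N\ge 1\) and put \(G_N:=\sup_{m\ge N}|F_m-f|\), so that \(G_N\le 2G\) is \(m\)-integrable and \(G_N\downarrow 0\) \(m\)-a.e., whence \(\int G_N\,dm\to 0\) by dominated convergence. Splitting the average at \(k=n-N\): for \(k\le n-N\) one has \(|F_{n-k}(\sigma^{-k}\omega)-f(\sigma^{-k}\omega)|\le G_N(\sigma^{-k}\omega)\), and the corresponding partial Birkhoff average of \(G_N\) converges to \(\int G_N\,dm\); the at most \(N\) remaining terms are each bounded by \(2G(\sigma^{-k}\omega)\), and their Cesàro contribution tends to \(0\) because \(N\) is fixed while \(n\to\infty\). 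Hence
\[\limsup_{n\to\infty}\left|\frac1n\sum_{k=0}^{n-1}F_{n-k}(\sigma^{-k}\omega)-\frac1n\sum_{k=0}^{n-1}f(\sigma^{-k}\omega)\right|\le\int G_N\,dm\]
for \(m\)-a.e. \(\omega\); letting \(N\to\infty\) and combining with Birkhoff's theorem for \(f\) (which gives \(\frac1n\sum_{k=0}^{n-1}f(\sigma^{-k}\omega)\to\int f\,dm=\kappa\)) yields the statement.

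The only genuinely delicate point — the one Maker's theorem is designed to handle — is the behavior of the terms with \(k\) close to \(n\): there \(n-k\) is small, \(r_{n-k}(\sigma^{-k}\omega)\) need not be small, and there is no control on \(r_m(\omega)\) uniform in \(\omega\), so one cannot simply replace \(F_{n-k}\) by \(f\) throughout the sum. This is circumvented by the integrable domination \(|F_n|\le G\) coming from Lemma \ref{makerapproximationlemma}, which renders the \(O(N)\) boundary terms negligible in a Cesàro average; everything else is a routine combination of Birkhoff's theorem and dominated convergence.
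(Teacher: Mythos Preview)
Your proposal is correct and follows essentially the same approach as the paper: both recognize the claim as an instance of Maker's ergodic theorem for \(\sigma^{-1}\), with the integrable domination \(\sup_{r>0}|f_r|\in L^1\) and the a.e.\ convergence \(f_r\to f\) supplied by Lemma~\ref{makerapproximationlemma}, combined with Birkhoff's theorem and Lemma~\ref{onestepentropyformulalemma} for \(f\). The only difference is that the paper simply invokes Maker's theorem by name, whereas you also include the short standard proof; your added details are correct and the boundary-term argument via \(S_n/n - \tfrac{n-N}{n}\,S_{n-N}/(n-N)\to 0\) is the right way to dispose of the last \(N\) summands.
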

\begin{proof}
  By the ergodic theorem and lemma \ref{onestepentropyformulalemma}
  \[\kappa = \lim\limits_{n \to +\infty}\frac{1}{n}\sum\limits_{k = 0}^{n-1}f\circ \sigma^{-k},\]
  at \(m\) almost every point.

  By lemma \ref{makerapproximationlemma} one has that \(\sup|f_{r_n}|\) is \(m\)-integrable and \(f_{r_n}\) converges to \(f\) almost everywhere.

  In this situation the Maker ergodic theorem allows one to substitute \(f\circ \sigma^{-k}\) for  \(f_{r_{n-k}\circ \sigma^{-k}}\circ \sigma^{-k}\) in the Birkhoff averages without altering the limit. 
\end{proof}

\subsection{Proof of the lower bound}

We will now prove the lower bound of lemma \ref{mainonesteplemma}.

For this purpose we set
\[r_n(\omega) = s_{d_id_j}(A_{-1}^0(\omega))\cdots s_{d_id_j}(A_{-n}^{-(n-1)}(\omega)).\]

By the ergodic theorem we have
\[\lim\limits_{n \to +\infty}\frac{1}{n}\log r_n(\omega) = I_{d_id_j},\]
for \(m\)-a.e. \(\omega \in \Omega\).

Since \(s_{d_id_j}(A_{-k}^{-k+1}(\omega))\) is the smallest singular value of \(A_{-k}^{-k+1}(\omega)\) we obtain
\[g_{-(k+1)}(\omega)^{-1}B_{\sigma^{-k}(\omega)}(x(\sigma^{-k}(\omega)), r_{n-k}(\sigma^{-k}(\omega))) \subset B_{\sigma^{-(k+1)}(\omega)}(x(\sigma^{-(k+1)}(\omega),r_{n-(k+1)}(\sigma^{-(k+1)}(\omega))),\]
for \(m\)-a.e. \(\omega \in \Omega\) and \(k = 0,\ldots, n-1\).

Applying this property and lemma \ref{onestepapproximationlemma} we obtain
\begin{align*}
  \kappa &= \lim\limits_{n \to +\infty}\frac{1}{n}\sum\limits_{k = 0}^{n-1}f_{r_{n-k}(\sigma^{-k}(\omega))}(\sigma^{-k}(\omega))
      \\ &\le \liminf\limits_{n \to +\infty}-\frac{1}{n}\log\left(\nu_{\omega}(B_\omega(x(\omega),r_n(\omega))\right).
\end{align*}

It follows that
\[\frac{\kappa}{I_{d_id_j}} \le \liminf\limits_{n \to +\infty}\frac{\log \nu_{\omega}(B_\omega(x(\omega),r_n(\omega))}{\log r_n(\omega)}\]
for \(m\)-a.e. \(\omega \in \Omega\), which proves the claim.

\subsection{Proof of the upper bound}

We will now prove the upper bound of lemma \ref{mainonesteplemma}.

For this purpose we set
\[r_n(\omega) = s_{1}(A_{-1}^0(\omega))\cdots s_{1}(A_{-n}^{-(n-1)}(\omega)).\]

By the ergodic theorem we have
\[\lim\limits_{n \to +\infty}\frac{1}{n}\log r_n(\omega) = I_{1},\]
for \(m\)-a.e. \(\omega \in \Omega\).

Since \(s_{1}(A_{-k}^{-k+1}(\omega))\) is the largest singular value of \(A_{-k}^{-k+1}(\omega)\) we obtain
\[g_{-(k+1)}(\omega)^{-1}B_{\sigma^{-k}(\omega)}(x(\sigma^{-k}(\omega)), r_{n-k}(\sigma^{-k}(\omega))) \supset B_{\sigma^{-(k+1)}(\omega)}(x(\sigma^{-(k+1)}(\omega),r_{n-(k+1)}(\sigma^{-(k+1)}(\omega))),\]
for \(m\)-a.e. \(\omega \in \Omega\) and \(k = 0,\ldots, n-1\).

Applying this property and lemma \ref{onestepapproximationlemma} we obtain
\begin{align*}
  \kappa &= \lim\limits_{n \to +\infty}\frac{1}{n}\sum\limits_{k = 0}^{n-1}f_{r_{n-k}(\sigma^{-k}(\omega))}(\sigma^{-k}(\omega))
      \\ &\ge \limsup\limits_{n \to +\infty}\frac{1}{n}\log\left(\frac{\nu_{\sigma^{-n}(\omega)}(B_{\sigma^{-n}(\omega)}(x(\sigma^{-n}(\omega)),1)}{\nu_{\omega}(B_\omega(x(\omega),r_n(\omega))}\right).
\end{align*}

Here, we cannot ignore the numerator \(\nu_{\sigma^{-n}(\omega)}\left(B_{\sigma^{-n}(\omega)}(X(\sigma^{-n}(\omega)),1))\right)\) which might be arbitrarily small.

However, since \(\nu_{\omega}\) is the conditional distribution of \(x(\omega)\) given \(x'(\omega),g_0(\omega),g_1(\omega),\ldots\) we may apply the Lebesgue density theorem to conclude that there exist \(c,p > 0\) such that
\[m\left(\lbrace \omega \in \Omega: \nu_\omega\left(B_\omega(x(\omega),1))\right) > c\rbrace \right) = p > 0.\]

By the ergodic theorem \(\sigma^{-n}(\omega)\) belongs to this set for \(n\) in a subsequence \(n_k(\omega)\) with density \(p\).  So we may conclude that
\[\kappa \ge \limsup\limits_{k \to +\infty}-\frac{1}{n_k(\omega)}\log\left(\nu_\omega\left(B_\omega(x(\omega),r_{n_k(\omega)}(\omega))\right)\right).\]

Since
\[\lim\limits_{k \to +\infty}\frac{\log(r_{n_k(\omega)}(\omega))}{k} = \lim\limits_{k \to +\infty}\frac{n_k(\omega)}{k}\frac{\log(r_{n_k(\omega)}(\omega))}{n_k(\omega)} = -\frac{I_1}{p} < 0,\]
it suffices to calculate the local dimension of \(\nu_\omega\) at \(x(\omega)\) along this subsequence (see \cite[remark following proposition 2.1]{young1982}).

Hence we obtain
\[\frac{\kappa}{I_{1}} \ge \lim\limits_{n \to +\infty}\frac{\log \nu_{\omega}(B_\omega(x(\omega),r_{n_k(\omega)}(\omega))}{\log r_{n_k(\omega)}(\omega)}\]
for \(m\)-a.e. \(\omega \in \Omega\).

Since this holds for \(m\)-a.e. \(\omega \in \Omega\) we obtain
\[\overline{\dim}(\nu_\omega) \le \frac{\kappa}{I_1},\]
which concludes the proof.

\section{Additivity along monotone paths}\label{final}
We assume in this section that \(\mu\) has finite first moment and countable support.

We fix a monotone path \( T^k = T \onefiner T^{k-1} \onefiner \cdots \onefiner T^0 = T'\).

\subsection{Proof of theorem \ref{additivitytheorem}}

For each \(t, t=0 \ldots , k, \) \( \nu _{T^t}\)-a.e. \(x^t \in \X_{T^t}\), the following functions are \(\nu _{T, T^t}^{x^t} \)-a.e. constant on \( \pi _{T, T^t}^{-1} (x^t) :\)
\[ \un \delta ^t (y):= \liminf _{r\to 0} \frac {\log \nu _{T, T^t}^{x^t}( B (y ,r))}{\log r}, \;\; \ov \delta^t(y) := \limsup_{r\to 0} \frac {\log \nu _{T, T^t}^{x^t} ( B (y',r))}{\log r}. \]
Moreover, the  \(\nu _{T, T^t}^{x^t} \)-a.e. constant respective  values \(\un \delta^t (x^t)\) and \(\ov \delta ^t (x^t)\) are \( \nu _{T^t}\)-a.e. constant and we denote \(\un \delta^t \) and \(\ov \delta ^t\) these respective values.
With this notation,  lower and upper dimensions of the measure \( \nu _{T,T'}^{x^{t'}} \) are respectively \(\un \delta^0 \) and \(\ov \delta ^0.\)

Theorem \ref{additivitytheorem} follows by summing the following relations (and observing that \(\un \delta^k =0  \) and \(\ov \delta ^k=0\)) for \( t = 1,\ldots , k\)
\begin{eqnarray}\label{induction1}  \un \delta^{t-1} &\geq & \un \delta^{t} +  \frac{\kappa_{T^{t}, T^{t-1}}}{\chi_{T^{t}, T^{t-1}}}\\
\label{induction2} \ov \delta^{t-1}& \leq& \ov \delta^{t} +  \frac{\kappa_{T^{t}, T^{t-1}}}{\chi_{T^{t}, T^{t-1}}}. \end{eqnarray}

The proof are similar to the proofs in \cite[section 8]{ll1}. Relation (\ref{induction1}) follows from  theorem \ref{onesteptheorem} and \cite{ledrappier-young}, Lemma 11.3.1. Given theorem \ref{onesteptheorem} and that the sequence \( \chi_{T^t, T^{t-1}}\) is nonincreasing,  the proof of (\ref{induction2}) is completely parallel to the proof of Theorem 2.6 in \cite[section 8]{ll1}, with lemma \ref{approximationlemma} replacing lemma 8.1.2.

\subsection{Proof of theorem \ref{exactdimensiontheorem}}

In order to prove theorem \ref{exactdimensiontheorem}, it remains to connect the measures \( \nu _L\) of theorem \ref{exactdimensiontheorem}. with the above conditional measures. 

Given a left filtration \(L\), we apply lemma \ref{monotone} to the pair \(T_L \prec T_0\) of admissible topologies, where \(T_0\) is the coarsest admissible topology and \(T_L \) is generated by \( T_0\) and \(L\). We obtain a monotone path 
 \[ T^k = T_L \onefiner T^{k-1} \onefiner \cdots \onefiner T^0 = T_0\]
 and the corresponding ordered of differences of exponents 
 \[\chi_{T^1,T^0} \le \chi_{T^2,T^1} \le \cdots \le \chi_{T^{k},T^{k-1}}.\]
For \( \nu _{T_0} \) -a.e. \( y\in \F\), the set \( \F_L^{y} \) has full \( \nu _L\) measure and is identified with \( \pi _{T_L,T_0}^{-1} (y)\) by a bilipschitz mapping  (see Lemma \ref{filteredvsflaglemma}). The projections \( \pi_{T, T^t} :   \pi _{T_L,T_0}^{-1} (y) \to  \pi _{T^t,T_0}^{-1} (y)\) define  a system of finer and finer projections \( \pi _t, t = 0, 1, \ldots, k \) on \( \F_L^y \). The fibers between two successive projections carry the  disintegrations of the  images of \( \nu _L .\) By  theorem  \ref{onesteptheorem}, these disintegrations are almost everywhere exact-dimensional with dimension \( \gamma _t \) given by, for \( t = 1, \ldots ,k\), 
\[ \gamma _t \, := \, \frac{ I (g_{-1}, E_{T^t} | E_{T^{t-1}}) }{\chi _{T^t, T^{t-1}}} .\]
Therefore, by theorem \ref{additivitytheorem}, the dimension of \( \nu _L\) is given by \[\dim \nu _L \; =\; \sum _{t=1}^k \gamma _t ,\]
 The geometric picture of the fibrations \( \pi _t ^{-1} \) depends on \(y \in \F\), but not the geometric numbers \( \gamma _t.\)

\end{document}